\theoremstyle{plain}
\newtheorem{axiom}{Axiom}
\newtheorem{remark}{Remark}
\newtheorem{claim}[axiom]{Claim}
\newtheorem{theorem}{Theorem}[section]
\newtheorem{lemma}[theorem]{Lemma}
\newtheorem{proposition}[theorem]{Proposition}
\newtheorem{corollary}[theorem]{Corollary}
\theoremstyle{remark}
\newtheorem{definition}[theorem]{Definition}
\newtheorem*{example}{Example}
\newtheorem*{fact}{Fact}
\newcommand\myeq{\stackrel{\mathclap{\normalfont\mbox{\itshape{d}}}}{=}}
\begin{document}

\begin{frontmatter}
\title{Coupling from the past\\ for the null recurrent Markov chain}
\runtitle{CFP for the null recurrent Markov chain}

\begin{aug}
\author[A]{\fnms{Fran\c{c}ois} \snm{Baccelli}\ead[label=e1]{francois.baccelli@ens.fr}},
\author[B]{\fnms{Mir-Omid} \snm{Haji-Mirsadeghi}\ead[label=e2]{mirsadeghi@sharif.ir}}
\and
\author[C]{\fnms{Sayeh} \snm{Khaniha}\ead[label=e3]{sayeh.khaniha@ens.fr}}
%%%%%%%%%%%%%%%%%%%%%%%%%%%%%%%%%%%%%%%%%%%%%%
%% Addresses                                %%
%%%%%%%%%%%%%%%%%%%%%%%%%%%%%%%%%%%%%%%%%%%%%%
\address[A]{INRIA/ENS Paris,
\printead{e1}}

\address[B]{Sharif University of Technology,
\printead{e2}}

\address[C]{INRIA/ENS Paris,
\printead{e3}}

\end{aug}

\begin{abstract}
The Doeblin Graph of a countable state space Markov chain describes the joint pathwise evolutions of the Markov dynamics starting from all possible initial conditions, with two paths coalescing when they reach the same point of the state space at the same time. Its Bridge Doeblin subgraph only contains the paths starting from a tagged point of the state space at all possible times. In the irreducible, aperiodic, and positive recurrent case, the following results are known: the Bridge Doeblin Graph is an infinite tree that is unimodularizable. Moreover, it contains a single bi-infinite path which allows one to build a perfect sample of the stationary state of the Markov chain. The present paper is focused on the null recurrent case. It is shown that when assuming irreducibility and aperiodicity again, the Bridge Doeblin Graph is either an infinite tree or a forest made of a countable collection of infinite trees. In the first case, the infinite tree in question has a single end, is not unimodularizable in general, but is always locally unimodular. These key properties are used to study the stationary regime of several measure-valued random dynamics on this Bridge Doeblin Tree. The most important ones are the taboo random dynamics, which admits as steady state a random measure with mean measure equal to the invariant measure of the Markov chain, and the potential random dynamics which is a random extension of the classical potential measure, with a mean measure equal to infinity at every point of the state space. The practical interest of these two random measures is discussed in the context of perfect sampling.
\end{abstract}

\iffalse
\begin{keyword}[class=MSC]
\kwd[Primary ]{00X00}
\kwd{00X00}
\kwd[; secondary ]{00X00}
\end{keyword}

\begin{keyword}
\kwd{First keyword}
\kwd{second keyword}
\end{keyword}
\fi

\end{frontmatter}
%%%%%%%%%%%%%%%%%%%%%%%%%%%%%%%%%%%%%%%%%%%%%%
%% Please use \tableofcontents for articles %%
%% with 50 pages and more                   %%
%%%%%%%%%%%%%%%%%%%%%%%%%%%%%%%%%%%%%%%%%%%%%%
%\tableofcontents

\noindent \textbf{Keywords}:
Discrete time discrete space Markov chain; potential measure; taboo measure; invariant measure; perfect simulation; measure-valued Markov chain; dynamical system; recurrence; foliation; Doeblin coupling; coalescing random processes; random graph; unimodular random tree; one ended random tree; eternal family tree; renewal process; point process.
\\\\
%\noindent \textbf{2010 Math. Subject Class.} 26A24, 05A30, 41A58\\
%\noindent \textbf{Personal website:} \ \href{https://kolosovpetro.github.io/}{kolosovpetro.github.io}\\
%\noindent \textbf{ORCID:} \ \href{https://orcid.org/0000-0002-6544-8880}{0000-0002-6544-8880}\\
%\noindent \textbf{e-mail:} \ \ \ \href{mailto:kolosovp94@gmail.com}{kolosovp94@gmail.com}\\

%\tableofcontents
\section{Introduction}
Let $\{{X}_{t}\}_{t \in \mathbb{N}}$ be a Markov Chain with countable state space, $\mathcal{S}$. It is well known that when $\{{X}_{t}\}_{t \in \mathbb{N}}$ is  irreducible, aperiodic, and positive recurrent, it has a unique stationary distribution. On the other hand, when it is null recurrent, it admits no stationary probability distribution, but a unique stationary measure $\sigma$, i.e., the measure $\sigma$ satisfies $\sigma=\sigma P$ with $\sigma(s^{*})=1$, where $P$ is the transition probability matrix the Markov Chain, and $s^{*}$ is an arbitrary fixed point in $\mathcal{S}$.\\
One can consider a Markov Chain as a  dynamics on $\mathcal{S}$-valued random variables. This dynamics can be written as the following equation \footnote{Another representation for Equation \eqref{002} is the stochastic recurrence equation ${X}_{t+1}=h({X}_{t}, \xi_{t})$.}
\begin{equation}
\label{002}
X_{t+1}=\sum _{x \in \mathcal{S}} \mathbbm{1}_{ \{X_{t}=x\}} h(x,\xi_{t}^{x}), 
\end{equation}
where $\{\xi_{t}^{x}, x\in \mathcal{S}\}_{t\in \mathbb{Z}}$ is the source of randomness which can be assumed  i.i.d. for different $t \in \mathbb{Z}$ and such that $P[h(x,\xi_{t}^{x}=y)]=p_{xy}$. Here $h$ is an update rule which allows one to construct the random variable at time $t+1$ from that at time $t$. When $\{{X}_{t}\}_{t \in \mathbb{Z}}$ is positive recurrent, Equation \eqref{002} has a stationary solution. This means there is a random variable $X$, with distribution $\sigma$, and such that $X\overset{d}{=}h(X, \xi_{t}^{X})$, with $\overset{d}{=}$ meaning equality in distribution. In the null recurrent case, this dynamic does not admit such a stationary solution. This paper introduces two other dynamics related to the Markov Chain $\{{X}_{t}\}_{t \in \mathbb{Z}}$, that have a stationary solution in the recurrent case, including the null recurrent one.These dynamics are on random measures on $\mathcal{S}$ and are of the form 
\begin{equation}
\label{003}
{M}_{t+1}=H({M}_{t}, \xi_{t}),
\end{equation}
where for each $t$, $M_{t}$ is a random measure on $\mathcal{S}$ and $\xi_{t}=\{\xi_{t}^{x}\}_{x\in \mathcal{S} }$ is the same as in Equation \eqref{002}. Two different update rules for $H$, referred to as the Taboo Dynamics and Potential Dynamics are introduced. See Section \ref{Main} for their definitions. These two dynamics are related to the Doeblin coupling of the Markov Chain $\{{X}_{t}\}_{t \in \mathbb{Z}}$. They leverage the Doeblin Graph and a subgraph of it, the Bridge Graph, of the Markov Chain. The Doeblin Graph of  $\{X_{t}\}$ is a random graph with vertices in $\mathbb{Z} \times \mathcal{S}$. In this graph, the $x$-axis, which is referred to as the \emph{time axis}, represents time, and the $y$-axis represents the state space. The edges of the Doeblin Graph are defined using the transition probabilities of the Markov Chain: there is an edge from each vertex $(t,x)$ to vertex $(t+1,h(x,\xi_{t}^{x}))$, with $\{\ \xi_{t}^{x}\}_{x,t}$ as defined above. The Bridge Graph with respect to $s^{*}$ is the union of all paths of the Doeblin Graph starting from $ \mathbb{Z} \times  \{s^{*}\} $, where $s^{*}$ is an arbitrary fixed point in $\mathcal{S}$. \\
It is shown in \cite{Baccelli2018} that the Bridge Graph of an aperiodic, positive recurrent Markov Chain is a.s. a tree, which is locally finite and unimodularizable. Using the unimodular property, it is shown that in this case, there is a unique bi-infinite path in the this graph. The distribution of the points in this bi-infinite path is the stationary distribution.\\ 
The first aim of the present work is the study of the properties of the Bridge Graph constructed from an aperiodic and {\em null} recurrent Markov Chain. In this case, one can show that the Bridge Graph
is not unimodularizable in general, that it can be both a tree or a forest, and that it contains no bi-infinite path when it is connected or when it satisfies some additional condition given in the paper. It is also shown that the Bridge Graph is locally unimodular in that it contains a unimodular and one ended random tree, the regeneration time tree. This allows one to show that the Bridge Graph is one ended as well, which is essential for the following analysis.\\
In the recurrent case, two measure-value dynamics are defined on the Bridge Graph, the Taboo and the Potential dynamics. First, it is shown that the Taboo dynamics has a stationary state on the space of random measures on $\mathcal{S}$, called the Taboo Point Process (TPP). A key point is that the mean measure of the TPP is equal to the invariant measure of the Markov Chain.\\
The Potential Dynamics is also studied. In the null recurrent case, this dynamics also has a stationary state on the space of random measures on $\mathcal{S}$. This random measure is called the Potential Point Process (Potential PP). This point process is locally finite, but its mean measure is infinite. \\
These two point processes can also be defined in the positive recurrent case as well, and their properties are also discussed in this case.
They are hence fundamental objects in that they can be defined for all recurrent discrete time discrete space Markov Chains, they are left invariant by the Markov dynamics, and they provide, as it will be shown, key informations
on the CFTP algorithm as well as complementary informations on the two most important deterministic measures of Markov Chain theory, namely the invariant measure and the potential measure.\\

After studying the existence of stationary regimes for these dynamics, their uniqueness is also discussed. For this purpose, these dynamics are considered as Markov Chains on the space of locally finite counting measures on $\mathcal{S}$, $\mathcal{N}(\mathcal{S})$. It is shown that the dynamical systems introduced above may have other stationary solutions than the one built on the Bridge Graph of the MC.\\
The paper is structured as follows: Section \ref{Main} contains the main definitions  and results, whereas the other sections gather the proofs. All the results are first established in a simple example, called the \emph{Renewal Markov Chain}. They are then extended to the general case.\\ Section \ref{Renewal} introduces this simple example of null recurrent Markov Chain and the \textit{Renewal Bridge Graph} constructed from this MC. \\ Section \ref{S2} constructs a random graph on $\mathbb{Z}$, called the \emph{Renewal Eternal Family Tree (Forest)}. It will be shown that this random graph can be connected and form a tree or disconnected and form a forest. For the definition of Eternal Family Trees, see \cite{Baccelli2017}. Then other properties of the Renewal EFT, such as its unimodularity, are proved. \\ In Section \ref{S4}, a coupling between the Renewal Bridge Graph and the Renewal EFT is defined. This coupling helps one studying the properties of the Renewal Bridge Graph using the properties of the Renewal EFT.\\
%One of the important properties of the Bridge Graph  in the positive recurrent case is the existence of a unique bi-recurrent path as mentioned above. The intersection of this path with time zero is the perfect sample in this case. We will see that this type of path does not exist in the null recurrent Renewal Bridge Graph. \\ 
Section  \ref{S5} considers the general null recurrent Markov Chain. It is shown that the structural properties established for the simple example hold for the general Bridge Graphs.\\ Section \ref{Null} considers the Taboo and Potential Dynamics on the Bridge Graph and studies their properties. The Taboo PP, is being introduced, which strongly relates to the unique invariant measure of the null recurrent Markov Chain (Theorem \ref{T1}). The constructibility of the Taboo and Potential PPs is also discussed. It is shown that these random measures are locally finitely constructible in the sense that the mass of the measure at each point only depends on an a.s. finite subgraph of the Bridge Graph. Nevertheless, it does not mean that it is always algorithmically constructible, and that one can find this finite subgraph effectively. This algorithmic constructibility holds nonetheless in the case where $\{{X}_{t}\}_{t \in \mathbb{N}}$ is \emph{monotone.} It means that in this case, one can perfectly sample from the Taboo and Potential PPs. A concrete example pertaining to queuing theory is also discussed in detail in Section \ref{sec:loy}. The $GI/GI/1$ queue 
allows one to illustrate the meaning and the practical interest of these two point processes. \\ 
Section \ref{Possitive}, considers the properties of the two point processes when the MC is positive recurrent. For the Taboo PP, the connection between perfect sampling in the CFTP sense, and the one obtained using the definition of the TPP is discussed. The properties of the Potential PP in the positive recurrent case are also considered. This section also gives some results about the properties of these two dynamics in more general state spaces. Instead of considering these dynamics on the Bridge Graph, they are regarded as Markov Chains on the space of the random measures on $S$. %Finally, Section \ref{Open} contains questions that are still open and some conjectures. 
\section{Main Definitions and Results}
\label{Main}
Consider a Markov Chain $X= \{X_{t}\}_{t\in\mathbb{N}}$ defined on a probability space $(\Omega, \mathcal{F}, \mathbb{P})$ with a countable state space $\mathcal{S}$ and transition probabilities $P=(p_{x,y})_{x,y \in S}$. As mentioned in the introduction, two different dynamics are considered on the random counting measures or point processes with multiplicity on $\mathcal{S}$, satisfying \eqref{003}.\\
%The first dynamics is called the \textbf{Natural Dynamics}, denoted by $H^{N}$. It is defined by ${M}_{t+1}^{N}=H^{N}({M}_{t}, \xi_{t})$, with, for each $x , y \in \mathcal{S}$,
%\begin{equation}
%\label{0003}
%{M}_{t+1}^{N}(y)=\sum _{x \in \mathcal{S}}{M}_{t}^{N}(x) \mathbbm{1}_{\{ h(x,\xi_{t}^{x})=y\}},
%\end{equation}
%where ${M}_{t+1}^{N}(y)$ is the mass that ${M}_{t+1}^{N}$ puts at state $y$. This dynamics constructs the random measure at time $t+1$, from the random measure at time $t$, by sending the mass at each state $x$ to state $y$ with the rule $h$ and adding up the masses sent to the same state $y$. The update rule $h$ is a particular case of $H^{N}$ when the initial random measure is a  $\delta$ measure. This is why this dynamics is called "Natural." \\
The first dynamics is the \textbf{Taboo Dynamics}, denoted by $H^{T}$ which is defined with respect to a reference point $s^{*} \in \mathcal{S}$. 
It is defined by ${M}_{t+1}^{T}=H^{T}({M}_{t}, \xi_{t})$, with, for each $x , y \in \mathcal{S}$,
\begin{equation}
\label{004}
{M}_{t+1}^{T}(y)=\begin{cases}
    \sum _{x \in \mathcal{S}}{M}_{t}^{T}(x) \mathbbm{1}_{\{ h(x,\xi_{t}^{x})=y\}}, & \quad y \ne s^{*}\\
    1, & \quad y=s^{*}.
  \end{cases}
\end{equation}
This dynamics constructs the random measure at time $t+1$ from the random measure at time $t$. It sends some mass from each state $x$ to state $y$ with rule $h$ while adding up the masses sent to the same state $y$. It ignores all the masses that enter $s^{*}$ at time $t+1$ and puts mass $1$ at this point.\\
The second dynamics is the \textbf{Potential Dynamics}. It is denoted by $H^{P}$. One has ${M}_{t+1}^{P}=H^{P}({M}_{t}, \xi_{t})$, with, for each $x , y \in \mathcal{S}$,   
\begin{equation}
{M}_{t+1}^{P}(y)=\begin{cases}
    \sum _{x \in \mathcal{S}}{M}_{t}^{P}(x) \mathbbm{1}_{\{ h(x,\xi_{t}^{x})=y\}}, & \quad  y \ne s^{*}\\
 \sum _{x \in \mathcal{S}}{M}_{t}^{P}(x) \mathbbm{1}_{\{ h(x,\xi_{t}^{x})=y\}} + 1 & \quad  y = s^{*}.
  \end{cases}
\end{equation}
As the Taboo Dynamics, there is a reference point $s^{*}\in \mathcal{S}$. For constructing the random measure at time $t+1$ from the random measure at time $t$, the Potential Dynamics sends some mass from each state $x$ to state $y$ with rule $h$ while adding up the masses sent to the same state $y$; in addition it adds mass one at point $s^{*}$. \\
The difference between the Taboo Dynamics and the Potential Dynamics is that the former always puts mass one at $s^{*}$ and does so by deleting the masses arriving at this point and adds mass one at $s^{*}$. In contrast, the Potential Dynamics just adds mass one at $s^{*}$. \\
The update rules of these two dynamics are related to the Doeblin coupling of the MC. Therefore, the main tool that will be leveraged to study these dynamics is the Doeblin Graph and its subgraph, the Bridge Graph. \\
As already mentioned in the introduction, the \textbf{Doeblin Graph} is a random graph constructed from $\{\xi_{x}^{t}\}$. The vertices of the Doeblin Graph are $\Sigma= \mathbb{Z} \times \mathcal{S}$. The first component of the vertices is considered as time, and hence the horizontal axis will be referred to as the \textbf{time axis}. The second component corresponds to the state of the vertices, and hence the vertical axis will be referred to as the \textbf{state axis}. There is an identically distributed and independent source of randomness $\{\xi_{t}^{x}, x\in \mathcal{S}\}_{t\in \mathbb{Z}}$, with $\xi_{t}^{x} \in \Xi=[0,1]$ such that $\{\xi_{t}^{x}\}$ is independent of the initial distribution of $\{X_{t}\}_{t\in\mathbb{N}}$. The function $h:\mathcal{S} \times \Xi \to \mathcal{S}$ defines the transitions between the states of $\mathcal{S}$. In addition, $h$ satisfies $P(h(x,\xi_{t}^{x})=y)=p_{x,y}$ for all $x,y \in \mathcal{S}$. The edges of the Doeblin Graph, $D$, are directed edges which are defined from a vertex $(x,t)$ at time $t$, to a vertex at time $t+1$, through the random map
\begin{equation}
(t,x) 	\mapsto (t+1,h(x,\xi_{t}^{x})).
\end{equation}
Consider the subgraph of the Doeblin Graph of $X$ that contains those vertices which are in the union of the trajectories starting from all $(t,s^{*})$, $t \in \mathbb{Z}$. This gives a subgraph of the Doeblin Graph called the \textbf{(Doeblin) Bridge Graph, $B_{X}$}. Here $s^{*}$ is a fixed arbitrary state in $\mathcal{S}$. 
The properties of the positive recurrent Bridge Graph were studied in \cite{Baccelli2018}. The most important properties are the fact that this graph is unimodularizable and the existence of a unique bi-recurrent path $\{ \beta _{t} \}_{t \in \mathbb{Z}}$. The bi-recurrent path is a path such that the number of times that it meets each state $x$ in $\mathcal{S}$, in both positive and negative times, is infinite a.s. Based on this definition, when a random path is bi-recurrent, it is bi-infinite. The existence of a bi-recurrent path is established from the unimodularizability of the positive recurrent Bridge Graph in the sense of \cite{Aldous}. The importance of the bi-recurrent path is that the vertices belonging to this path are distributed as the stationary distribution of the MC from which the Bridge Graph was constructed. So each vertex in this path can be considered as a perfect sample of the stationary distribution of the MC. \\
\begin{remark}
\label{RT}
In the definition of the Doeblin Graph and the dynamics defined in \eqref{003}, the same source of randomness $\{\xi_{t}^{x}, x\in \mathcal{S}\}_{t\in \mathbb{Z}}$ is used. This sequence is always considered to be independent in $t$ but not necessarily in $x$. i.e., at each time $t$, the random variables $\{\xi_{t}^{x} \}$ can be coupled. When the random variables  $\{\xi_{t}^{x} \}$ are independent in both $t$ and $x$, this sequence will be called \emph{totally independent}.
\end{remark}
\begin{example}
\label{MCLRW}
Consider the \emph{lazy random walk}, W, defined on the state space $\mathcal{S}=\mathbb{Z}$, with the following transition probabilities: the walk stays at the current state, $i$, with probability $1/3$, and moves to each neighbor of $i$, i.e.,  $i+1$ or $i-1$, at random with probability $1/3$. Then, one can consider the Doeblin Graph of $W$, constructed from the driving sequence $\{\xi_{t}^{y}, y\in \mathcal{ \mathbb{Z}}\}_{t\in \mathbb{Z}}$, and the transition function $h$, where $\{\xi_{t}^{y}, y\in \mathcal{ \mathbb{Z}}\}$ are maximally coupled for a given $t$, i.e., at each time $t$, for all $i$, $\xi_{t}^{i}=\xi_{t}^{0}$, and for all $i$ the transition $h(i,\xi_{t}^{i})=h(i,\xi_{t}^{0})=i+h(0,\xi_{t}^{0})$. In this example, one can show that the Bridge Graph of this lazy random walk coincides with its Doeblin Graph (see Figure \ref{MCLRWF}).
\end{example}
\begin{figure}[h]
\includegraphics[width=\textwidth]{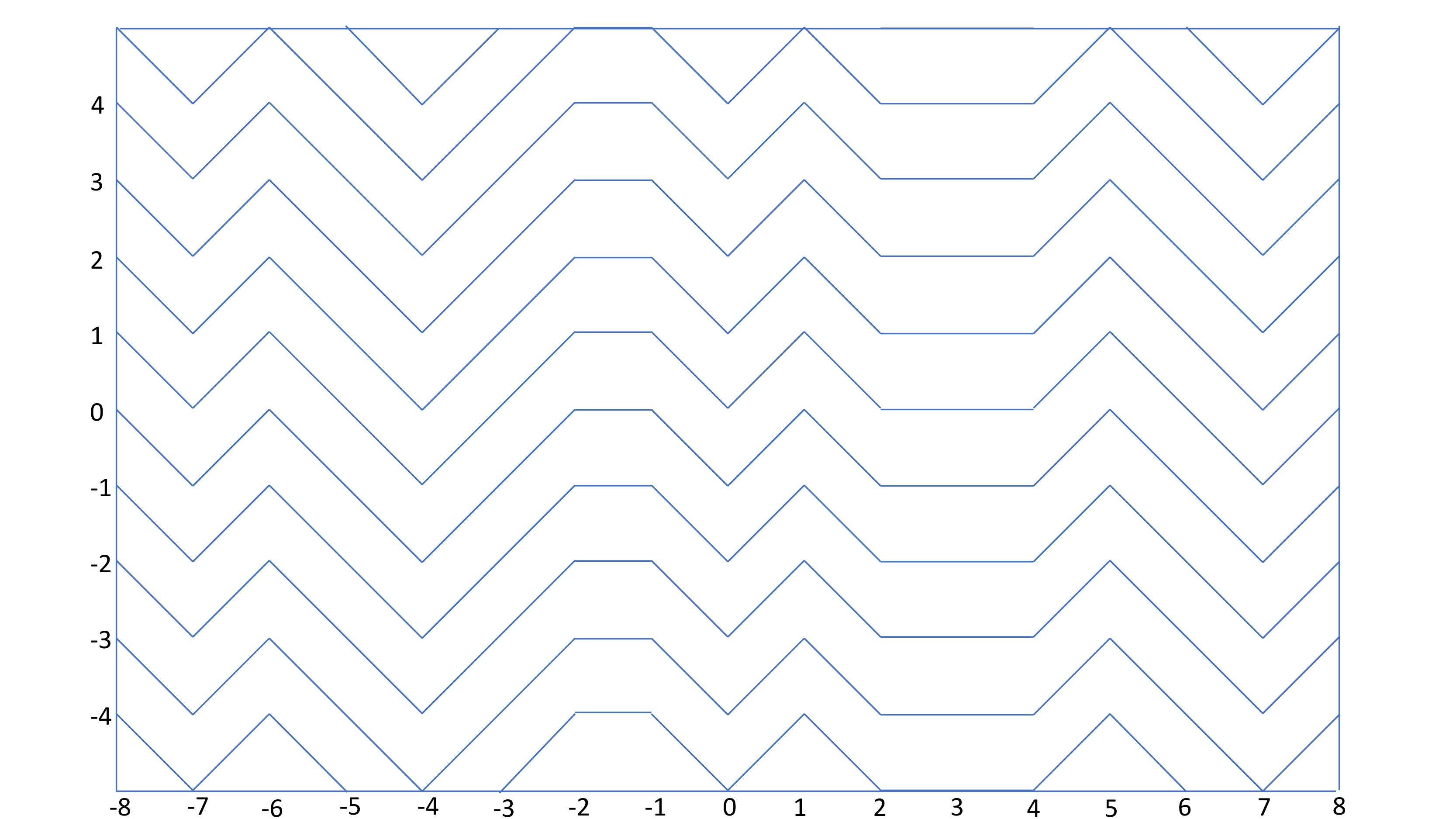}
\caption{The maximally coupled Doeblin Graph and Bridge Graph of lazy random walk}
\label{MCLRWF}
\centering
\end{figure}
In this paper, it will be shown that the null recurrent Bridge Graph has the following properties: 
\begin{proposition}
\label{Pr13}
The null recurrent Bridge Graph is either a tree or a forest, and both cases can happen.
\end{proposition}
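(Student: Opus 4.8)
The plan is to split the statement into two independent parts: (a) every connected component of $B_X$ is a tree, so $B_X$ is always a forest and is a tree precisely when it is connected; and (b) exhibiting one null recurrent chain whose Bridge Graph is a.s. connected and one whose Bridge Graph is a.s. disconnected.

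For part (a), the structural fact to exploit is that in the Doeblin Graph $D$ every vertex $(t,x)$ has out-degree exactly one, namely the edge towards $(t+1,h(x,\xi_t^x))$, and that this out-degree-one property is inherited by $B_X$ (the Doeblin-successor of a bridge vertex lies again on a bridge trajectory, and the edge set of $B_X$ consists exactly of these successor edges). View $B_X$ as an undirected graph and suppose, for contradiction, that it contains a simple cycle $v_0 - v_1 - \cdots - v_{k-1} - v_0$. Every edge of $D$ joins a vertex at time $t$ to one at time $t+1$, so along the cycle the time coordinate $t(v_i)$ changes by exactly $\pm 1$ at each step; in particular $k\ge 3$, since a double edge between two vertices is impossible (between any ordered pair there is at most one Doeblin edge, and the two orientations are incompatible) and a self-loop is excluded. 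The function $i\mapsto t(v_i)$ on the cyclic index set attains a minimum at some $i_0$; as consecutive values differ by exactly $1$, both neighbours $v_{i_0-1}$ and $v_{i_0+1}$ must sit at time $t(v_{i_0})+1$, so both cycle-edges at $v_{i_0}$ are out-edges of $v_{i_0}$ in $D$. Uniqueness of the successor forces $v_{i_0-1}=v_{i_0+1}$, contradicting $k\ge 3$. Hence $B_X$ has no cycle and is a forest.

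For part (b), the forest case is already realised by the maximally coupled lazy random walk of Example \ref{MCLRW}, for which $B_X$ coincides with the Doeblin Graph. There every walker is translated by the common increment $\delta_t:=h(0,\xi_t^0)\in\{-1,0,1\}$, so the quantity $x-\sum_{\ell<t}\delta_\ell$ is constant along trajectories and along components; its level sets are the components, each of them a single bi-infinite path, and since the driving walk $\sum_{\ell<t}\delta_\ell$ is recurrent every level set is nonempty and meets $\mathbb{Z}\times\{s^*\}$. Thus $B_X$ has countably many components and is not a tree. For the tree case one keeps the lazy random walk but drives it with a totally independent family $\{\xi_t^x\}$ in the sense of Remark \ref{RT}: as long as two bridge trajectories occupy distinct states their difference is a mean-zero, bounded integer-valued random walk, hence recurrent, hence a.s. reaches $0$, at which instant the two trajectories share a vertex of $D$ and therefore coalesce. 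Since there are only countably many bridge trajectories, a.s. every pair coalesces, so a.s. they all lie in one component; by part (a), $B_X$ is then a tree. Both behaviours also occur within the Renewal Markov Chain of Section \ref{Renewal}, through the analysis of the Renewal Eternal Family Tree in Section \ref{S2} and the coupling of Section \ref{S4}.

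The argument in (a) is routine and is not where the difficulty lies. The only genuinely delicate point is (b), specifically the coalescence input for the tree case: one must check not merely that the difference of two bridge trajectories is recurrent but that the trajectories actually merge once they meet (which is automatic here because they then occupy the same vertex of $D$), and that the countable intersection of these full-probability events is still of full probability. I would verify this directly for the lazy random walk as sketched; the corresponding dichotomy for a general null recurrent chain is obtained later by transporting it from the Renewal Eternal Family Tree, which is the role of Section \ref{S5}.
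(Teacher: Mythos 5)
Your proof is correct, but the second half takes a genuinely different route from the paper. For the dichotomy itself (no cycles, hence tree or forest) your minimum-of-the-time-coordinate argument is just a more detailed version of the paper's one-liner in Proposition \ref{Pr3}: out-degree one plus edges advancing time by one unit exclude cycles. Where you diverge is in exhibiting both cases. The paper's proof of Proposition \ref{Pr13} simply invokes the Renewal Bridge Graph: via the coupling \eqref{Co} its connectedness is equivalent to that of the Renewal EFF, and Proposition \ref{T1} shows, using Kemperman's recurrence/transience criterion for the one-sided anti-symmetric oscillating random walk, that for return-time tails $j^{-\alpha}$ one gets a tree when $\alpha\ge 1/2$ and a forest when $\alpha<1/2$. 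You instead fix a single chain, the lazy random walk, and vary the coupling of the driving sequence: maximal coupling (Example \ref{MCLRW}) gives a forest of parallel bi-infinite paths, while total independence gives coalescence of any two trajectories because their difference is a mean-zero bounded walk on $\mathbb{Z}$, hence recurrent. Your examples are more elementary (no heavy-tail analysis, no oscillating random walk), and they are legitimate since the paper explicitly allows arbitrary couplings in $x$ (Remark \ref{RT}); the trade-off is that your dichotomy is driven by the choice of coupling rather than by the chain, whereas the paper's Renewal example shows both regimes arising within one family of chains whose driving randomness is essentially canonical (only state $0$ is random), the transition between tree and forest being governed purely by the tail of the return-time distribution. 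Two small points worth making explicit if you keep your version: your forest example has no bi-infinite-path-free structure (each component is a bi-infinite path), which is consistent with the paper since Proposition \ref{Prnew}'s hypothesis fails under maximal coupling; and your coalescence argument should note, as you do, that merging is permanent because of out-degree one, so pairwise coalescence of the countably many trajectories does yield connectedness of the whole graph.
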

When the null recurrent Bridge Graph is connected, or equivalently it is a tree the following property holds:
\begin{proposition}
\label{Pr11}
If the null recurrent Bridge Graph is a tree, it has no bi-infinite path. 
\end{proposition}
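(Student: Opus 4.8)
The plan is to argue by contradiction, the key input being the one-endedness of the connected null recurrent Bridge Graph: a bi-infinite path sitting inside a tree always produces two distinct ends, which is incompatible with $B_X$ being one-ended. So the proof reduces to (i) a purely combinatorial observation about rays in trees, together with (ii) the structural fact, established elsewhere in the paper, that the tree $B_X$ has a single end.

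First I would set up the contradiction. Assume $B_X$ is a tree and that it contains a bi-infinite path $\{\beta_t\}_{t\in\mathbb{Z}}$; write $\beta_t=(t,b_t)$ with $b_t\in\mathcal S$, so $b_{t+1}=h(b_t,\xi_t^{b_t})$ and each directed edge $\beta_t\to\beta_{t+1}$ of the Doeblin Graph indeed lies in $B_X$ (the out-edge of any vertex of $B_X$ lies in $B_X$). Since the $\beta_t$ have pairwise distinct time-coordinates, they are pairwise distinct vertices, so $\{\beta_t\}$ is automatically a self-avoiding bi-infinite path of the tree $B_X$; in particular its forward tail $R_{+}:=(\beta_t)_{t\ge 1}$ and its backward tail $R_{-}:=(\beta_t)_{t\le 0}$ are two rays of $B_X$. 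Next I would show these rays give different ends: taking the finite vertex set $F=\{\beta_0,\beta_1\}$, in the tree $B_X$ the unique path between any $\beta_s$ with $s\le -1$ and any $\beta_u$ with $u\ge 2$ is the segment $\beta_s,\beta_{s+1},\dots,\beta_u$, which passes through both $\beta_0$ and $\beta_1$; hence $\beta_s$ and $\beta_u$ lie in different connected components of $B_X\setminus F$. Consequently the tail $(\beta_t)_{t\le -1}$ of $R_-$ and the tail $(\beta_t)_{t\ge 2}$ of $R_+$ lie in different components of $B_X\setminus F$, so $R_-$ and $R_+$ determine distinct ends of $B_X$, and $B_X$ has at least two ends. (Note no local finiteness of $B_X$ is needed, since each of the two relevant components is handed an explicit ray.)

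Finally I would invoke the one-endedness of a connected null recurrent Bridge Graph — established in the paper through the local unimodularity of $B_X$, namely that $B_X$ contains the unimodular, one-ended regeneration-time tree, from which one deduces that $B_X$ itself is one-ended (see Section~\ref{Main}, with the detailed arguments for the Renewal example in Section~\ref{S4} and for the general null recurrent chain in Section~\ref{S5}). This contradicts the conclusion of the previous paragraph, so $B_X$ can contain no bi-infinite path. The only substantial ingredient here is the one-endedness input; everything else is elementary tree combinatorics, so I expect no further obstacle — the work is entirely in the separately-proved structural result that $B_X$ is one-ended, and this statement is essentially its immediate corollary.
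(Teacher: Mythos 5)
There is a genuine gap: your argument is circular. For a tree, ``one-ended'' and ``contains no bi-infinite (self-avoiding) path'' are the \emph{same} statement -- a tree has at least two ends exactly when it contains a bi-infinite path -- so the ``key input'' you invoke is precisely the conclusion of Proposition~\ref{Pr11} in disguise. The paper does not establish the one-endedness of $B_X$ anywhere prior to this proposition; the sentence in the introduction (``This allows one to show that the Bridge Graph is one ended as well'') is describing the \emph{outcome} of the chain of results Proposition~\ref{Pr9} $\to$ Proposition~\ref{New3} $\to$ Proposition~\ref{Pr11}, not a separately available structural fact you may cite as input. What the paper does prove independently is that the \emph{Recurrence Time EFT} $G^{\mathcal T}$ (the regeneration-time tree sitting on the time axis) is unimodular and of class $\mathcal{I}/\mathcal{I}$, hence has no bi-infinite path. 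But ``$B_X$ contains a one-ended subtree'' does not by itself imply ``$B_X$ is one-ended'': a graph can perfectly well contain a one-ended spanning-like subtree while itself having many ends, so the deduction you wave at in your last paragraph is exactly the non-trivial content you have omitted.

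The missing step is Proposition~\ref{New3}: one must show that any bi-infinite path $\{\beta_t\}$ in $B_X$ is necessarily \emph{bi-recurrent}, i.e., hits the time axis $\mathbb{Z}\times\{s^*\}$ infinitely often in the past as well as in the future. The paper's argument for this is not elementary tree combinatorics: it fixes a time $T$, considers the set $D$ of times $T-t$ whose trajectory first returns to the time axis exactly at $T$, shows $D$ is a.s.\ finite (these are the first-generation descendants of $T$ in $G^{\mathcal T}$, and unimodularity forces finite degrees), and concludes that a bi-infinite path avoiding the time axis before time $T$ would force $D$ to be infinite, since every vertex of $B_X$ backtracks to the time axis. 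Only after this can the hypothetical bi-infinite path in $B_X$ be projected to a bi-infinite path in $G^{\mathcal T}$ and killed by the $\mathcal{I}/\mathcal{I}$ property. Your proposal, as written, defers all of this work to a black box that is not independently proved, so it does not constitute a proof. (Your observation that a bi-infinite path in a tree yields two ends is correct, but it only rephrases the statement to be proved.)
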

A similar result is valid in the non-connected case with an additional condition:
\begin{proposition}
\label{Prnew}
Consider a null recurrent Bridge Graph, $B_{X}$, which may not be connected. Suppose the Bridge Graph satisfies this condition that for all $(t_{1},s_{1})$ and $(t_{2},s_{2}) \in \mathbb{Z} \times \mathcal{S}$, the paths passing through $(t_{1},s_{1})$ and $(t_{2},s_{2})$ meet each other with positive probability in finite time. Then, the Bridge Graph has no bi-infinite path. 
\end{proposition}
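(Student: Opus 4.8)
The plan is a proof by contradiction. Assume that with positive probability $B_X$ contains a bi-infinite path. The event ``$B_X$ contains a bi-infinite path'' is invariant under the time shift $\theta$ of the driving sequence $\{\xi^x_t\}$, and $\theta$ is measure preserving and ergodic (the sequence being i.i.d.\ in $t$), so this event has probability one. The first step is a structural reduction. Since each vertex of $B_X$ has a unique out-edge (towards the next time step), the part $\{\beta_t\}_{t\ge 0}$ of a bi-infinite path is forced: it is the forward trajectory issued from $\beta_0$. Hence $B_X$ carries a bi-infinite path through a vertex $v$ if and only if $v$ admits an \emph{infinite descending chain}, i.e.\ a sequence $v=v_0,v_{-1},v_{-2},\dots$ in $B_X$ with $v_{-k-1}$ a preimage of $v_{-k}$ for every $k$. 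Write $V^\infty$ for the (random) set of vertices of $B_X$ admitting such a chain and $V^\infty_t:=\{x:(t,x)\in V^\infty\}$.

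The key structural observation is that $V^\infty$ is closed under the image (parent) map and surjective under it from below: the parent of a vertex of $V^\infty_t$ again lies in $V^\infty_{t+1}$, and conversely every vertex of $V^\infty_{t+1}$ has at least one preimage in $V^\infty_t$, namely the first step of its descending chain. So the image map restricts to a surjection of $V^\infty_t$ onto $V^\infty_{t+1}$, whence $t\mapsto |V^\infty_t|$ is non-increasing; being also stationary, it is a.s.\ constant in $t$, and by ergodicity of $\theta$ this constant is a deterministic $c\in\{0,1,\dots\}\cup\{\infty\}$, with the standing assumption forcing $c\ge 1$. Now I would bring in the coupling hypothesis. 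Fix a bi-infinite path $\beta\subseteq V^\infty$; for each $k$ the vertex $\beta_{-k}\in B_X$ lies on the forward trajectory $\gamma^{(t_k)}$ issued from some $(t_k,s^*)$, necessarily with $t_k\le -k$, so $t_k\to-\infty$; by forward determinism $\gamma^{(t_k)}$ coincides with $\beta$ from time $-k$ on, hence eventually lies inside $V^\infty$. So for infinitely many $t\to-\infty$ the forward trajectory from $(t,s^*)$ eventually enters $V^\infty$. Applying the coupling hypothesis to the pairs $\{(0,s^*),(t_k,s^*)\}$, and then inductively to all pairs $\{(t,s^*),(t',s^*)\}$, the forward trajectory from $(0,s^*)$ — and indeed from every $(t,s^*)$ — meets one of these trajectories with positive probability, hence coalesces with it and ends up in $V^\infty$; thus with positive probability all the forward trajectories from the vertices $(t,s^*)$, $t\in\mathbb Z$, coalesce, i.e.\ $B_X$ is connected. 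Connectedness of $B_X$ being $\theta$-invariant, $B_X$ would then be a.s.\ a tree, and Proposition~\ref{Pr11} rules out a bi-infinite path in it, contradicting $c\ge1$. (One may instead finish as in Proposition~\ref{Pr11}, by transporting the one-endedness of the regeneration-time subtree.)

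The hard part will be this last application of the coupling hypothesis, because that hypothesis concerns \emph{fixed} pairs of vertices, whereas the times $t_k$ and the event ``the forward trajectory from $(t,s^*)$ eventually enters $V^\infty$'' are random and are not independent of the randomness deciding whether two forward trajectories meet. Making it rigorous requires disintegrating along the $\sigma$-field of the negative-time driving variables — on which the descending-chain structure up to time $0$, hence membership in $V^\infty$ at non-positive times, is measurable — and then invoking the Markov property at time $0$ together with the monotonicity of $|V^\infty_t|$ from the second paragraph, so as never to have to refer to $V^\infty$ at positive times, whose description still involves negative-time randomness. I expect this disintegration, rather than the combinatorics, to be the delicate point.
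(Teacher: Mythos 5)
Your structural setup (the set $V^\infty$ of vertices admitting infinite descending chains, the surjectivity of the parent map from $V^\infty_t$ onto $V^\infty_{t+1}$, and the resulting a.s.\ constancy of $|V^\infty_t|$) is fine, but the last step is not. From the hypothesis that each \emph{pair} of trajectories meets with positive probability you conclude that ``with positive probability all the forward trajectories from the vertices $(t,s^{*})$, $t\in\mathbb{Z}$, coalesce, i.e.\ $B_{X}$ is connected.'' That inference is invalid: an intersection of infinitely many events, each of positive probability, can have probability zero, and here it typically does. The Renewal Bridge Graph with totally independent driving sequence and tail exponent $\alpha<\tfrac{1}{2}$ satisfies the hypothesis of Proposition \ref{Prnew} (any two fixed trajectories meet with positive probability, as the paper itself notes for totally independent driving sequences), and yet by Propositions \ref{T1} and \ref{Pr3} it is a.s.\ a forest. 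So the reduction to the connected case of Proposition \ref{Pr11} cannot be made to work, and the difficulty you flag at the end (disintegrating over the past) is not the real obstruction.

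The paper's proof never tries to establish connectivity. Via Proposition \ref{New3}, every bi-infinite path of $B_{X}$ corresponds to a bi-infinite path of the Recurrence Time EFF, which \emph{is} unimodular (Proposition \ref{Pr9}) even when $B_{X}$ is not unimodularizable. If there were exactly one bi-infinite path, it would be a covariant subset of positive density, and the mass-transport computation of Proposition \ref{Pr2} forces the expected gap between consecutive points of that path to be finite, contradicting $\mathbb{E}[\mathcal{T}_{t}]=\infty$. If there were at least two, the hypothesis is used only for a single fixed pair at a time: the set $M$ of time-$0$ vertices lying on bi-infinite paths is measurable with respect to the past, so for two points of $M$ the future trajectories merge with positive probability, producing with positive probability one connected component of the unimodular Recurrence Time EFF containing two distinct bi-infinite paths --- impossible by the foil classification theorem. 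If you want to keep your $V^\infty$ formalism, the fix is to aim at this dichotomy (one versus several bi-infinite paths inside a single unimodular component), not at connectivity of $B_{X}$.
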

The main result about unimodularity is: 
\begin{proposition}
\label{Pr12}
When the null recurrent Bridge Graph is a tree, it is not unimodularizable, in general.
\end{proposition}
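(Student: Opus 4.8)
\textbf{Proof proposal for Proposition \ref{Pr12}.}

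The plan is to exhibit a single irreducible, aperiodic and null recurrent Markov chain whose Bridge graph $B_{X}$ is almost surely connected — hence, by Proposition \ref{Pr13} and the one-endedness established earlier in the paper, a one-ended tree — and to show that this particular tree admits no unimodular re-rooting. I would first isolate the only structural feature that is used. In $B_{X}$ every vertex has exactly one \emph{parent}, namely its image under the Doeblin dynamics (which necessarily remains in $B_{X}$), and the \emph{children} of a vertex $v$ are exactly the Doeblin-predecessors of $v$ lying in $B_{X}$, i.e.\ the bridge trajectories that coalesce at $v$. Since the tree is one-ended, this parent/children splitting is canonical, so $\deg^{-}(v):=\#\{\text{children of }v\}$ is a function of the unrooted network. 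Applying the Mass Transport Principle to the transport $f(u,v)=\mathbbm{1}[v=\mathrm{parent}(u)]$ — which has $\sum_{v}f(o,v)=1$ always and $\sum_{u}f(u,o)=\deg^{-}(o)$ — shows that \emph{any} unimodular re-rooting $(T,o)$ with $T\myeq B_{X}$ must satisfy $\mathbb{E}[\deg^{-}(o)]=1$.

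Then I would choose the chain together with a coupling of the driving variables $\{\xi_{t}^{x}\}_{x}$ at each fixed time (keeping them independent in $t$ and with marginals $p_{x,y}$, as allowed by Remark \ref{RT}) so that the forward maps $M_{t}:=\bigl(x\mapsto h(x,\xi_{t}^{x})\bigr)$ force the bridge trajectories to coalesce at least two at a time into every vertex that is ever reached; equivalently, every vertex of $B_{X}$ has $\deg^{-}\geq 2$, so $B_{X}$ is a one-ended tree of minimum degree at least $3$. One verifies separately that the coupling still makes all bridge trajectories eventually merge (so $B_{X}$ is connected, as in Example \ref{MCLRW} but with a coalescing rather than a bijective coupling), and that the marginal chain remains null recurrent. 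For such a chain a unimodular re-rooting would have to satisfy both $\mathbb{E}[\deg^{-}(o)]=1$ and $\deg^{-}(o)\geq 2$ almost surely, hence $\mathbb{E}[\deg^{-}(o)]\geq 2$, a contradiction; therefore $B_{X}$ is not unimodularizable, which establishes that the null recurrent Bridge tree is not unimodularizable in general.

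The main obstacle is the construction, not the impossibility step. One must simultaneously secure (i) irreducibility, aperiodicity and null recurrence of the marginal chain, (ii) connectedness of $B_{X}$ (coalescence of all bridge trajectories), and (iii) the almost-sure bound $\deg^{-}(\cdot)\geq 2$. Requirements (iii) and null recurrence pull against each other: forcing many coalescences into every vertex at every step tends to collapse the set of states reachable from the line $\mathbb{Z}\times\{s^{*}\}$ and hence to make returns to $s^{*}$ fast, i.e.\ to push the chain into the positive recurrent regime. The coupling therefore has to be engineered so that the ``$\geq 2$-to-$1$'' structure of the maps $M_{t}$ spreads over the infinite state space rather than contracting toward $s^{*}$ — this is where countability of $\mathcal{S}$ is exploited — and the second delicate point is verifying (ii), that the backward branching structure rooted at a generic vertex almost surely meets $\mathbb{Z}\times\{s^{*}\}$ so that $B_{X}$ is indeed the whole relevant subtree. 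Once a clean example is fixed, closing the argument is the one-line Mass Transport computation above; and this is consistent with the companion facts in the paper, since the thinned regeneration-time subtree stays unimodular even though the ambient tree acquires minimum degree $\geq 3$.
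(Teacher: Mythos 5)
There is a genuine gap: your argument is conditional on an example that you never construct. The impossibility half is sound — for a unimodular one-ended (or forward-oriented) tree, transporting unit mass from each vertex to its parent gives $\mathbb{E}[\deg^{-}(o)]=1$, which is incompatible with $\deg^{-}\geq 2$ a.s. — but the entire weight of the proof then rests on exhibiting an irreducible, aperiodic, null recurrent chain with a coupling making $B_{X}$ a connected tree in which \emph{every} vertex (including every $(t,s^{*})$, which lies in $B_{X}$ by definition whether or not any trajectory coalesces into it) has at least two children. You acknowledge yourself that this construction is ``the main obstacle'' and that the minimum-in-degree requirement pulls against null recurrence; no candidate chain, state space, or coupling is given, and no verification of (i)--(iii) is even sketched. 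As written, the proposal is a plan for a proof, not a proof.

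The paper's argument avoids the need for any bespoke example. It treats $B_{X}$ as a \emph{marked} network (vertices carrying their time/state coordinates), so that the time axis $\mathbb{Z}\times\{s^{*}\}$ is a covariant subset and, by Proposition \ref{Pr14}, the foils of the coalescence vertex-shift are exactly the intersections with vertical timelines, which are a.s. infinite by Proposition \ref{Pr15}. The no infinite/finite inclusion lemma for unimodular networks then forces a covariant subset to meet each infinite foil infinitely often, whereas the time axis meets each foil exactly once — a contradiction that applies to \emph{every} such null recurrent Bridge tree, not just a tailored one. If you want to salvage your route, note that it would in principle prove something the paper's marked-network argument does not (non-unimodularizability of the \emph{unmarked} graph for your example), but you would have to actually produce and verify the example; alternatively, adopt the foil/timeline argument, which closes the proof with objects already established in the paper.
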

%The importance of the bi-recurrent path,$\{ \beta _{t} \}_{t \in \mathbb{Z}}$, in the positive recurrent Bridge graph is that the intersection of $\{ \beta _{t} \}_{t \in \mathbb{Z}}$ with zero time, $\{\beta_{0}\}$, gives a perfect sample of the Markov Chain.
Consider all the vertices in the intersection of the Bridge Graph and the zero timeline, i.e., those on the state axis. This random set  will be referred to as the \textbf{$S$-set}. The properties of the $S$-set in the null recurrent Bridge Graph are studied in Section \ref{S5}. \\
Two multiplicities for a point in the $S$-set are now defined.
One can look at each vertex in the Bridge Graph (or any directed graph) as an individual.
Moreover, by following the outgoing edge, go from each vertex to its parent vertex.
In the Bridge Graph, one can consider the descendants of a vertex that lie on the time axis, i.e.,
belonging to $\mathbb{Z} \times \{s^{*}\}$. Descendants of this type are referred to as 
\textbf{*-descendants}.  

\begin{definition}
\label{D06}
The \textbf{Taboo multiplicity} of a point in the $S$-set of a Bridge Graph is the number of its *-descendants such that the path from this descendant to the $S$-set does not visit state $s^{*}$ before time zero. See Figure~\ref{Pic1}. Note that by definition, the Taboo multiplicity is positive at all points of the $S$-set.
\end{definition}
\begin{remark}
Note that the order of the generations is not consistent with the time direction, as ancestors live in the future, and these notions should not be mixed up. 
\end{remark}
\begin{figure}[h]
\includegraphics[width=\textwidth]{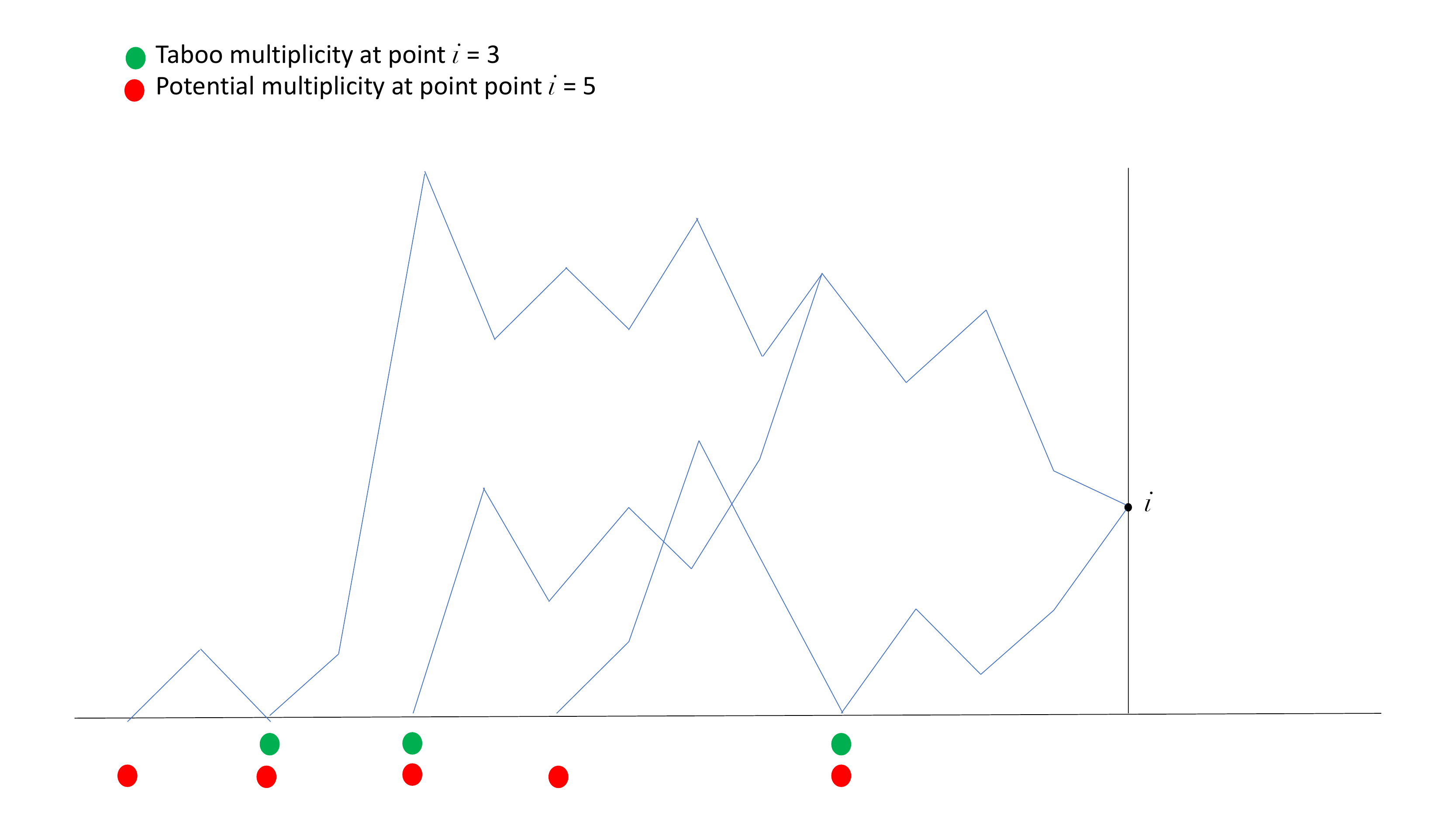}
\caption{The Taboo multiplicity and the Potential multiplicity}
\label{Pic1}
\centering
\end{figure}
So far, it has has been shown that the Taboo multiplicity of any vertex in the Bridge Graph is a.s. finite, so the definition of Taboo multiplicity gives a locally finite random measure whose support is the $S$-set. This random measure is called the \textbf{Taboo Point Process (Taboo PP)} and is denoted by $\tau$. Below, $\tau_{t}(j)$ denotes the random mass (multiplicity) that the Taboo PP puts on $j$ at time $t$. \\
\begin{theorem}
\label{Propo1}
The Taboo PP, $\tau$, is a steady state of the Taboo Dynamics, 
\begin{equation}
\tau_{t+1}=H^{T}(\tau_{t}, \xi_{t}).
\end{equation}
\end{theorem}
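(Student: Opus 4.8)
The plan is to turn the claimed stochastic identity $\tau_{t+1}=H^T(\tau_t,\xi_t)$ into a purely combinatorial identity on the Doeblin and Bridge Graphs that holds pathwise, for every fixed realization of $\{\xi^x_t\}_{t,x}$, and then to establish that identity by a bijection between families of trajectories. I would start from the description of $\tau_t(j)$ as the number of *-descendants $w$ of the vertex $(t,j)$ whose trajectory, followed from $w$ up to $(t,j)$, does not meet state $s^*$ after leaving $w$; I will call such a $w$, decorated with that trajectory segment, a \emph{taboo run ending at $(t,j)$}. Since a non-trivial trajectory arriving at $(t,s^*)$ is by definition at $s^*$ at its very endpoint, the only taboo run ending at $(t,s^*)$ is the trivial one, so $\tau_t(s^*)=1$ for all $t$; this already matches the $y=s^*$ line of $H^T$ in \eqref{004}, and what remains is the case $y\neq s^*$.

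Fix $y\neq s^*$. The bijection I would use is ``delete the last edge'' versus ``append an edge''. Given a taboo run $r$ ending at $(t+1,y)$, its penultimate vertex is some $(t,x)$, which is a child of $(t+1,y)$ in the Doeblin Graph, so $h(x,\xi^x_t)=y$; if $r$ has length at least two then necessarily $x\neq s^*$ (otherwise $(t,s^*)$ would be an interior vertex of $r$ at the forbidden state), and deleting the last edge of $r$ gives a taboo run ending at $(t,x)$, the taboo property being inherited because the removed step $(t,x)\to(t+1,y)$ touches $s^*$ at neither of its two times. If instead $r$ has length one, it is $\big((t,s^*)\to(t+1,y)\big)$ and I send it to the trivial taboo run at $(t,s^*)$. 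Conversely, for any $x$ with $h(x,\xi^x_t)=y$ and any taboo run ending at $(t,x)$, appending the edge $(t,x)\to(t+1,y)$ yields a taboo run ending at $(t+1,y)$: when $x\neq s^*$ the new interior vertex $(t,x)$ is not at $s^*$, and when $x=s^*$ the input run is the trivial one, so the output is the length-one run, which meets $s^*$ only at its start. These maps are mutually inverse, and they give a bijection
\[
\big\{\text{taboo runs ending at }(t+1,y)\big\}\ \longleftrightarrow\ \bigsqcup_{x\,:\,h(x,\xi^x_t)=y}\big\{\text{taboo runs ending at }(t,x)\big\},
\]
the union being disjoint since distinct states $x$ yield distinct penultimate vertices.

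To finish I would just count: the left side has cardinality $\tau_{t+1}(y)$ and the right side $\sum_x\tau_t(x)\,\mathbbm{1}_{\{h(x,\xi^x_t)=y\}}$, both a.s.\ finite by the already-established a.s.\ finiteness of the Taboo multiplicity, so the bijection forces $\tau_{t+1}(y)=\sum_x\tau_t(x)\,\mathbbm{1}_{\{h(x,\xi^x_t)=y\}}$, which is precisely the $y\neq s^*$ line of $H^T(\tau_t,\xi_t)$; combined with $\tau_{t+1}(s^*)=1$ this gives $\tau_{t+1}=H^T(\tau_t,\xi_t)$ pathwise, hence a fortiori as an identity in distribution. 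I expect the only delicate part to be the bookkeeping around the reference state $s^*$: ruling out runs that touch $(t,s^*)$ in their interior, correctly assigning the runs freshly born at $(t,s^*)$ to the term $x=s^*$ of the sum (with coefficient $\tau_t(s^*)=1$), and disposing of the degenerate cases — notably that both sides vanish when $(t+1,y)\notin B_X$ — so that no trajectory is double counted or dropped. The a.s.\ finiteness of $\tau_t$ (hence that $H^T(\tau_t,\xi_t)$ is a genuine locally finite counting measure) is used exactly here.
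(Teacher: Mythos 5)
Your proof is correct and follows essentially the same route as the paper's: both identify, pathwise, the measure $H^{T}(\tau_{t},\xi_{t})$ with the taboo multiplicities at time $t+1$, your delete-the-last-edge/append-an-edge bijection simply making explicit the counting that the paper asserts in one line. The only ingredient the paper adds is the (immediate) observation that $\tau_{t+1}\overset{d}{=}\tau_{t}$ by time-stationarity of the Bridge Graph construction, which is what turns your pathwise identity into the statement that $\tau$ is a steady state.
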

The following theorem shows that there is a relation between the Taboo PP in the null recurrent Bridge Graph and the stationary measure of the null recurrent Markov Chain:
\begin{theorem}
\label{T2}
Let $\{X_{t}\}_{t\in \mathbb{Z}}$ be an aperiodic and recurrent MC,  and $B_{X}$ be its associated  Bridge Graph with the driving sequence $\{ \xi_{t}^{x} \}_{t\in \mathbb{Z},x\in S}$. Then $\mathbb{E}[\tau_{t}(i)] $, the mean measure of the Taboo PP at points, does not depend $t$, nor on the coupling of $\{ \xi_{t}^{x} \}_{t\in \mathbb{Z},x\in S}$ in $x$, and it is equal to the stationary measure of that point in the Markov Chain $\{X_{n}\}$. That is, 
\begin{equation}
\label{5-80}
\mathbb{E}[\tau_{t}(i)]=\sigma(i), \quad \forall i \in \mathcal{S},
\end{equation}
where $\sigma$ is the invariant measure of the Markov Chain $\{X_{n}\}$ and $\sigma(s^{*})=1$.\\
\end{theorem}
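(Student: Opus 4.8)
The plan is to show that $\mathbb{E}[\tau_t(i)]$ equals the classical excursion (or taboo) representation of the invariant measure of $\{X_n\}$, normalised so that its value at $s^*$ is $1$; below $\mathbb{P}_{s^*}$ and $\mathbb{E}_{s^*}$ denote the law and expectation of the Markov chain with transition kernel $P$ started from $s^*$. Since the driving field $\{\xi_t^x\}_{t\in\mathbb{Z},\,x\in\mathcal S}$ is i.i.d.\ in $t$, the law of the Doeblin Graph, and hence of the Bridge Graph and of the Taboo PP, is invariant under the time shift, so $\tau_t$ has the law of $\tau_0$ and it suffices to treat $t=0$. The first step is to unwind Definition~\ref{D06}: every *-descendant of the vertex $(0,i)$ is a vertex $(-n,s^*)$ with $n\ge 1$ whose forward trajectory in the Bridge Graph passes through $(0,i)$, and such a vertex is counted by $\tau_0(i)$ precisely when that trajectory avoids the state $s^*$ at all the intermediate times $-n+1,\dots,-1$. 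Writing $E^i_n$ for the event that the forward path issued from $(-n,s^*)$ is at state $i$ at time $0$ and stays outside $\{s^*\}$ at times $-n+1,\dots,-1$, this gives $\tau_0(i)=\sum_{n\ge 1}\mathbbm{1}_{E^i_n}$ for every $i\in\mathcal S$.

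The second step turns this into a statement about the chain. Taking expectations and applying monotone convergence (all summands being nonnegative) yields $\mathbb{E}[\tau_0(i)]=\sum_{n\ge 1}\mathbb{P}(E^i_n)$. The crucial observation is that the forward trajectory issued from a \emph{fixed} vertex $(s,x)$ of the Doeblin Graph is distributed as the Markov chain with kernel $P$ started from $x$: at its $k$-th step it sits at a state $y_k$ measurable with respect to $\{\xi_r^\cdot:\ r<s+k\}$, hence independent of the vector $\{\xi_{s+k}^z\}_{z\in\mathcal S}$, so that, conditionally on $y_k=y$, its next state $h(y,\xi_{s+k}^y)$ has law $p_{y,\cdot}$ — and this holds no matter how the variables $\{\xi_{s+k}^z\}_z$ are coupled in $z$. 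This is exactly what makes $\mathbb{E}[\tau_t(i)]$ independent of $t$ and of the $x$-coupling of the driving sequence. Applied to the path from $(-n,s^*)$ it gives $\mathbb{P}(E^i_n)=\mathbb{P}_{s^*}\bigl(X_n=i,\ X_1,\dots,X_{n-1}\neq s^*\bigr)$, a taboo probability of the chain.

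It remains to identify the series. Summing over $n$,
\[
\mathbb{E}[\tau_0(i)]=\sum_{n\ge 1}\mathbb{P}_{s^*}\bigl(X_n=i,\ X_1,\dots,X_{n-1}\neq s^*\bigr)=\mathbb{E}_{s^*}\Bigl[\textstyle\sum_{n=1}^{T}\mathbbm{1}_{\{X_n=i\}}\Bigr],
\]
with $T:=\inf\{n\ge 1:\ X_n=s^*\}$. Since $\{X_n\}$ is irreducible and recurrent, $T<\infty$ $\mathbb{P}_{s^*}$-almost surely, so the right-hand side is the standard excursion construction of a stationary measure of $\{X_n\}$: it is $P$-invariant, finite at every state, and equal to $1$ at $s^*$ (there the series collapses, by recurrence, to $\mathbb{P}_{s^*}(T<\infty)=1$). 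By uniqueness of the stationary measure up to a multiplicative constant it therefore coincides with $\sigma$, whence $\mathbb{E}[\tau_t(i)]=\sigma(i)$ for every $i\in\mathcal S$; as a by-product, the finiteness of $\sigma$ re-proves that the Taboo multiplicities are integrable and hence a.s.\ finite. I expect the main difficulty to lie in the bookkeeping of the first step — matching *-descendants with taboo excursions of the chain and handling the boundary state $s^*$ — and in making the ``forward trajectory is Markov$(P)$'' claim precise with respect to the filtration generated by $\{\xi_r^\cdot:\ r<t\}$; everything else is classical Markov chain theory.
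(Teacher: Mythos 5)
Your proposal is correct and follows essentially the same route as the paper: both reduce $\mathbb{E}[\tau_0(i)]$ to the sum of taboo probabilities $\sum_{n\ge 1}\mathbb{P}_{s^*}(X_n=i,\ X_1,\dots,X_{n-1}\neq s^*)$ and then invoke the classical excursion representation of the invariant measure (Lemma \ref{L02} in the paper), with the coupling-independence coming from the fact that a single forward trajectory is Markov$(P)$ whatever the coupling in $x$. The only difference is presentational: the paper packages the exchange of ``which starting time'' versus ``which visit time'' as a mass transport identity on the Recurrence Time EFF, whereas you carry it out directly by Fubini and stationarity, which amounts to the same computation.
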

The second multiplicity that will be considered for a point in the $S$-set is the ``Potential multiplicity'':
 \begin{definition}
\label{D07}
The \textbf{Potential multiplicity} of a point in the $S$-set is the number of all its *-descendants in the Bridge Graph. See Figure \ref{Pic1}.
%\begin{figure}[h]
%\includegraphics[width=\textwidth]{7}
%\centering
%\end{figure}
\end{definition}
The Potential multiplicity on the $S$-set gives a random measure with support the $S$-set itself. This random measure is called the \textbf{Potential Point Process (Potential PP)} and denoted by $\pi$.  Again, $\pi_{t}(j)$ denotes the random mass that the Potential PP puts on $j$ at time $t$. Proposition \ref{Pr16} shows that the Potential multiplicity of the vertices in the null recurrent Bridge Graph is a.s. finite. In addition the following result holds: \\
\begin{theorem}
\label{Pr18}
In the null recurrent case, the Potential PP, $\pi$, is a steady state of the Potential Dynamics,
\begin{equation}
\pi_{t+1}=H^{P}(\pi_{t}, \xi_{t}).
\end{equation}
\end{theorem}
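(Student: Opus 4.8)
The plan is to verify the pointwise identity $\pi_{t+1}(y)=H^{P}(\pi_t,\xi_t)(y)$ for every $y\in\mathcal{S}$, by decomposing the family of *-descendants of the vertex $(t+1,y)$ in the Bridge Graph. First I would make sense of $\pi_t$ for arbitrary $t$: since the driving sequence $\{\xi_t^x\}_{x\in\mathcal{S}}$ is stationary in $t$, the Doeblin Graph and hence $B_X$ are invariant under the time shift, so $\pi_t$ is the image of $\pi=\pi_0$ under that shift, i.e. $\pi_t(x)$ is the number of *-descendants of $(t,x)$. This number is $0$ exactly when $(t,x)\notin B_X$: any *-descendant, together with its forward bridge path, which passes through $(t,x)$, lies in $B_X$, so a vertex outside $B_X$ has no *-descendant; conversely $(t,x)\in B_X$ forces $(t,x)$ to have at least one *-descendant (itself, when $x=s^*$, or else some strict past one).

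Now fix $y$ and a realisation of $\xi_t$. Each time-$t$ vertex carries the single outgoing edge to $(t+1,h(x,\xi_t^x))$, so the time-$t$ vertices whose edge lands on $(t+1,y)$ are exactly the $(t,x)$ with $h(x,\xi_t^x)=y$. Hence a *-descendant of $(t+1,y)$ is either $(t+1,y)$ itself --- which is a *-descendant precisely when $y=s^*$, and $(t+1,s^*)\in B_X$ always --- or a vertex strictly in the past whose forward path meets a unique time-$t$ vertex $(t,x)$, necessarily with $h(x,\xi_t^x)=y$, and which is then exactly a *-descendant of that $(t,x)$; conversely every *-descendant of such a $(t,x)$ is a *-descendant of $(t+1,y)$. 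The subfamilies indexed by distinct $x$ are disjoint, since their elements lie on forward paths pinned to state $x$ at time $t$, and any $x$ for which $(t,x)$ has no *-descendant --- in particular any $x$ with $(t,x)\notin B_X$ --- contributes the empty family. Counting, with almost-sure finiteness of all these families guaranteed by Proposition~\ref{Pr16}, yields
\begin{equation*}
\pi_{t+1}(y)=\mathbbm{1}_{\{y=s^*\}}+\sum_{x\in\mathcal{S}}\pi_t(x)\,\mathbbm{1}_{\{h(x,\xi_t^x)=y\}},
\end{equation*}
which is exactly $H^{P}(\pi_t,\xi_t)(y)$, both when $y=s^*$ and when $y\ne s^*$. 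Since $y$ is arbitrary, $\pi_{t+1}=H^{P}(\pi_t,\xi_t)$, so $\pi$ is a steady state of the Potential Dynamics.

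There is no genuine analytic difficulty here; the proof is bookkeeping on the forest of descendants of the Bridge Graph. The two points that need care, and which I regard as the only real obstacle, are: the convention that a vertex is its own generation-$0$ descendant, which is what produces the additive $\mathbbm{1}_{\{y=s^*\}}$ term (this is the $n=0$ term of the potential kernel $\sum_{n\ge 0}p^n(s^*,\cdot)$, whose divergence in the null recurrent case is what makes the mean measure of $\pi$ infinite); and the verification that vertices lying outside $B_X$, or with no *-descendant, contribute nothing on either side, so that the sum over all of $\mathcal{S}$ in the definition of $H^{P}$ coincides with the sum over bridge vertices. Almost-sure finiteness of each $\pi_{t+1}(y)$, without which the identity would not be one between locally finite counting measures, is precisely the content of Proposition~\ref{Pr16}, and is the only place where the null recurrence hypothesis is actually used.
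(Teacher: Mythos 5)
Your proof is correct and follows essentially the same route as the paper's: both decompose the *-descendants of $(t+1,y)$ according to the time-$t$ vertex $(t,x)$ with $h(x,\xi_t^x)=y$ through which they pass, identify the resulting count with $\sum_x \pi_t(x)\mathbbm{1}_{\{h(x,\xi_t^x)=y\}}$, and account for the extra unit at $s^*$ by the convention that a vertex is its own generation-$0$ descendant. Your version merely spells out the bookkeeping (disjointness of the subfamilies, vanishing contribution of vertices outside $B_X$, the appeal to Proposition~\ref{Pr16} for local finiteness) that the paper leaves implicit.
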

The relation between the Potential PP of a null recurrent Bridge Graph and the associated  MC is summarized in the following theorem. 
\begin{theorem}
\label{T3}
Consider a null recurrent Markov Chain $\{X_{n} \}$ and its associated Bridge Graph $B_{X}$. The mean measure of the Potential PP is equal to the entries of a row in the  potential matrix of the Markov Chain $\{X_{n}\}$. So,
\begin{equation}
\label{5-8}
\mathbb{E}[\pi_{t}(i)]=\infty, \quad  \forall i \in \mathcal{S}.
\end{equation}
\end{theorem}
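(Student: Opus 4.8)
The plan is to compute $\mathbb{E}[\pi_0(i)]$ directly from the definition of the Potential multiplicity and to recognize the answer as an entry of the potential (Green's) matrix, after which the value $\infty$ follows from the classical recurrence criterion. The first step is to rewrite the Potential multiplicity $\pi_0(i)$ of the vertex $(0,i)$ as an explicit counting sum over the past. Since every vertex of the Doeblin Graph has a single outgoing edge, each vertex issues a unique forward Doeblin trajectory, and a $*$-descendant of $(0,i)$ is precisely a vertex of the form $(t,s^*)$ with $t\le 0$ whose forward trajectory $(t,s^*)\to(t+1,h(s^*,\xi_t^{s^*}))\to\cdots$ passes through $(0,i)$ at time $0$; such a vertex automatically belongs to the Bridge Graph, and for each fixed $t\le 0$ there is at most one such $*$-descendant, because that trajectory is a deterministic function of the driving sequence and occupies a single state at time $0$. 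Writing $\Phi_{t\to 0}$ for the random map obtained by composing the one-step Doeblin maps from time $t$ to time $0$, we obtain
\begin{equation}
\pi_0(i)=\sum_{t\le 0}\mathbbm{1}\{\Phi_{t\to 0}(s^*)=i\}.
\end{equation}
By Proposition~\ref{Pr16} this sum is a.s.\ finite, so $\pi$ is a genuine locally finite measure supported on the $S$-set; only its expectation is needed here.

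Next I would take expectations and invoke Tonelli (all summands are nonnegative) to interchange sum and expectation:
\begin{equation}
\mathbb{E}[\pi_0(i)]=\sum_{t\le 0}\mathbb{P}\bigl(\Phi_{t\to 0}(s^*)=i\bigr).
\end{equation}
The process $t\mapsto\Phi_{t\to 0}(s^*)$ is, by the defining property $\mathbb{P}(h(x,\xi^x_s)=y)=p_{x,y}$ together with the independence in $s$ of the $\xi$'s, a realization of the Markov Chain started at $s^*$: conditionally on its position $x$ at any time, its next position has law $p_{x,\cdot}$, since the $\xi$ governing that step is independent of everything before it. Hence the state reached at time $0$, after $-t$ steps, has distribution $p^{(-t)}_{s^*,\cdot}$, and this depends only on the one-dimensional marginals of the $\xi$'s, not on how $\{\xi_t^x\}_x$ is coupled for a fixed $t$. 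Therefore $\mathbb{P}(\Phi_{t\to 0}(s^*)=i)=p^{(-t)}_{s^*,i}$ and
\begin{equation}
\mathbb{E}[\pi_0(i)]=\sum_{n\ge 0}p^{(n)}_{s^*,i}=G(s^*,i),
\end{equation}
the $(s^*,i)$ entry of the potential matrix $G=\sum_{n\ge 0}P^n$ of $\{X_n\}$, i.e.\ the row of the potential matrix indexed by $s^*$. Stationarity in $t$ of the driving sequence then shows $\pi_t\overset{d}{=}\pi_0$, so $\mathbb{E}[\pi_t(i)]$ does not depend on $t$.

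Finally I would invoke the standard fact that for an irreducible recurrent Markov Chain the Green's function is identically infinite, $G(x,y)=\mathbb{E}_x[\#\{n\ge 0:X_n=y\}]=\infty$ for all $x,y\in\mathcal{S}$ (every state is visited infinitely often a.s.), and apply it with $x=s^*,\ y=i$ to conclude $\mathbb{E}[\pi_t(i)]=G(s^*,i)=\infty$ for every $i$, which is \eqref{5-8}.

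The only genuinely delicate point is the first step: matching the graph-theoretic notion of ``$*$-descendant in the Bridge Graph'' with ``past copy of the Markov Chain issued from $s^*$ that sits at $(0,i)$ at time $0$'', and checking that the two bookkeepings agree exactly, namely that each past time $t\le 0$ contributes either $0$ or $1$ to the Potential multiplicity (with $t=0$ contributing iff $i=s^*$), that no $*$-descendant is missed because of the Bridge Graph membership requirement, and that coalescing trajectories are not over- or under-counted. Once this identity \eqref{5-8}'s precursor is in place, the remainder is the one-line stationarity-and-Markov-property computation above combined with the classical criterion for recurrence via the potential matrix.
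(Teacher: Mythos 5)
Your proposal is correct, and it arrives at exactly the identity the paper uses — $\mathbb{E}[\pi_0(i)]$ equals the $(s^*,i)$ entry of the potential matrix, which is infinite by recurrence — but by a different mechanism. The paper proves this identity with the mass transport principle on the Recurrence Time EFF: it defines $g[G^{\mathcal{T}},t,t']=1$ when the path $Q_t$ issued from $(t,s^*)$ passes through $(t',y)$, and uses unimodularity (Proposition \ref{Pr9}) to equate $\mathbb{E}[\sum_t g(t,0)]$ (the mean Potential mass at $y$ at time $0$) with $\mathbb{E}[\sum_t g(0,t)]$ (the expected number of visits of $Q_0$ to $y$, i.e.\ $R_{s^*y}$). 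You instead unroll the left-hand side directly: $\pi_0(i)=\sum_{t\le 0}\mathbbm{1}\{\Phi_{t\to 0}(s^*)=i\}$, Tonelli, and the observation that each forward trajectory from $(t,s^*)$ is a copy of the chain started at $s^*$, so each term is $p^{(-t)}_{s^*,i}$ and the sum is $\sum_{n\ge 0}p^{(n)}_{s^*,i}$. The two computations are equivalent in substance — the MTP for a stationary marking of $\mathbb{Z}$ is essentially Tonelli plus translation invariance — but your route is more elementary and self-contained: it bypasses the Recurrence Time EFF and its unimodularity entirely, and it makes explicit (via the reduction to one-dimensional marginals of $h(x,\xi^x)$) why the answer is insensitive to the coupling of $\{\xi_t^x\}_x$ in $x$, a point the paper only records for the Taboo PP in Theorem \ref{T2}. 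What the paper's formulation buys in exchange is uniformity: the same transport function $g$, with the extra taboo condition appended, yields Theorem \ref{T2} in one stroke, so the two mean-measure results are presented as instances of a single mass-transport identity. Your accounting of the delicate first step (one contribution per starting time $t\le 0$, with $t=0$ contributing iff $i=s^*$, and coalescing trajectories counted by their distinct starting vertices) is accurate and matches the paper's definition of the Potential multiplicity and of $H^P$.
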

\begin{remark}
Equation (\ref{5-8}) remains valid in the positive recurrent case. Also, in the transient Bridge Graph, it can be shown that the potential multiplicities are such that their means are equal to the entries of the classical potential matrix for MCs \cite{Pierre}. This is why the multiplicity, the associated point process and the dynamics are called ``potential''. In the positive recurrent case, since the value of the potential multiplicity, in one state is infinite, it is not a locally finite measure. So Theorem \ref{Pr18} does not hold in the positive recurrent case. 
\end{remark}
%We will also show that the Natural Dynamics does not admit a stationary state when considering it on the Bridge Graph. 
\section{Renewal Bridge Graph}
\label{Renewal}
This section first introduces the Renewal Markov Chain, a simple example of recurrent Markov Chain, which may be positive or null recurrent. After that, the Renewal Bridge Graph, which is the Bridge Graph constructed from the Renewal Markov Chain, is introduced. Before going through the proof of the properties of the general null recurrent Bridge Graph, the proofs are first established in this particular example. \\
Consider random variable $\eta$ on $\mathbb{N^{*}}$ \footnote{In this paper, $\mathbb{N^{*}}$ denotes the natural numbers without zero and $\mathbb{N}$ natural numbers with zero.} with distribution 
\begin{equation}
\label{5-0-0}
\Lambda=\{p_{k},k \in \mathbb{N}\}; p_{k}=\mathbb{P}(\eta=k).
\end{equation}
Suppose that $\eta$ is such that if 
\begin{equation*}
\label{A}
A=\{n+1 \in \mathbb{N}; p_{n}>0\},
\end{equation*}
then $A$ is infinite, and the greatest common divisor of $A$ is equal to $1$. Using this random variable, one can define the following Markov Chain:
\begin{definition}[Renewal Markov Chain]
\label{5-0-1}
Consider the following transition probabilities on the non-negative integers: for $i \ne 0$ 
\begin{equation}
\label{5-14}
p_{ij}=
\begin{cases}
1& \text{if}\ j=i-1, \\
0& \text{otherwise,}\ 
\end{cases}
\end{equation}
and for $i = 0$ 
\begin{equation}
\label{5-5}
p_{0j}= p_{j+1},
\end{equation}
where the $p_{j}$s are the probabilities of the random variable $\eta$ defined in \eqref{5-0-0}. This Markov Chain is called the \textbf{Renewal Markov Chain}. The assumptions that $A$ is infinite and $gcd(A)=1$ make the Renewal MC irreducible and aperiodic. Starting from $0$, it a.s. returns to this point. So point $0$ is recurrent, and thus the Markov Chain is recurrent. Let $T_{0}^{+}$ be the first return time to $0$ starting from $0$. Then
\begin{equation}
\label{5-4}
\mathbb{E} [T_{0}^{+}] = \sum _{i} \mathbb{E} [T_{0}^{+} | \text{first jump is i}] p(\text{first jump is $i$})=\sum _{i} (i+1) . p_{0(i+1)}= \mathbb{E} [\eta] . 
\end{equation}
So if $\mathbb{E} [\eta]=\infty$, this Markov Chain is null recurrent, and hence in this case it is called the  \textbf{null recurrent Renewal Markov Chain}.\\
\end{definition}
The Doeblin Graph of the Renewal Markov Chain is as follows: the set of vertices of this random graph is $\Sigma=\mathbb{Z} \times \mathbb{N}$, and the driving sequence is $\xi_{t}^{n}, n\in \mathbb{N}$. For $i \ne 0$, vertex $(t,i)$ has a single outgoing edge which goes to the vertex $(t+1,h(i,\xi_{t}^{i}))$ with $h(i,\xi_{t}^{i})=i-1$ a.s. For $i=0$,  the outgoing edge from $(t,0)$  goes to vertex $(t+1,h(0,\xi_{t}^{0}))$  with $\mathbb{P}(h(i,\xi_{t}^{0})=j)=p_{0j}$ in \eqref{5-5}. The vertices of this graph, which are formed by the union of the trajectories starting from $(t,0)$, $t \in \mathbb{Z}$, form the Bridge graph with $s^{*}=0$. This graph is called the \textbf{Renewal Bridge Graph}. 

\section{Renewal Eternal Family Tree} 
\label{S2}
The Renewal Eternal Family Tree (EFT) is a random graph defined from Renewal Bridge Graph. 
\begin{definition}
\label{D01}
Consider the directed random graph $G^{\eta}=(V,E)$ on $\mathbb{Z}$, with vertices $V=\mathbb{Z}$. The set of edges, $E$, is as follows: at each vertex $i$, there is an edge to vertex $i+\eta_{i}$, where the random variables $\{\eta_{i}\}_{i\in \mathbb{Z}}$ are i.i.d. with $\eta_{i} \sim \eta$ defined in \eqref{5-0-0}. One can verify that this graph has no loops, and hence it is either a tree or a forest. If this graph is a tree, it has all the properties of an Eternal Family Tree as defined in \cite{Baccelli2018}. So $G^{\eta}$ is called  the \textbf{Renewal Eternal Family Forest (Renewal EFF)}. In the connected case, it is referred to as the \textbf{Renewal Eternal Family Tree (Renewal EFT)}.
\end{definition}
Below, it is assumed that $\mathbb{E}(\eta)= \infty$. Proposition \ref{T1} shows that both a tree and forest can arise. This proposition considers a specific distribution for $\eta$, satisfying the infinite mean property.  For this particular distribution, under certain conditions, $G^{\eta}$ is an EFT, and under others is a forest. 
\begin{proposition}
\label{T1}
Suppose that $\eta$ has following probability distribution 
\begin{equation}
\label{4-0}
P(\eta=j)= q_{j}=\frac {c_{1}}{j^{\alpha +1}} , \quad  0<\alpha \leq 1, j \geq 1,
\end{equation}
which gives the following tail distribution :
\begin{equation}
\label{4-0-1}
P(\eta>j)=\frac {c_{2}}{j ^{\alpha}} , \quad 0<\alpha \leq 1, j \geq 1, 
\end{equation}
where $c_{1}$ and $c_{2}$ are normalizing constants. Then the random graph constructed in Definition \ref{D01} is a.s. a Renewal EFT when $\alpha \geq \frac{1}{2}$ and a.s. a forest when $\alpha < \frac{1}{2}$. 
\end{proposition}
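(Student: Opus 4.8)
The plan is to reduce connectedness of $G^\eta$ to whether two independent renewal processes intersect, and then to locate the threshold through the asymptotics of the renewal mass function; it will sit exactly at the place where $2\alpha-1$ changes sign. \textbf{Step 1 (reduction).} First I would observe that, following edges, the forward path issued from a vertex $i$ visits $i,\ i+\eta_i,\ i+\eta_i+\eta_{i+\eta_i},\dots$; the visited indices are strictly increasing, so no $\eta_\bullet$ is reused, and this path is a renewal process with i.i.d.\ increments $\sim\eta$. For two distinct starting vertices the two forward paths use disjoint families of the $\eta_\bullet$'s until the first time (if ever) they occupy a common vertex, after which they coincide; hence the probability that the forward paths from $i$ and $i+1$ ever meet equals the probability $p$ that two \emph{independent} renewal processes, one started at $0$ and one at $1$, share a point. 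Since every connected component of $G^\eta$ is a one-ended tree, $G^\eta$ is connected iff the forward paths from $i$ and $i+1$ meet for \emph{every} $i\in\mathbb Z$; this event is invariant under the (ergodic) shift of the i.i.d.\ field $\{\eta_j\}_{j\in\mathbb Z}$, hence has probability $0$ or $1$, and by a countable union of null sets it has probability $1$ iff $p=1$. Thus $G^\eta$ is a.s.\ a tree when $p=1$ and a.s.\ a forest when $p<1$.

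\textbf{Step 2 (encoding $p$ by the renewal sequence).} Let $u_n=\mathbb P(n\in R)$ for $R$ a renewal process from $0$ with steps $\sim\eta$ (so $u_0=1$). Tracking the absolute difference of two ``merge-sort'' pointers run along the two processes gives a Markov chain $Y_{k+1}=|Y_k-\eta_{k+1}|$ on $\mathbb Z_{\ge0}$ (fresh i.i.d.\ copies $\eta_k\sim\eta$), absorbed at $0$, with $p=\mathbb P_1(Y\text{ hits }0)$. Writing $h(m)=\mathbb P_m(Y\text{ hits }0)$, one has $h(0)=1$ and $h(m)=\sum_{j\ge1}q_j\,h(|m-j|)$; since $q_j>0$ for all $j$, this identity forces $h(m_0)=1$ for some $m_0\ge1\ \Rightarrow\ h\equiv1$, so $p=1\iff h\equiv1$. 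On the other hand, for two independent renewals \emph{both} started at $0$, restarting at successive common points shows the number of common points is $1+G$ with $\mathbb P(G=k)=q^k(1-q)$, where $q=\mathbb P(\eta=\eta')+\sum_{d\ge1}\mathbb P(|\eta-\eta'|=d)\,h(d)$; hence $\sum_{n\ge0}u_n^2=\mathbb E[\#\{\text{common points}\}]=1/(1-q)$. If $h\equiv1$ then $q=1$ and $\sum_n u_n^2=\infty$; if $h(1)<1$ then, since $\mathbb P(|\eta-\eta'|=1)>0$, one gets $q<1$ and $\sum_n u_n^2<\infty$. Therefore $p=1\iff\sum_n u_n^2=\infty$.

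\textbf{Step 3 (renewal asymptotics and conclusion).} With $\mathbb P(\eta>j)\sim c_2\,j^{-\alpha}$, the strong renewal theorem (Erickson; Garsia--Lamperti) gives $u_n\sim\kappa_\alpha\,n^{\alpha-1}$ for $\alpha\in(0,1)$, while for $\alpha=1$ the infinite-mean renewal theorem gives $u_n\sim 1/m(n)\sim 1/(c_2\log n)$ with $m(n)=\sum_{k\le n}\mathbb P(\eta>k)$. Hence $\sum_n u_n^2=\infty$ iff $\alpha\ge\tfrac12$ and $\sum_n u_n^2<\infty$ iff $\alpha<\tfrac12$. Combining with Steps 1--2 yields that $G^\eta$ is a.s.\ a Renewal EFT when $\alpha\ge\tfrac12$ and a.s.\ a (disconnected) Renewal EFF when $\alpha<\tfrac12$.

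\textbf{Main obstacle.} The delicate input is the renewal estimate near the critical exponent $\alpha=\tfrac12$. For $\alpha\in(\tfrac12,1]$ one can bypass the strong renewal theorem: by Karamata's Tauberian theorem $U(n):=\sum_{k\le n}u_k$ is regularly varying of index $\alpha$ (with $U(n)\sim n/(c_2\log n)$ when $\alpha=1$), so Cauchy--Schwarz gives $\sum_{k\le n}u_k^2\ge U(n)^2/(n+1)\to\infty$ because $2\alpha-1>0$. But at $\alpha=\tfrac12$ this bound is only bounded, and since the strong renewal theorem can fail for general regularly varying tails of index $\le\tfrac12$, one genuinely needs $u_n\asymp n^{-1/2}$ from the version valid for the explicit power tail — concretely the Garsia--Lamperti condition at $\alpha=\tfrac12$, which for $q_j\asymp j^{-\alpha-1}$ reduces to $\sum_{j\le n}j\,q_j=O(\sqrt n)$ and so holds. (The upper bound $u_n=O(n^{\alpha-1})$ used in the case $\alpha<\tfrac12$ likewise comes from the strong renewal theorem.) A lighter but non-negligible point is the bookkeeping in Step 1: justifying that the two forward paths are genuinely independent renewals up to their meeting time, and that connectedness of $G^\eta$ is a shift-invariant — hence $0$--$1$ — event.
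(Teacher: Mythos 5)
Your proof is correct, but it decides the key dichotomy by a genuinely different route than the paper. The combinatorial reduction is the same in both: connectedness of $G^{\eta}$ is equivalent to the a.s.\ meeting of the forward paths from any two vertices, and the relevant object is the absorbed chain tracking the gap between two leap-frogging pointers (your $Y_{k+1}=|Y_k-\eta_{k+1}|$ is exactly the absolute value of the paper's one-sided antisymmetric oscillating random walk $Z_n$). Where you diverge is in deciding whether that chain hits $0$ a.s.: the paper cites Kemperman's recurrence/transience criterion for oscillating random walks (recurrent if the jump tail is $O(n^{-1/2})$, transient if $\mu(n)\sim cn^{-1-\epsilon}$ with $\epsilon<\tfrac12$), which settles the matter in two lines once the tail \eqref{4-0-1} is plugged in. You instead convert the question into the classical renewal-intersection criterion $p=1\iff\sum_n u_n^2=\infty$ via the geometric count of common renewal points, and then invoke the strong renewal theorem to get $u_n\asymp n^{\alpha-1}$. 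Your route is more transparent about \emph{why} the threshold sits at $\alpha=\tfrac12$ (square-summability of $n^{\alpha-1}$) and is more self-contained in its probabilistic bookkeeping — you supply the $0$--$1$ law and the dichotomy $h\equiv 1$ vs.\ $h<1$ explicitly, which the paper glosses over — but it pays for this by needing the delicate local form of the strong renewal theorem at and below $\alpha=\tfrac12$, where the SRT can fail for general regularly varying tails; you correctly note that the exact power-law form of $q_j$ in \eqref{4-0} guarantees the Garsia--Lamperti/Doney local condition, so the argument closes. The paper's reliance on Kemperman's lemma hides essentially the same analytic difficulty inside the citation. Both proofs are sound; yours trades one specialized external input (oscillating random walks) for another (the strong renewal theorem in the infinite-mean regime).
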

The proof of Proposition \ref{T1} is provided in Subsection \ref{Proof} of the appendix. 
For the remainder of the document, it is assumed that $\eta$ satisfies $\mathbb{E}(\eta)= \infty$, unless mentioned otherwise. 
\begin{remark}
The Renewal EFF is not limited to distribution \eqref{4-0}. This distribution is considered only for showing that both the Renewal EFT and the Renewal EFF exist when $\eta$ has an infinite mean.
\end{remark}
%\subsection{Unimodular networks}
%One of the main tools that we use in this paper is the \textit{unimodularity of random rooted network} in the sence of Aldous and Lyons \ref{Aldous}. We consider following definition form  \ref{Aldous}: A \textbf{network} is a (multi-)graph $G=(V,E)$  equipped with a complete separable metric space $	\Lambda$ called the \textbf{mark space} and maps from $V$ and $E$ to $	\Lambda$.Images in $\Lambda$ are called  \textbf{mark}.The networks are assumed to be locally finite. \\
%A \textbf{ray} in an infinite graph is a semi-infinite simple path. In the trees a ray is an infinite path that start at any vertex and does not backtrack. Two rays are  \textbf{equivalent} if they have infinitely many vertices in common. An equivalence class of rays is called an \textbf{end}. \\
%A \textbf{rooted network} $(G,o)$ is a network with a distinguished vertex $o$ called root. An  \textbf{isomorphism} between two network is a graph isomorphism that preserve the marks. An isomorphism of rooted network is a network isomorphism that takes the root of one to the root of other. We also could define the isomorphism of networks with two distinguished vertices $(G,o,v)$.  
%( I do not know whether this section is needed or not. I will finish it later !)
\subsection{Properties of the Renewal EFF}
Here are some properties of Renewal EFF to be used later. Proposition \ref{Pr1} studies the unimodular property of the Renewal EFF. For the definition and some examples of unimodular random networks, see \cite{Aldous}.
\begin{proposition} 
\label{Pr1}
The Renewal EFF is a unimodular network. 
\end {proposition}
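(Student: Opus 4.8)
The plan is to verify the Mass Transport Principle (MTP) directly, exploiting the fact that $G^{\eta}$ is built in a translation-equivariant way from the i.i.d.\ family $\{\eta_i\}_{i\in\mathbb{Z}}$ indexed by the vertex set $\mathbb{Z}$, and that $\mathbb{Z}$ is a unimodular (indeed amenable) group acting transitively on $V=\mathbb{Z}$. Concretely, I would take the root of the network to be the vertex $0$: since the law of $\{\eta_i\}$ is shift-invariant and $\mathbb{Z}$ is transitive, this is the canonical way to make $G^{\eta}$ into a random rooted network, and the content to be checked is that $(G^{\eta},0)$ satisfies the MTP (with edge orientations treated as marks, so that $G^{\eta}$ is a network in the sense of \cite{Aldous}).

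First I would record two preliminary facts. (i) $G^{\eta}$ is a.s.\ locally finite: every vertex has out-degree $1$, and the in-degree of vertex $i$ is $\#\{j<i:\eta_j=i-j\}$, a sum of independent indicators whose expectations sum to $\sum_{k\ge 1}\mathbb{P}(\eta=k)=1<\infty$, so by the first Borel--Cantelli lemma it is a.s.\ finite (simultaneously for all $i$). (ii) If $\theta_m$ denotes the shift $(\theta_m\eta)_i=\eta_{i+m}$, then the map $i\mapsto i+m$ is a graph isomorphism from $G^{\theta_m\eta}$ onto $G^{\eta}$, hence for any Borel function $f$ on doubly-rooted networks $f(G^{\theta_m\eta},0,k)=f(G^{\eta},m,k+m)$; since $\theta_m\eta\overset{d}{=}\eta$, taking expectations gives $\mathbb{E}\,f(G^{\eta},m,k+m)=\mathbb{E}\,f(G^{\eta},0,k)$ for every $m,k\in\mathbb{Z}$.

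The MTP then follows by a Fubini computation: for every nonnegative Borel $f$,
\[
\mathbb{E}\Big[\sum_{k\in\mathbb{Z}} f(G^{\eta},0,k)\Big]
=\sum_{k\in\mathbb{Z}}\mathbb{E}\,f(G^{\eta},0,k)
=\sum_{k\in\mathbb{Z}}\mathbb{E}\,f(G^{\eta},-k,0)
=\mathbb{E}\Big[\sum_{j\in\mathbb{Z}} f(G^{\eta},j,0)\Big],
\]
where the middle equality is fact (ii) with $m=-k$. This is exactly the Mass Transport Principle for $(G^{\eta},0)$, so the Renewal EFF is a unimodular network. If one prefers to state unimodularity for the connected component of the root (the Renewal EFT in the connected case), it suffices to restrict attention to test functions $f$ supported on pairs of vertices lying in the same component; the computation above is unchanged and yields the component-wise MTP.

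I do not expect a serious obstacle here; the only point requiring care is the bookkeeping in fact (ii) — being precise that $G^{\theta_m\eta}$ is a shift of $G^{\eta}$, so that stationarity of the mark field transfers to invariance of the isomorphism class of the rooted network — together with the observation that possible disconnectedness of $G^{\eta}$ is harmless, since the displayed identity holds for all nonnegative $f$ regardless of connectivity. Alternatively, the hands-on computation can be replaced by invoking the general fact (see \cite{Aldous}, \cite{Baccelli2017}) that an equivariant factor of an i.i.d.\ field indexed by the vertices of a Cayley graph of a unimodular group is a unimodular network, applied to $\mathbb{Z}$ with the i.i.d.\ marks $\{\eta_i\}$.
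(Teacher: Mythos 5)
Your proof is correct and rests on the same fact as the paper's: the Renewal EFF is an i.i.d.\ (hence shift-stationary) marking of $\mathbb{Z}$ rooted at $0$, and stationarity of the mark field is exactly what makes the Mass Transport Principle hold. The paper simply cites the general preservation result from \cite{Aldous} (adding i.i.d.\ marks to the unimodular deterministic line graph on $\mathbb{Z}$ rooted at $0$ yields a unimodular network), whereas you unwind that citation into the explicit Fubini/shift-invariance computation --- the very alternative you acknowledge in your closing sentence.
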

\begin{proof}
Let $(G,o)$ is be the deterministic graph with vertices $V=\mathbb{Z}$, and edge set, $E=\{(n,n+1); \forall n \in \mathbb{Z} \}$ that is rooted at  $0$. This is a unimodular graph.\\
For all unimodular networks, it is possible to enrich vertices and edges with i.i.d. marks and preserve unimodularity (See \cite{Aldous}). Since the Renewal EFF is a random graph constructed from i.i.d. marks added to $(G,0)$, it is a unimodular network. 
\end{proof}
The following proposition requires some more properties of random networks. Here is a brief review of these properties. For more details on these concepts, see \cite{Baccelli2017}. 
One can define a  \textbf{vertex shift} on any network $G=G(V,E)$. A vertex shift  $f_{G}$ is a function on the vertices,  $f_{G}: V \to V$, such that $f_{G}$ commutes with network isomorphisms, and such that the function $[G,o,s] \mapsto 1_{f_{G}(o)=s}$ is measurable on $\mathcal{G}_{**}$. Here, $\mathcal{G}_{**}$ denotes the set of isomorphism classes of rooted, connected, and locally finite networks with a pair of distinguished vertices. The \textbf{$f$-graph} of $G$ is the graph $G ^{f}= (V,E^{f})$, with the set of vertices $V$ and directed edges $E^{f} (G)= \{(x,f(x)); x \in V \}$. Each connected component of the $f$-graph is called an  \textbf{$f$-component} of the graph. Consider the following equivalence relation on $V$:
\begin{equation}
\label{foil}
x \sim _{f} y \iff 	\exists n \in \mathbb{N}; \quad f_{G} ^{n} (x) = f_{G} ^{n} (y).
\end{equation}
Every equivalence class of this equivalence relation is called a \textbf{foil}. The \textbf{Foil Classification in Unimodular Networks Theorem} (Theorem 3.10 in \cite{Baccelli2017}) states that in a unimodular network $(G,o)$, for all vertex shifts $f$, each connected component, $C$, of its $f$-graph belongs to one of the following classes:
\begin{enumerate}[label=\roman*]
\item Class $\mathcal{F/F}$:  $C$ and all its foils are finite, and there is a unique cycle in $C$.
\item Class $\mathcal{ I/F}$: $C$ is infinite but all its foils are finite, there are no cycles in
C, and there is a unique bi-infinite path in $C$.
\item Class $\mathcal{I}/\mathcal{I}$: $C$ is infinite, all its foils are infinite, and there are no cycles or bi-infinite paths in $C$.
\end{enumerate}
\begin{proposition}
\label{Pr2}
Let $G^{\eta}=G^{\eta}(V,E)$ be a Renewal EFF. Then each connected component of $G^{\eta}$ is $\mathcal{I}/\mathcal{I}$ in the sense of the foil classification theorem of unimodular networks.
\end {proposition}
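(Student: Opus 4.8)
The plan is to invoke the quoted Foil Classification Theorem for the vertex--shift $f=f_{G^\eta}$ defined by $f(i)=i+\eta_i$ (note that the $f$--graph of $G^\eta$ is $G^\eta$ itself, each vertex having out--degree one), and to exclude the classes $\mathcal F/\mathcal F$ and $\mathcal I/\mathcal F$; class $\mathcal I/\mathcal I$ is then forced. Excluding $\mathcal F/\mathcal F$ is immediate: since $\eta_i\ge 1$ a.s.\ we have $f(i)>i$, so $f^n(i)\uparrow\infty$ and $G^\eta$ contains no cycle; a finite connected component of an out--degree--one graph must contain a cycle, so every component of $G^\eta$ is infinite.

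The substantive step is to exclude $\mathcal I/\mathcal F$, i.e.\ to show that $G^\eta$ a.s.\ carries no bi--infinite $f$--path. A bi--infinite path $(v_k)_{k\in\mathbb Z}$ with $f(v_k)=v_{k+1}$ extends forward automatically (and $v_k\uparrow\infty$), so such a path through a vertex $v$ exists iff $v$ admits an infinite descending lineage of $f$--preimages, iff the descendant subtree $\mathrm{Desc}(v)=\{u:\ f^m(u)=v\ \text{for some }m\ge0\}$ is infinite. First I would note that $\mathrm{Desc}(v)$ is a.s.\ locally finite: the children of $q$ are $\{q-k:\ \eta_{q-k}=k\}$ and $\sum_{k\ge1}\mathbb P(\eta_{q-k}=k)=\sum_k p_k=1<\infty$, so Borel--Cantelli plus a countable union give that a.s.\ every vertex has finitely many children. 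By König's lemma it is then enough to prove that for one fixed vertex, say $0$, the tree $\mathrm{Desc}(0)$ is a.s.\ finite.

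This is the crux. The point is that $\mathrm{Desc}(0)$ is a \emph{critical} branching object: since $\eta_q$ is independent of $\sigma(\eta_p:p<q)$ while $\mathrm{Desc}(q)$ is $\sigma(\eta_p:p<q)$--measurable, an induction gives $\mathbb E\big[\#\{\text{descendants of }0\text{ at graph distance }n\}\big]=1$ for every $n$, and $\mathbb P(0\text{ has no child})=\prod_{k\ge1}(1-p_k)\in(0,1)$ (the product is positive because $\sum_k p_k<\infty$, and each $p_k<1$ since $\mathbb E\eta=\infty$). I would deduce a.s.\ finiteness either by a stochastic--domination coupling of $\mathrm{Desc}(0)$ with a non--degenerate critical Galton--Watson tree, or by the equivalent renewal estimate: on $\{\mathrm{Desc}(0)\text{ infinite}\}$ the forward orbit $\{f^n(0)\}_{n\ge 0}$ is, conditionally (the conditioning involving only $(\eta_p)_{p<0}$), an ordinary renewal process with inter--arrival law $\eta$, hence of density $0$ because $\mathbb E\eta=\infty$. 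A particularly clean instance of the latter arises when $G^\eta$ is connected: the bi--infinite path is then unique, so the set $\mathcal B$ of its vertices is a stationary point process on $\mathbb Z$ of some intensity $\rho$; if $\rho>0$ then, using Proposition \ref{Pr1} and Palm calculus, $\tfrac1\rho=\mathbb E^0[\text{next point of }\mathcal B]=\mathbb E^0[f(0)]=\mathbb E^0[\eta_0]=\mathbb E[\eta]=\infty$, a contradiction, so $\mathcal B=\emptyset$; in the general (possibly disconnected) case one falls back on the branching/renewal comparison.

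Once every $\mathrm{Desc}(v)$ is known to be a.s.\ finite, König's lemma gives that $G^\eta$ has no bi--infinite path, hence no component is $\mathcal I/\mathcal F$; together with the first paragraph the trichotomy forces every component of $G^\eta$ to be of class $\mathcal I/\mathcal I$, which in particular also shows that all its foils are infinite. The main obstacle is precisely the a.s.\ finiteness of $\mathrm{Desc}(0)$: the offspring counts along the tree are mean--one but not independent (one variable $\eta_{q-k}$ feeds several candidate parent--child relations), so turning the first--moment identity into a.s.\ extinction requires the coupling or the renewal argument above rather than a plain computation; everything else is soft.
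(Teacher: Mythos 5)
Your reduction (no cycles, hence every component is infinite; local finiteness of the descendant trees via Borel--Cantelli; K\"onig's lemma to identify ``some component is $\mathcal I/\mathcal F$'' with ``some vertex has an infinite descendant tree'') is sound, and your ``clean instance'' for the connected case is exactly the paper's argument in different clothing: assuming the tree is $\mathcal I/\mathcal F$, the classification theorem gives a \emph{unique} bi-infinite path, which is therefore a covariant subset of positive density; the paper's mass transport with the function $g$ is precisely your Palm inversion $1/\rho=\mathbb E^0[\text{forward gap}]$, and the contradiction comes, as you say, from the gap being a fresh copy of $\eta$ (independence of $\{0\in\mathcal B\}$ from $\eta_0$). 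Up to that point you and the paper agree.

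The genuine gap is the disconnected case, which you delegate to an unproven ``branching/renewal comparison.'' Neither of your two fallbacks closes it. The stochastic domination of $\mathrm{Desc}(0)$ by a non-degenerate critical Galton--Watson tree is asserted, not constructed; the offspring indicators $1_{\{\eta_{q-k}=k\}}$ of different potential parents are dependent (they compete for the same children), and the first-moment identity $\mathbb E[D_n]=1$ is genuinely insufficient, since $D_n\equiv 1$ (a single bi-infinite path) satisfies it. The renewal-density remark fares no better: the forward orbit of a single vertex indeed has density $0$, but the set $\mathcal B$ of vertices with infinite descendant trees is, in a forest, a union of one bi-infinite $f$-orbit \emph{per component}, and a union of infinitely many density-zero orbits can have positive density. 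Your Palm bound then only controls the gap of $\mathcal B$ as a whole, which is at most $\eta_0$ (another component's path may pass between $0$ and $f(0)$), so no contradiction with $\mathbb E[\eta]=\infty$ is obtained. The paper closes exactly this case differently: if at least two bi-infinite paths exist, their forward evolutions depend only on $(\eta_p)_{p\ge 0}$, and since $\gcd(A)=1$ they meet with positive probability; this puts two distinct bi-infinite paths inside one component, contradicting the uniqueness clause of the $\mathcal I/\mathcal F$ class. Note that this step uses the standing aperiodicity assumption on $\eta$, which your proposal never invokes; without some such merging (or a genuinely new argument), the possibility of infinitely many components each carrying its own bi-infinite path is not excluded by your outline.
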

\begin{proof}
Consider the vertex shift $f$ on $G^{\eta}$, which maps each vertex to its right adjacent vertex. Each connected component of the Renewal EFF is an infinite tree, so it is either in the $\mathcal{I}/\mathcal{I}$ class or the $\mathcal{I}/\mathcal{F}$ class of the foil classification theorem. First, suppose that $G^{\eta}$ is connected and that it is $\mathcal{I}/\mathcal{F}$. It follows that there is a unique bi-infinite $f$-path, $\mathcal{P}$, in this component (see Theorem $3.10$ in \cite{Baccelli2017}). Since $\mathcal{P}$ is unique, it is distinguishable in the whole graph. So it is a covariant subgraph of $G^{\eta}$. Using Lemma $2.8$ of \cite{Baccelli2017}, $\mathcal{P}$ has a positive density in $\mathbb{Z}$. In the sense that $\mathbb{P}(0 \in V_{\mathcal{P}})>0$, where $V_{\mathcal{P}}$ is the set of vertices of $\mathcal{P}$. Consider the measurable function $g$ defined as follows: $g[G^{\eta},x,y]\equiv0$, if there is no bi-infinite path in $G^{\eta}$. When the bi -infinite path $\mathcal{P}$ exists:
\begin{itemize}
\item $g[G^{\eta},x,y]=1$, if $x,y$ are two consecutive vertices in $\mathcal{P}$ such that $x<y$,
\item $g[G^{\eta},x,y]=1$ , if $x \notin V_{\mathcal{P}}$, and $y$ is the nearest vertex to the left of $x$ that belongs to $V_{\mathcal{P}}$, and
\item $g[G^{\eta},x,y]=0$, otherwise.
\end{itemize}
Using the mass transport principle, one can write: 
\begin {equation}
\label{Pr2-1}
\mathbb{E}\left[ \sum _{x\in\mathbb{Z}} g[G^{\eta},0,x]\right]=  \mathbb{E}\left[ \sum _{x\in\mathbb{Z}} g[G^{\eta},x,0]\right].
\end{equation}
The left-hand side of equation \eqref{Pr2-1} is equal to the probability of the existence of $\mathcal{P}$ in $G^{\eta}$, and the right-hand side of the equation is equal to 
\begin {equation}
\label{Pr2-2}
\mathbb{P}(0 \in V_{\mathcal{P}}). \mathbb{E}[\text{ number of the vertices between $0$ and its right neighbor in $\mathcal{P}$} | 0 \in V_{\mathcal{P}}].
\end{equation}
Since $\mathbb{P}(0 \in V_{\mathcal{P}})>0$, the equality of \eqref{Pr2-1} and \eqref{Pr2-2} gives that the expectation of the number of the vertices between $0$ and its right vertex in $V_{\mathcal{P}}$ given that $0 \in V_{\mathcal{P}}$ is finite. \\
On the other hand, note that the existence of a bi-infinite path is a property of the left-hand side of the graph. In the sense that if there exists a path that comes from $-\infty$ and reaches zero, it is bi-infinite. The path on the right-hand side of $0$ is ``fresh'', and the distribution of the length of $\mathcal{P}$ edges on this side is the same as $\eta$. So the distance between $0$ and its right neighbor in $\mathcal{P}$ has an infinite mean, while it is shown above that this expectation is finite. Thus this path does not exist. Hence, the tree belongs to the $\mathcal{I}/\mathcal{I}$  class.\\
Consider now the case where $G^{\eta}$ is not connected. Suppose only one bi-infinite path exists in the graph. Then this path is again a covariant subgraph of $G^{\eta}$, i.e., with positive probability, zero belongs to this path, and the same argument as in the EFT case shows that it is impossible. So either there is no bi-infinite path in the graph, or there is more than one bi-infinite path. Suppose that the latter case happens. The variables $\eta_{i}, i<0$ determine the number of bi-infinite paths in the EFF. Let $\mathcal{P}_{1}$ and $\mathcal{P}_{2}$ be two bi-infinite paths that come from $-\infty$ and reach time zero. Since after $0$, these two paths do not depend on the past, and the $gcd$ of $A$ in \eqref{A} is equal to one, these two paths meet each other with positive probability. So there is more than one bi-infinite path in one connected component of the EFF with positive probability, which is impossible due to the foil classification theorem of unimodular networks.
\end{proof}

Rephrased in terms of the classification of unimodular EFTs, the last result
complements the known fact that 
the renewal EFT is ${\mathcal{I/F}}$ in the
case where the renewal distribution has finite mean, by showing 
that it is ${\mathcal{I/I}}$ when this mean is infinite.

\section{Properties of the Renewal Bridge Graph}
\label{S4}
\subsection{Basic Properties}
In the Renewal MC, the transition probabilities from zero are the same as the jumps distribution in the Renewal EFT (EFF). Using this, one can define a coupling between the Renewal EFT (EFF) and the Renewal Bridge Graph. 
\begin{definition}
In the Renewal Bridge Graph, consider $e_{t}$, the outgoing edge
\footnote{In the Bridge Graph (and in the EFF), there is a  natural direction for the edges from
time $t$ to time $t + 1$. With this direction, each edge has a beginning vertex and an end vertex.}
at vertex $(t,s^{*})$. Let $(t+1, s')$ be the end of edge $e_{t}$.
Then the jump at time $t$ is defined by $|s'-s^{*}|$. 
\end{definition}
Let $ \{\eta_{i}\}_{i \in \mathbb{Z}}$ be the length of the outgoing edge at vertex $i$ in the Renewal EFT, and $ \{\eta'_{i}\}_{i \in \mathbb{Z}}$ be the jump at time $i$ in the Renewal Bridge Graph. Then
\begin{equation*}
\eta_{i} \sim \eta'_{i}+1        \quad \forall  i .
\end{equation*}
The coupling between $(\eta_{i},\eta'_{i})$ is defined by taking 
\begin{equation}
\label{Co}
{\eta}_{i} ={\eta'}_{i}+1 \quad \forall i \quad \text{a.s.}
\end{equation}
\begin{figure}[h]
\includegraphics[width=\textwidth]{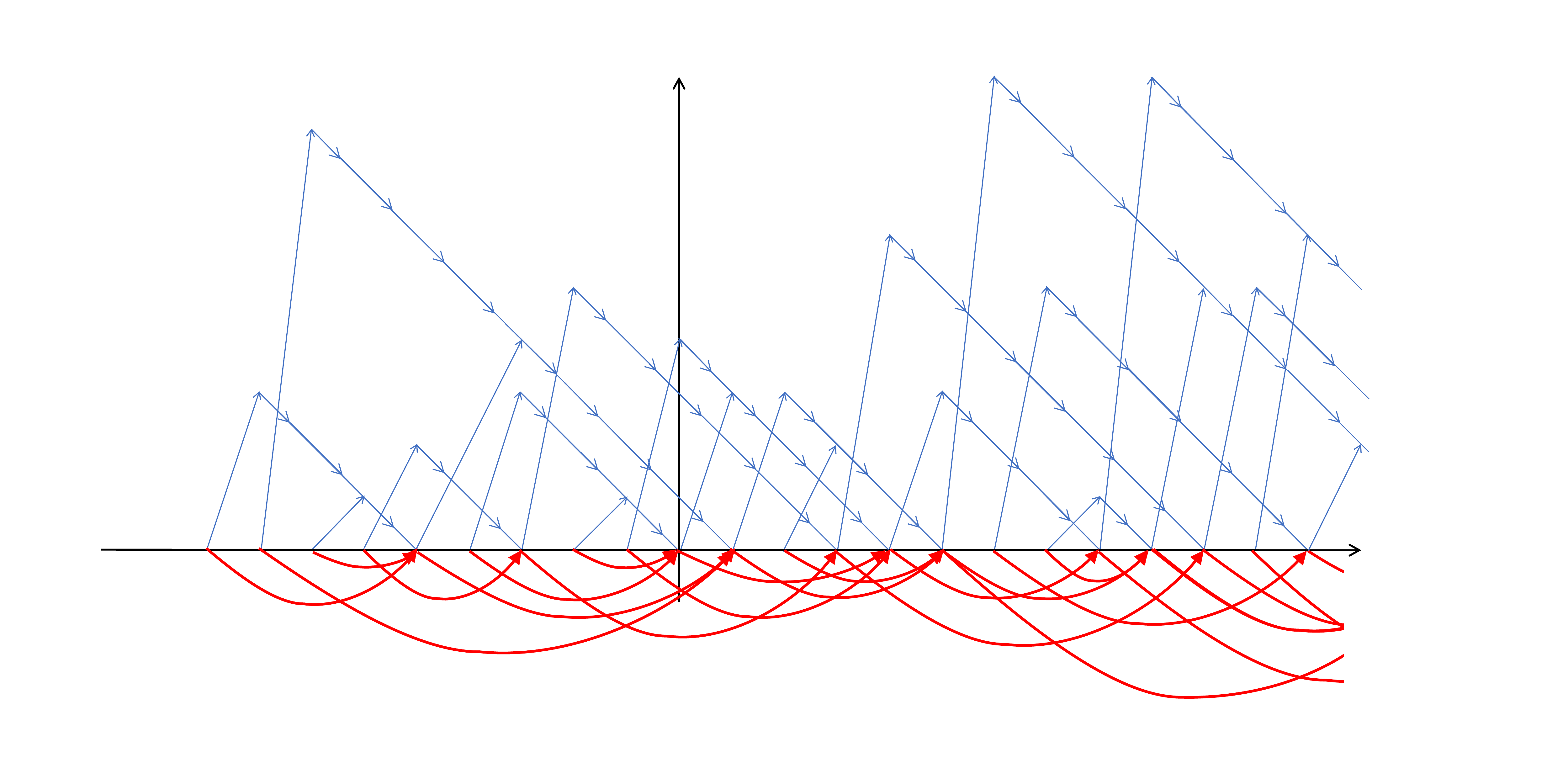}
\caption{Coupling between Renewal EFT and Renewal Bridge Graph}
\label{Pic2}
\centering
\end{figure}
\begin{proposition}
\label{Pr3}
The null recurrent Renewal Bridge Graph is either a tree or a forest. Both cases can happen, i.e., there are examples where the Renewal Bridge Graph is a tree and examples where it is a forest.
\end{proposition}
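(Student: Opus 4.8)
The plan is to reduce the statement to the corresponding dichotomy already established for the Renewal EFF in Proposition~\ref{T1}, transported to the Bridge Graph through the coupling \eqref{Co}. First, the "tree or forest" alternative itself: the Renewal Bridge Graph $B_X$ is a subgraph of the Doeblin Graph, in which every vertex has exactly one outgoing edge and every edge raises the time coordinate by one. Hence any cycle in the underlying undirected graph would produce a directed cycle, which is impossible, so each connected component of $B_X$ is a tree, and $B_X$ is a tree when connected and a forest otherwise. (This is exactly the observation made for $G^{\eta}$ in Definition~\ref{D01}.) So it remains to see that each case actually occurs.

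The key step is to match the connected components of $B_X$ with those of the coupled Renewal EFF $G^{\eta}$. Starting from a time-axis vertex $(t,0)$, the trajectory in $B_X$ first jumps to $(t+1,\eta'_t)$ and then decreases deterministically by one at each step, so it visits $(t+s,\eta_t-s)$ for $s=1,\dots,\eta_t$, where $\eta_t=\eta'_t+1$ as in \eqref{Co}; in particular it returns to the time axis exactly at $(t+\eta_t,0)$. Call this edge-path the \emph{descent chain} issued from $(t,0)$; both of its endpoints lie on the time axis, all interior vertices have state $\ge 1$, and every vertex of $B_X$ lies on some descent chain. Writing $w=t+\eta_t$, the descent chain from $(t,0)$ consists of $\{(t,0)\}\cup\{(w-i,i):1\le i\le\eta_t-1\}\cup\{(w,0)\}$, so two descent chains issued from $(t,0)$ and $(t',0)$ with $t<t'$ share a vertex if and only if $t+\eta_t=t'+\eta_{t'}$, in which case they coalesce into a common tail ending at the time-axis vertex $(t+\eta_t,0)$. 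Contracting every descent chain to a single edge therefore turns $B_X$ into the graph on $\mathbb{Z}$ with an edge from $t$ to $t+\eta_t$, which is precisely $G^{\eta}$, and two time-axis vertices of $B_X$ lie in the same component of $B_X$ if and only if the corresponding integers lie in the same component of $G^{\eta}$. Since every vertex of $B_X$ is connected inside $B_X$ to a time-axis vertex, $B_X$ is connected (hence a tree) if and only if $G^{\eta}$ is, and $B_X$ is a forest if and only if $G^{\eta}$ is.

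Finally, both cases occur. Take $\eta$ with the power-law law \eqref{4-0}: then $\mathbb{E}[\eta]=\infty$ exactly for $0<\alpha\le1$, and the $\gcd$ condition holds since $q_j>0$ for all $j\ge1$, so the associated Renewal Markov Chain is irreducible, aperiodic and null recurrent. By Proposition~\ref{T1}, $G^{\eta}$ is a.s.\ a tree when $\alpha\ge\frac12$ and a.s.\ a forest when $\alpha<\frac12$; by the reduction of the previous paragraph, the null recurrent Renewal Bridge Graph is a.s.\ a tree when $\frac12\le\alpha\le1$ and a.s.\ a forest when $0<\alpha<\frac12$, which proves the claim. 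I expect the only delicate point to be the reduction step: one must check carefully that the descent chains tile $B_X$ and interact only through the coalescence described above, so that contracting them really recovers $G^{\eta}$ at the level of connected components (the identity $u+i=t+\eta_t=t'+\eta_{t'}$ for a common interior vertex $(u,i)$ is what pins this down), together with the routine measurability of the coupling \eqref{Co}.
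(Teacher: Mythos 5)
Your proposal is correct and follows essentially the same route as the paper: acyclicity (one outgoing edge per vertex, edges advancing time) gives the tree-or-forest alternative, the coupling \eqref{Co} identifies connectivity of the Renewal Bridge Graph with connectivity of the Renewal EFF, and Proposition \ref{T1} supplies examples of both cases. Your descent-chain bookkeeping merely makes explicit the correspondence that the paper states more informally (two paths meet in the Bridge Graph iff the corresponding paths meet in the EFF), so there is no substantive difference.
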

\begin{proof}
In the Bridge Graph, there is only one outgoing edge from each vertex. Also, the edges are just going forward in time. So there is no cycle. Hence the Bridge Graph is either a tree or a forest. It remains to show both cases are possible. This is because, given the coupling \eqref{Co}, by the following argument, the connectedness of the Renewal Bridge Graph and the Renewal EFT(EFF) are equivalent.\\
Suppose that in the Renewal EFT, there is an edge from vertex $t$ to vertex $t'=t+j$, where $j$ is the value of $\eta_{t}$. Correspondingly, using the coupling defined in \eqref{Co}, in the Renewal Bridge Graph, there is an edge $\eta'_{i}=\eta_{i}-1=j-1$ between the vertices $(t,0)$ and $(t+1,j-1)$. Due to the construction of the Renewal Bridge Graph, it has a decreasing path from vertex $(t+1,j-1)$ to vertex $(t+j,0)$. It means that there is a path in the Bridge Graph starting from vertex $(t,0)$ in the time axis and back to the time axis for the first time again at vertex $(t+j,0)$. \\
%So, with this coupling, when there is an edge between two vertices $t$ and $t+j$ in the Renewal EFT (EFF), there is a path in the Renewal Bridge Graph starts at $(t,0)$ and ends at the vertex $(t+j,0)$.
So if two paths in the Renewal EFT (EFF) starting from two different vertices in the Renewal EFT (EFF) meet each other at a given time, the paths starting from the corresponding vertices in the Renewal Bridge Graph meet each other and vice versa. See Figure \ref{f1}. Thus a Renewal Bridge graph is a tree if and only if its corresponding Renewal EFT is a tree.
Then the result follows from Proposition \ref{T1}.
 \end{proof}
 \begin{figure}
\caption{Equivalence of  connectedness of the Renewal Bridge Graph and Renewal EFT }
\includegraphics[width=\textwidth]{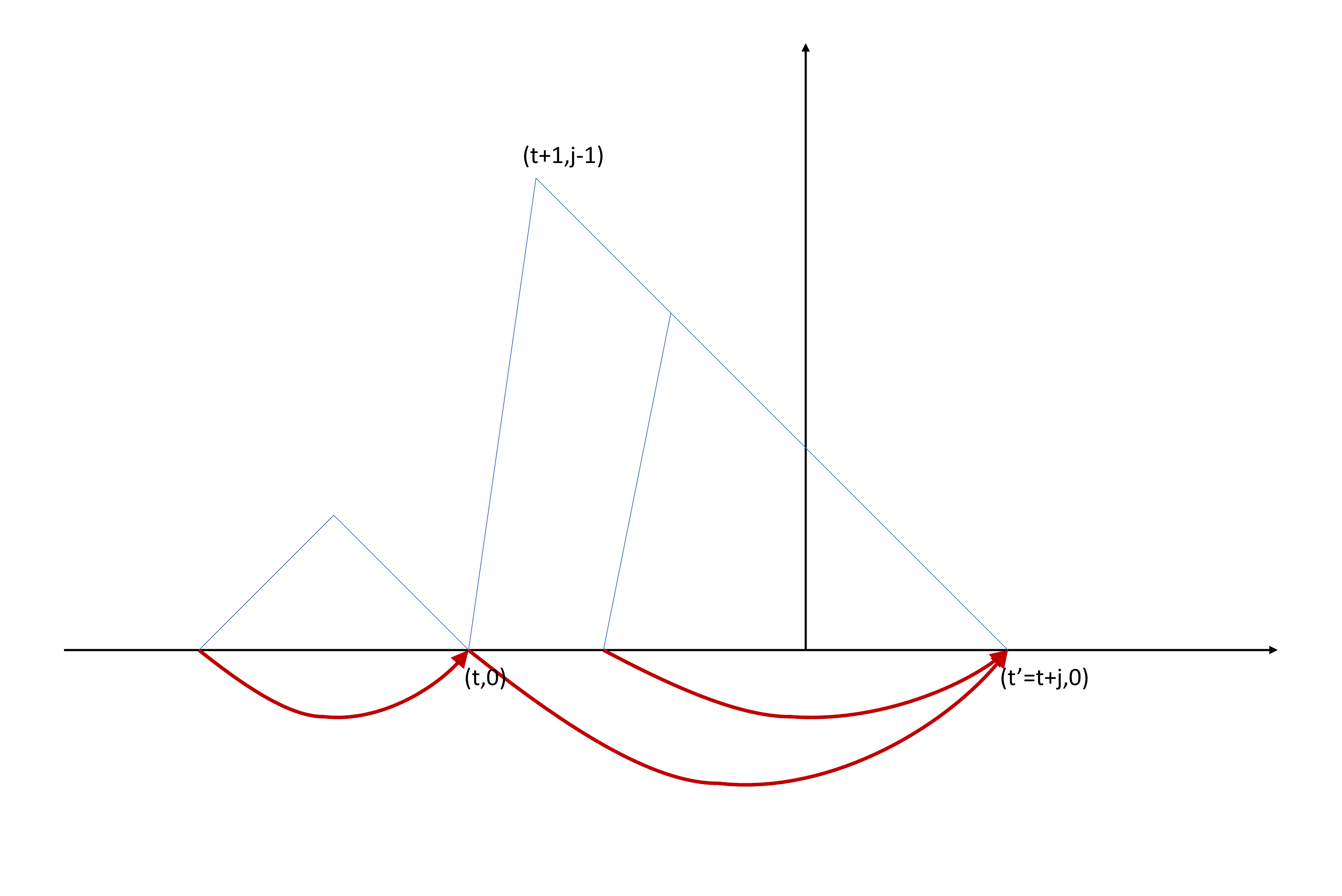}
\label{f1} 
\end{figure}
\begin{proposition}
\label{Pr3-4}
Every bi-infinite path, $\{\beta_{t}\}_{t\in\mathbb{Z}}$, in the Renewal Bridge Graph, $B_{X}$, corresponds to a bi-infinite path in its associated Renewal EFF. 
\end{proposition}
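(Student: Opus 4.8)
The plan is to read off, from any bi-infinite path of $B_X$, the random set of times at which the path lies on the time axis $\mathbb{Z}\times\{0\}$, to show this set is bi-infinite, and then to invoke the coupling \eqref{Co} to turn it into a bi-infinite path of the Renewal EFF. I would first record the deterministic skeleton of a bi-infinite path $\{\beta_t\}_{t\in\mathbb{Z}}$ in $B_X$: since every edge of the Doeblin graph runs from time $t$ to time $t+1$, the path has the form $\beta_t=(t,b_t)$ with $b_t\in\mathbb{N}$, and the Renewal transition rule forces $b_{t+1}=b_t-1$ as soon as $b_t\neq 0$. Hence the forward statement is immediate and deterministic: from any time $t$ the coordinate $b_s$ strictly decreases while it is positive, so the path returns to $0$ within $b_t$ steps, and therefore hits $0$ infinitely often as $t\to+\infty$.

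The backward direction is the crux. Assume, for contradiction, that with positive probability there is a bi-infinite path of $B_X$ visiting $0$ only finitely often as $t\to-\infty$; on this event let $t_0=\min\{t:b_t=0\}$. For $t<t_0$ we have $b_t\neq 0$, so the decrease rule gives $b_{t_0-k}=k$ for all $k\ge 0$; thus the path contains the ascending ray $\{(t_0-k,k):k\ge 1\}$, all of whose vertices lie on the anti-diagonal $\{(t,m):t+m=t_0\}$. Now one checks from the construction of the Renewal Bridge Graph that a vertex $(t,m)$ with $m\ge 1$ belongs to $B_X$ only if there is a time $a\le t-1$ with $a+\eta_a=t+m$, where $\eta_a=\eta'_a+1$ is the $a$-th jump as in \eqref{Co} (such an $a$ being the last time before $t$ that the descending trajectory through $(t,m)$ touched the axis). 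Applying this to $(t_0-k,k)$ for each $k\ge 1$ produces a time $a_k\le t_0-k-1$ with $a_k+\eta_{a_k}=t_0$; since $a_k\to-\infty$, the set $\{a\in\mathbb{Z}:a+\eta_a=t_0\}$ is infinite.

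To close the argument I would rule this out by a first Borel--Cantelli estimate: for any fixed integer $c$, $\sum_{a\in\mathbb{Z}}\mathbb{P}(a+\eta_a=c)=\sum_{n\ge 1}\mathbb{P}(\eta=n)=1<\infty$, so a.s.\ only finitely many $a$ satisfy $a+\eta_a=c$, and taking the countable union over $c\in\mathbb{Z}$ this holds for all $c$ simultaneously. Since $t_0$ is an integer, this contradicts the previous step, proving that a.s.\ every bi-infinite path of $B_X$ also visits $0$ infinitely often as $t\to-\infty$. Finally, listing the axis-visit times as $\cdots<s_{-1}<s_0<s_1<\cdots$, the portion of the path between $s_k$ and $s_{k+1}$ is exactly a jump of size $\eta'_{s_k}$ from $(s_k,0)$ followed by the deterministic descent back to the axis, which reaches $0$ at time $s_k+1+\eta'_{s_k}=s_k+\eta_{s_k}$; hence $s_{k+1}=s_k+\eta_{s_k}$, which is precisely the edge leaving $s_k$ in the Renewal EFF. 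Thus $\{s_k\}_{k\in\mathbb{Z}}$ is a bi-infinite path of the Renewal EFF.

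The main obstacle is the backward direction — specifically, translating ``the path never meets the axis again going backward'' into the statement that some fixed anti-diagonal is hit infinitely often by the jump process $(a\mapsto a+\eta_a)$, which the Borel--Cantelli bound then forbids. Everything else (the deterministic descent structure, the forward recurrence to the axis, and the final identification with an EFF path) is routine bookkeeping with the coupling \eqref{Co}.
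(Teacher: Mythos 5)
Your proposal is correct and follows essentially the same route as the paper: the paper's set $D$ of times whose first return to the axis lands exactly at $T$ is precisely your set $\{a : a+\eta_a = t_0\}$, and both arguments combine the Borel--Cantelli bound $\sum_a \mathbb{P}(a+\eta_a=c)=\sum_n \mathbb{P}(\eta=n)=1<\infty$ with the observation that a backward path avoiding the axis must be the anti-diagonal $\{(t,T-t)\}_{t\le T}$, each of whose vertices forces another element of $D$. Your write-up is somewhat more explicit (the deterministic descent for the forward direction, the uniformity over all $c$, and the final identification of axis-visit times with an EFF path via the coupling \eqref{Co}), but the underlying idea is the same.
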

\begin{proof}
The proof consists in proving that, every bi-infinite path $\{\beta_{t}\}_{t\in\mathbb{Z}}$, in the Renewal Bridge Graph, is bi-recurrent, i.e., it meets the time axis in both the positive and negative parts a.s., infinitely many times. Since the MC $X$ is recurrent, every bi-infinite path meets the positive part of the time axis a.s. infinitely many times. So it is enough to show that it meets time axis infinitely many times in the negative part. \\
let  $T$ be an arbitrary element of the time axis. Consider the following set in the Renewal Bridge Graph:\\

$D=$\{$T-t; $ the path that starts from $(T-t,s^{*})$ goes back to the time axis for the first time at time $T$\}. \\

For a fix $t$, the probability that $T-t \in D$ equals the probability that at time $T-t$, the jump is equal to $t-1$, i.e. $p_{t-1}$. Since $\sum_{t=1}^{\infty}p_{t}= 1$, one can conclude,  using the Borel-Cantelli lemma, that the cardinality of $D$ is a.s. finite. On the other hand, suppose there exists an infinite path, $\{\beta_{t}\}_{t\leq T}$, in the Renewal Bridge Graph that comes from $(-\infty,+\infty)$ and reaches the time axis for the first time at $T$. Hence
\begin{equation}
\label{333-444}
\{\beta_{t}\}_{t\leq T}=\{(t,T-t)\}_{t\leq T}.
\end{equation}
Since $\{\beta_{t}\}_{t\leq T}$ is a path in the Renewal Bridge Graph, every vertex in this path has a backtrack to the time axis. It means that there exist infinitely many edges starting from the time axis and ending up at $\{\beta_{t}\}_{t\leq T}$. Note that \eqref{333-444} gives that the probability that this happen is equal to the probability that $D$ be infinite, which is equal to zero. So a.s., in the Renewal Bridge Graph, there is no bi-infinite path that comes from $(-\infty,+\infty)$ and reaches the time axis for the first time at some $T$. So every bi-infinite path in the Bridge graph is bi-recurrent. 
\end{proof}
\begin{proposition}
\label{Pr4}
The Renewal Bridge Graph has no bi-infinite path.
\end{proposition}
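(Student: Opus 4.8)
The plan is to obtain this as an immediate consequence of Propositions \ref{Pr3-4} and \ref{Pr2}. First I would argue by contradiction: suppose that, with positive probability, the Renewal Bridge Graph $B_X$ contains a bi-infinite path $\{\beta_t\}_{t\in\mathbb{Z}}$. By Proposition \ref{Pr3-4}, any such path is bi-recurrent, and moreover it induces a bi-infinite path in the associated Renewal EFF: reading off the successive times at which $\{\beta_t\}$ returns to the time axis $\mathbb{Z}\times\{s^*\}$ and translating them through the coupling \eqref{Co} produces a sequence of EFF vertices in which consecutive vertices are joined by an edge, and this sequence is genuinely bi-infinite precisely because $\{\beta_t\}$ visits the time axis infinitely often in both time directions.

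Next I would note that a bi-infinite path in the Renewal EFF necessarily lies within a single connected component of $G^\eta$. By Proposition \ref{Pr2}, every connected component of the Renewal EFF is of class $\mathcal{I}/\mathcal{I}$ in the foil classification of unimodular networks, and by item (iii) of that classification a component of class $\mathcal{I}/\mathcal{I}$ contains no bi-infinite path. This contradicts the existence of the induced EFF path, so no bi-infinite path can exist in $B_X$. Since Proposition \ref{Pr2} applies to all connected components of $G^\eta$, the conclusion holds whether the Renewal Bridge Graph is a tree or a forest.

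I expect the only delicate point to be making the path-level correspondence of Proposition \ref{Pr3-4} fully explicit — that is, checking that consecutive return vertices of $\{\beta_t\}$ to the time axis are indeed EFF-adjacent under \eqref{Co}, and that bi-recurrence of $\{\beta_t\}$ (rather than mere bi-infiniteness) is what guarantees the image is a bi-infinite EFF path rather than a one-sided ray. Once that step is cleanly in place, the remainder is a one-line contradiction with the $\mathcal{I}/\mathcal{I}$ classification of Proposition \ref{Pr2}.
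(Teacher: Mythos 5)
Your proposal is correct and follows essentially the same route as the paper: both reduce the statement to Proposition \ref{Pr3-4} (bi-recurrence and the induced bi-infinite path in the Renewal EFF) and then derive a contradiction with the $\mathcal{I}/\mathcal{I}$ classification of Proposition \ref{Pr2}. The only cosmetic difference is that the paper phrases the contradiction as some vertex having infinitely many descendants, while you invoke directly the absence of bi-infinite paths in class $\mathcal{I}/\mathcal{I}$; these are equivalent uses of the same classification.
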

\begin{proof}
Proposition \ref{Pr3-4} states that every bi-infinite path in the Renewal Bridge Graph is bi-recurrent. So, if there is a bi-infinite path in the Renewal Bridge Graph, some vertices in this bi-infinite path have infinitely many descendants in the time axis. It means that correspondingly some vertices in the Renewal EFF have infinitely many descendants, which contradicts the fact that every connected component of EFF is $\mathcal{I/I}$, as shown by Proposition \ref{Pr2}.
\end{proof}
Some further definitions on unimodularizability of a random network are needed. The following definition borrowed from \cite{ALI}:\\
%Let $\mathcal{G}$ ($\mathcal{G}^{*}$) denote the isomorphism class of connected and locally finite (rooted) network. The invariant sigma field $I$ on $\mathcal{G}^{*}$ is the family of events in $\mathcal{G}^{*}$ that are invariant under a root change. 
\begin{definition}
\label{D5-4}
%Let $[G,o]$ and $[G',o']$ be two random rooted networks. Then $[G',o']$ is \textbf{unroot-equivalent} to $[G,o]$, if by forgetting the roots, the random non-rooted networks $[G]$ and $[G']$ have the same distribution on $\mathcal{G}$, i.e., the distributions of $[G,o]$ and $[G',o']$ agree on the invariant sigma-field $I$. Similarly, one can define the notion of unroot-equivalent between a non-rooted graph and a rooted graph. More precisely, 
Let $[G]$ be a non-rooted random network. A random rooted network $[G',o']$  is \textbf{unroot-equivalent} to $[G]$ if, by forgetting the root, the distribution of $[G']$ is identical to the distribution of [G]. A random network $[G]$ is said \textbf{unimodularizable} if there exists a unimodular random rooted network $[G',o']$ which is unrooted-equivalent to[G].\\
Similarly, one can define the notion of unroot-equivalence between two random rooted networks.
\end{definition}
\begin{proposition}
\label{Pr5}
The null recurrent Renewal Bridge Graph is not unimodularizable in general.
 \end{proposition}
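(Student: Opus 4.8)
\emph{Proof proposal.} The plan is to treat the connected case — which, by Proposition~\ref{T1}, does occur for the renewal distributions \eqref{4-0} with $\tfrac12\le\alpha\le 1$, all of which have $\mathbb{E}[\eta]=\infty$ — and to reach a contradiction via the Mass Transport Principle. The heuristic is that in any unimodular re-rooting the time axis $\mathbb{Z}\times\{0\}$ would have positive intensity, while each of its vertices turns out to be ``responsible for'' infinitely many vertices of $B_X$ on average, which is incompatible with unimodularity.

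I would first attach to $B_X$ three covariant objects. First, the set $W$ of time-axis vertices $\mathbb{Z}\times\{0\}$ (the $S$-set, i.e.\ the regeneration points), which is intrinsic to the isomorphism class of $B_X$; via the coupling of Section~\ref{S4} (relation \eqref{Co}), the regeneration tree induced on $W$, each edge marked by its length, is distributed as the Renewal EFT $G^{\eta}$, which is unimodular by Proposition~\ref{Pr1} and, being generated by i.i.d.\ marks on $\mathbb{Z}$, is ergodic, hence admits a \emph{unique} unimodularization $[G^{\eta},o]$. Second, the covariant map $\phi\colon V(B_X)\to W$ sending a vertex to the first time-axis vertex on its outgoing path, explicitly $\phi(t,h)=(t+h,0)$. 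Third, the fibre $\phi^{-1}\big((t,0)\big)$, which the structure of the Renewal Bridge Graph shows to be the downward path $\{(t-k,k):0\le k<N(t,0)\}$, where $N(t,0)=\max\{m\ge 1:\eta_{t-m}=m\}$ (and $N(t,0)=1$ if that set is empty). Since $\sum_{m}\mathbb{P}(\eta=m)=1<\infty$, the Borel–Cantelli lemma gives $N(w)<\infty$ a.s.; but, by independence of the $\eta_{t-m}$'s, $\mathbb{P}(N\ge m)\ge 1-\exp\!\big(-\mathbb{P}(\eta\ge m)\big)\ge\tfrac12\,\mathbb{P}(\eta\ge m)$, whence $\mathbb{E}_{[G^{\eta},o]}[N(o)]\ge\tfrac12\,\mathbb{E}[\eta]=\infty$.

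Now assume, towards a contradiction, that $[B',o']$ is unimodular and unroot-equivalent to $[B_X]$. Transporting one unit of mass from each vertex to its owner, i.e.\ $g(u,v)=\mathbbm{1}\{\phi(u)=v\}$, the Mass Transport Principle gives $1=\mathbb{E}\big[\sum_{u}g(u,o')\big]=\mathbb{E}\big[N(o')\,\mathbbm{1}\{o'\in W\}\big]$; hence $p:=\mathbb{P}(o'\in W)>0$ and $\mathbb{E}[N(o')\mid o'\in W]=1/p<\infty$. On the other hand, contracting each (a.s.\ finite, connected) fibre of $\phi$ to its owner turns $[B',o']$ into a unimodular network carried by $W$ whose marked graph is $\overset{d}{=}G^{\eta}$; a routine mass-transport computation identifies its root distribution with the law of $o'$ conditioned on $o'\in W$. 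By uniqueness of the unimodularization of the ergodic $G^{\eta}$, that conditioned law equals $[G^{\eta},o]$, so $\mathbb{E}[N(o')\mid o'\in W]=\mathbb{E}_{[G^{\eta},o]}[N(o)]=\infty$, contradicting $1/p<\infty$. Therefore no such $[B',o']$ exists.

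The hard part is the last paragraph: showing that the contraction of the hypothetical unimodular $B'$ along the fibres of $\phi$ is again unimodular \emph{with root law the Palm law of $o'$ on $W$}, so that it can be matched with the known unimodular Renewal EFT. This is exactly where the ``a.s.\ finite but infinite-mean'' behaviour of the fibre sizes is exploited, and it relies on the standard facts that unimodularity is preserved under contracting finite covariant partitions and that an ergodic non-rooted network has a unique unimodularization (see \cite{Baccelli2017}). A secondary point to settle carefully is the covariance of $W$ inside $B_X$, i.e.\ that the regeneration points are intrinsic; the explicit coupling with $G^{\eta}$ from Section~\ref{S4} is the most convenient tool for this.
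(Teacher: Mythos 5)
Your strategy is genuinely different from the paper's. The paper proves Proposition \ref{Pr5} by pointing to the proof of Proposition \ref{Pr12}, which runs on the \emph{vertical} foils: by Proposition \ref{Pr14} the foils of a connected component are its intersections with vertical timelines, by Proposition \ref{Pr7} each foil is a.s. infinite, and each foil meets the covariant time axis in exactly one vertex; the no infinite/finite inclusion lemma of \cite{Baccelli2017} (a single mass transport: every vertex sends unit mass to the unique time-axis vertex of its foil, so the root sends mass $1$ but receives mass $\infty\cdot\mathbbm{1}\{o'\in W\}$, which is incompatible with the mass transport principle whether $\mathbb{P}(o'\in W)$ is zero or positive) then rules out any unimodular rooting. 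You instead work with the \emph{diagonal} fibres of the owner map $\phi$, which are a.s. finite with infinite mean. Your computations there are correct (the fibre description $\phi^{-1}(t,0)=\{(t-k,k):0\le k<N\}$, the Borel--Cantelli finiteness, the lower bound $\mathbb{E}[N]\ge\tfrac12\mathbb{E}[\eta]$), and your first transport correctly yields $\mathbb{E}[N(o')\mathbbm{1}\{o'\in W\}]=1$, hence $\mathbb{P}(o'\in W)>0$ and $\mathbb{E}[N(o')\mid o'\in W]<\infty$. One terminological slip: $\mathbb{Z}\times\{0\}$ is the time axis, not the $S$-set, which the paper defines as the intersection with the zero timeline.

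The genuine gap is the final paragraph, which you yourself flag as the hard part. First, ``unimodularity is preserved under contracting finite covariant partitions'' is not correct as stated: pushing the root forward by $\phi$ does not preserve unimodularity without size-biasing by $1/N$; the tool you actually need is the induced-subnetwork lemma, namely that the covariant subnetwork on $W$, rooted at $o'$ and \emph{conditioned} on $o'\in W$, is unimodular. Second, and more seriously, ``an ergodic non-rooted network has a unique unimodularization'' does not appear in \cite{Baccelli2017} and cannot simply be cited; it is the crux of your whole argument. It is true in this setting, but establishing it requires recovering the embedding of $G^{\eta}$ into $\mathbb{Z}$ from the edge lengths, identifying unimodular rootings of such rigid $\mathbb{Z}$-indexed networks with translation-invariant mark sequences, and then an ergodic-decomposition argument showing that a stationary law agreeing with an ergodic one on the invariant sigma-algebra must coincide with it (one must also check that conditioning on $\{o'\in W\}$ does not perturb the unrooted law, which again uses ergodicity). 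None of this machinery is needed: once your first transport gives $\mathbb{P}(o'\in W)>0$, the paper's single transport along the a.s. infinite vertical foils of Proposition \ref{Pr7} delivers the contradiction immediately.
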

 \begin{proof}
 The proof is similar to that of Proposition \ref{Pr12}, which will be presented in Section \ref{S5}. 
\end{proof}
In the Renewal Bridge Graph, the function that maps every vertex to its right adjacent vertex, in the next time, is a vertex shift in the sense of \cite{Baccelli2017}. As defined in Section \ref{S2}, considering this vertex shift, one can considers its foils in the Bridge Graph. 
\begin{proposition} 
\label{Pr6}
The foils of a connected components of the Renewal Bridge Graph are its intersections with vertical timelines. There are infinitely many foils in each connected component of the Bridge Graph, and the order of the foils is of type $\mathbb{Z}$.
 \end{proposition}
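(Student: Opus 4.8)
The plan is to argue in three stages: (i) the foils of a component are exactly its slices by the vertical timelines $\{t\}\times\mathbb N$; (ii) a component meets infinitely many timelines, so has infinitely many foils, and the set of timelines it meets is an interval of $\mathbb Z$ that is unbounded above; (iii) that interval is also unbounded below, hence equals $\mathbb Z$, so the foils, ordered by their time coordinate, form a chain of type $\mathbb Z$.

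For (i), let $f$ be the vertex shift sending a vertex to the endpoint of its (unique) outgoing edge. Every edge of the Renewal Bridge Graph goes from time $t$ to time $t+1$, so $f$ raises the time coordinate by exactly one; hence $f^{n}(v)=f^{n}(w)$ already forces $v,w$ to lie on the same timeline, and every foil is contained in a single timeline. Conversely, fix a connected component $C$; it is a tree in which every vertex has out-degree one. Therefore, for any $v,w\in C$, the unique path joining them in $C$ goes forward along edges up to a single ``peak'' vertex $z$ and then backward along edges down to $w$: a ``backward step then forward step'' at a vertex is impossible since it would make two edges leave that vertex. Thus $z=f^{a}(v)=f^{b}(w)$ for some $a,b\ge 0$, so the forward orbits of $v$ and $w$ coalesce. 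If in addition $v$ and $w$ lie on the same timeline $\{t\}\times\mathbb N$, then comparing time coordinates gives $t+a=t+b$, so $a=b$ and $v\sim_{f}w$. Hence the foil of any vertex of $C$ on the timeline $\{t\}\times\mathbb N$ equals $C\cap(\{t\}\times\mathbb N)$.

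For (ii), the forward orbit $v,f(v),f^{2}(v),\dots$ of any $v\in C$ on timeline $t_{v}$ visits the timelines $t_{v},t_{v}+1,t_{v}+2,\dots$, so $C$ meets infinitely many timelines and, by (i), has infinitely many foils; moreover, if $C$ meets timelines $s<s'$, any path in $C$ between a vertex on timeline $s$ and one on timeline $s'$ changes the time coordinate by $\pm1$ at each step, so it meets every timeline in $[s,s']$. Thus the set $I(C)\subseteq\mathbb Z$ of times met by $C$ is an interval, unbounded above.

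The only substantial point is that $I(C)$ is unbounded below, and this is where I expect the main difficulty. Here I would pass to the Renewal EFF $G^{\eta}$ via the coupling \eqref{Co}: identifying the EFF vertex $i\in\mathbb Z$ with the time-axis vertex $(i,0)$ of the Renewal Bridge Graph, a component $\widetilde C$ of $G^{\eta}$ corresponds to the component $C$ of $B_{X}$ containing $\{(i,0):i\in\widetilde C\}$, and since every vertex of $C$ on a given timeline lies on the decreasing trajectory issued from a time-axis vertex of $C$ on an earlier-or-equal timeline, one gets $\inf I(C)=\inf\widetilde C$. So it suffices to show that a.s. no component of $G^{\eta}$ has a least vertex. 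The law of $G^{\eta}$ is translation invariant on $\mathbb Z$, and every component is infinite (forward orbits never terminate). Consider the mass transport on $\mathbb Z$ in which $i$ sends mass $1$ to $\min\widetilde C_{i}$ when $\widetilde C_{i}$ is bounded below and no mass otherwise: the mass leaving $0$ has expectation $\mathbb P(\widetilde C_{0}\text{ bounded below})\le 1$, whereas the mass entering $0$ equals $|\widetilde C_{0}|=\infty$ on $\{0=\min\widetilde C_{0}\}$ and $0$ off it; by the mass-transport principle for translation-invariant random networks on $\mathbb Z$ these expectations are equal, which is possible only if $\mathbb P(\widetilde C_{0}\text{ bounded below})=0$. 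By translation invariance the same holds with $\widetilde C_{i}$ for every $i$, so countable subadditivity gives that a.s. no component of $G^{\eta}$ is bounded below, whence $\inf I(C)=-\infty$ for every component $C$ of $B_{X}$. Combined with (ii) this yields $I(C)=\mathbb Z$, proving that the foils ordered by time have order type $\mathbb Z$. Parts (i) and (ii) are bookkeeping about out-degree-one trees; (iii) genuinely requires transferring the question to the $\mathbb Z$-stationary (and unimodular) object $G^{\eta}$ and running a mass-transport argument there.
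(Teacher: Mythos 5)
Your proof is correct, and its first part coincides with what the paper actually does: the paper's proof of Proposition \ref{Pr6} simply defers to Proposition \ref{Pr14}, whose proof is exactly your step (i) --- forward trajectories of two vertices in the same out-degree-one tree component must coalesce, and since each step advances time by one unit, coalescence in the same number of steps is equivalent to lying on the same timeline. (Your explicit ``no backward-then-forward step at a vertex'' justification of why an undirected path forces the \emph{forward} orbits to meet is a detail the paper glosses over, but it is the intended argument.)

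Where you genuinely depart from the paper is in step (iii). The proof of Proposition \ref{Pr14} stops after the foil--timeline identification and asserts ``thus there are infinitely many foils and the order is that of $\mathbb{Z}$'' without argument; in particular it never shows that a connected component meets timelines unbounded below, which is precisely the content of the order type being $\mathbb{Z}$ rather than $\mathbb{N}$. You correctly isolate this as the substantive point and close it by transferring to the Renewal EFF via the coupling \eqref{Co} and running a mass transport: if a component had a minimal vertex, that vertex would receive infinite incoming mass while each vertex sends out mass at most one, contradicting the translation invariance of $G^{\eta}$ (Proposition \ref{Pr1}). This is consistent with the paper's overall philosophy --- the Bridge Graph is not unimodular, so all such arguments must be routed through the unimodular EFF --- and an alternative would have been to quote the full foil classification theorem for the $\mathcal{I}/\mathcal{I}$ components of $G^{\eta}$ established in Proposition \ref{Pr2}, which already asserts order type $\mathbb{Z}$ for the foils there, and then transfer; your direct mass-transport argument is self-contained and arguably cleaner. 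The only cosmetic caveat is that your transport uses the stationary-marking (translation-invariance) form of the MTP on $\mathbb{Z}$ with $g(i,j)=\mathbbm{1}_{\{j=\min \widetilde C_i\}}$, which depends on the integer labels rather than only on the isomorphism class; this is legitimate for stationary markings of $\mathbb{Z}$ and is the same usage the paper makes in the proof of Proposition \ref{Pr2}, so it is not a gap.
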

\begin{proof}
It will be shown in Proposition \ref{Pr14} that the same property holds in the Bridge Graphs constructed  by a general null recurrent Markov Chain. So the result is valid for the Renewal Bridge Graph as well. 
\end{proof}
\subsection{Properties of the $S$-set in the Renewal Bridge Graph}
%There is a known connection between the Bridge Graph constructed with a positive recurrent Markov Chain and perfect sampling of the stationary distribution of that Markov Chain \cite{Baccelli2018}. So one motivation to study the Renewal Bridge Graph is the perfect sampling. under \eqref{1.6},the Renewal Markov Chain that we consider is null recurrent. So here, we don't have the stationary distribution but the stationary measure. 
\begin{definition}
Consider the intersection of the Bridge Graph with the zero timeline. This set is a random subset of the state space $\mathcal{S}$, referred to as the \textbf{$S$-set}. 
\end{definition}
%The $S$-set is stationary in the sense that if we shift one unit to the right, the foil of time $1$, we have the same distribution. The invariance of this set is linked to the notion of stationary measure.\\
In the Renewal Bridge Graph, suppose that vertex $(0,y)$ belongs to the $S$-set. Then, since the vertices of the Renewal Bridge Graph have a backtrack to a vertex in the time axis, if one goes backward in time, from vertex $(0,y)$, one eventually reaches a vertex in the time axis for the first time. This vertex is denoted by $\mathbf{(t^{-}_{y},0)}$. Also, by continuing the path that passes through the vertex $(0,y)$ forward in time, it will also reach a vertex on the time axis. Denote this vertex by $\mathbf{(t^{+}_{y},0)}$. Note that by the definition of the Renewal Markov Chain, $(t^{+}_{y},0)=(y,0)$. So for each vertex $y \ne 0$ in the $S$-set of the Renewal Bridge Graph, there is a path in the graph that starts from a vertex on the time axis before time zero and returns to the time axis, for the first time, after time zero. Correspondingly, under the coupling \eqref{Co}, there is an edge in the Renewal EFT that starts from vertex $t^{-}_{y}$ before time zero and ends at vertex $t^{+}_{y}$ after time zero. See Figure \ref{F-5-1}. 
\begin{figure}[h]
\caption{Correspondence between an edge that flying over zero in the Renewal EFT and a path that started before time zero and ended after time zero in the Renewal Bridge Graph. }
\includegraphics[scale=0.4]{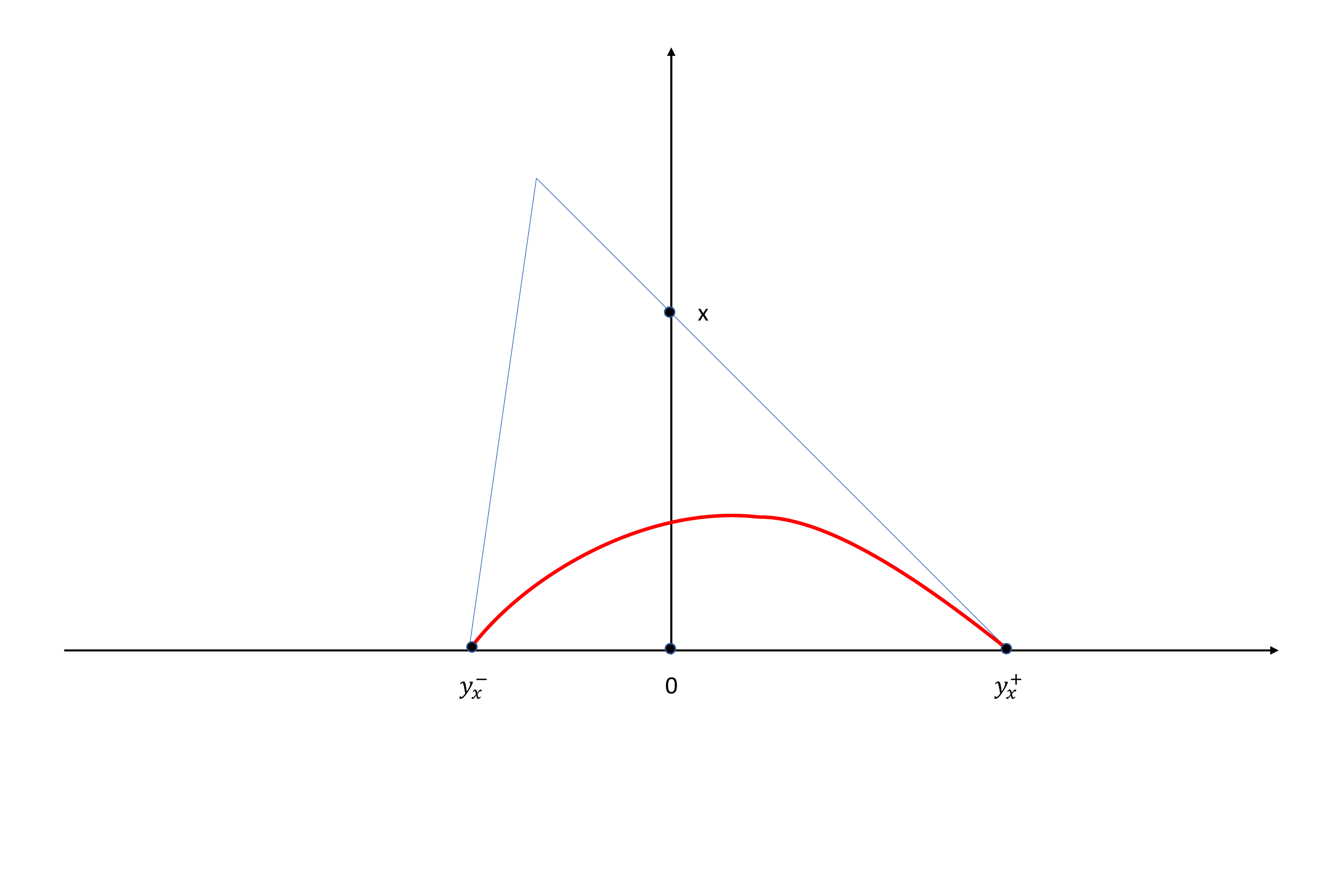}
\centering
\label{F-5-1}
\end{figure}
\begin{definition}
\label{D05}
In the Renewal EFT (EFF), an edge that starts before zero and ends after zero is called \textbf{flying over zero}.
%\begin{figure}[h]
%\includegraphics[scale=0.4]{}
%\centering
%\label{F1}
%\end{figure}
\end{definition}
The following propositions give results about the $S$-set of the connected Renewal Bridge Graph.
\begin{proposition}
\label{Pr7}
The S-set has an a.s. infinite cardinality. 
\end{proposition}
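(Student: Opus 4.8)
The plan is to translate the problem of counting the points of the $S$-set into a statement about the i.i.d.\ edge lengths $\{\eta_i\}_{i\in\mathbb{Z}}$, using the correspondence described just before Definition~\ref{D05} (see Figure~\ref{F-5-1}). Recall that the point $0$ always belongs to the $S$-set, while for $n\ge 1$ the vertex $(0,n)$ lies in the Renewal Bridge Graph if and only if the trajectory issued from some $(-k,0)$ with $k\ge 1$ passes through it. Under the coupling \eqref{Co} the jump at time $-k$ equals $\eta_{-k}-1$, so the decreasing branch of that trajectory is at state $\eta_{-k}-k$ at time $0$; hence $(0,n)$ is in the Bridge Graph exactly when $\eta_{-k}=n+k$ for some $k\ge 1$ --- equivalently, $n$ is the right endpoint of an edge of the Renewal EFT flying over zero. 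Therefore
\[
\#(S\text{-set})\;=\;1+\#\bigl(\{\eta_{-k}-k:\ k\ge 1\}\cap\mathbb{N}^{*}\bigr),
\]
and it suffices to show that $R:=\{\eta_{-k}-k:\ k\ge 1\}\cap\mathbb{N}^{*}$ is a.s.\ infinite.

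The next step is to reduce \emph{infinite} to \emph{unbounded}: since $R\subseteq\mathbb{N}^{*}$, one has $\#R=\infty$ if and only if $R\cap\{m,m+1,\dots\}\ne\emptyset$ for every $m\ge 1$, which is a countable intersection of events, so it is enough to prove $\mathbb{P}\bigl(R\cap\{m,m+1,\dots\}\ne\emptyset\bigr)=1$ for each fixed $m\ge 1$. Now $R$ meets $\{m,m+1,\dots\}$ precisely when $\eta_{-k}\ge m+k$ for some $k\ge 1$ (note that $\eta_{-k}\ge m+k\ge k+1$ already forces $\eta_{-k}>k$, so such a point genuinely lies in $\mathbb{N}^{*}$). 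As the $\eta_{-k}$ are i.i.d.\ copies of $\eta$, the complementary event has probability $\prod_{k\ge 1}\mathbb{P}(\eta<m+k)=\prod_{k\ge 1}\bigl(1-\mathbb{P}(\eta>m+k-1)\bigr)$. The standing hypothesis $\mathbb{E}(\eta)=\infty$ gives $\sum_{j\ge 0}\mathbb{P}(\eta>j)=\infty$, hence the tail sum $\sum_{k\ge 1}\mathbb{P}(\eta>m+k-1)=\sum_{j\ge m}\mathbb{P}(\eta>j)$ also diverges, and so this infinite product equals $0$. Intersecting the resulting probability-one events over all $m$ shows that $R$ is a.s.\ infinite, which proves the proposition.

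I do not expect a real obstacle here. The substance is the bookkeeping that identifies the $S$-set with the set of landing points of the edges flying over zero (already prepared in the text surrounding Figure~\ref{F-5-1}), together with the elementary observation that an unbounded subset of $\mathbb{N}^{*}$ is infinite --- this is what lets one avoid a Borel--Cantelli argument on the random quantity $\sup R$ and instead work with a clean countable family of almost-sure events. The only place the hypotheses enter is the divergence $\sum_j\mathbb{P}(\eta>j)=\infty$, i.e.\ exactly the null-recurrence assumption $\mathbb{E}(\eta)=\infty$; connectedness of the Bridge Graph is in fact not needed for this particular statement.
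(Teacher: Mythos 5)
Your proof is correct, but it takes a genuinely different route from the paper's. The paper argues by contradiction through unimodularity: if the $S$-set were finite, only finitely many edges of the Renewal EFT would fly over zero, so some endpoint of a flying edge would have infinitely many descendants, contradicting the $\mathcal{I}/\mathcal{I}$ classification established in Proposition~\ref{Pr2}. You instead make the identification of the $S$-set with $\{0\}\cup\bigl(\{\eta_{-k}-k:k\ge 1\}\cap\mathbb{N}^{*}\bigr)$ explicit and prove unboundedness directly from the independence of the $\eta_{-k}$ together with the divergence of $\sum_{j}\mathbb{P}(\eta>j)$, i.e.\ a second Borel--Cantelli (divergent infinite product) argument. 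Your version is more elementary and self-contained: it bypasses Proposition~\ref{Pr2} and the mass transport machinery entirely, does not require connectedness, and isolates exactly where null recurrence ($\mathbb{E}(\eta)=\infty$) is used. The trade-off is that it leans crucially on the total independence of the jump variables, which is special to the renewal setting; the paper's argument, by contrast, only uses the $\mathcal{I}/\mathcal{I}$ property of the underlying unimodular EFF and therefore transfers verbatim to the general null recurrent Bridge Graph via the Recurrence Time EFF, where the return times $\mathcal{T}_t$ are identically distributed but \emph{not} independent --- this is precisely how Proposition~\ref{Pr15} is deduced. So your argument is a clean and valid replacement here, but it would not serve as the template for the general case the way the paper's does.
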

\begin{proof}
Assume the cardinality of the $S$-set is a.s finite. Then, in the corresponding Renewal EFT, there are finitely many edges flying over zero.  That is, the set \\ 
\begin{equation}
F:=\{ t\in \mathbb{N};\text{\footnotesize{ there is a flying edge over zero in the Renewal EFT with the end vertex $t$}}\},
\end{equation}
\newline \normalsize{is a.s. finite. So in the Renewal EFT, all the vertices before vertex $0$ are the descendants of one of the vertices in the $F$ set. So at least one of the vertices in the $F$ set has infinitely many descendants, which is impossible because all trees of  the Renewal EFF are $\mathcal{I/I}$.}
\end{proof}
\begin {proposition}
\label{Pr8}
Almost surely, every vertex in the $S$-set has finitely many *-descendants in the Renewal Bridge Graph.
\end{proposition}
\begin{proof}
Let $(0,y)$ be a vertex in the S-set. The number of *-descendants of $(0,y)$ is equal to the number of the descendants of vertex $t^{+}_{y}$ in the Renewal EFF. Each connected component of the Renewal EFF is $\mathcal{I/I}$ . So this number is a.s. finite. 
\end{proof}
\begin{remark}
Note that the \textbf{finite descendant property} is not just for the vertices in the $S$-set and holds for the whole Bridge Graph. 
\end{remark}
\section{Properties of the Bridge Graph of a General Null Recurrent Markov Chain}
\label{S5}
This section extends most of the results on the Renewal Bridge Graph to the Bridge Graph of a general aperiodic, null recurrent Markov Chain. Many properties of the Renewal Bridge Graph are concluded from the coupling defined in \eqref{Co}. An object similar to the Renewal EFT will be used for the general Bridge Graph. 
\begin{definition}  Consider an aperiodic and null recurrent Markov Chain $\{X_{n}\}_{n \in \mathbb{N}}$, and its associated Bridge Graph ${B}_{X}$, with reference vertex, $s^{*}$. In this setup the time axis is $\{(t,s^{*});t\in\mathbb{Z})$. In  ${B}_{X}$, consider the paths starting from a point in the time axis for example $(t,s^{*})$, then look at the times when these paths get back to this axis again. Let the random variable  $\mathcal{T}_{t} $ denote the time that it takes for the path starting from vertex $(t,s^{*})$ in the Bridge Graph to return to the time axis for the first time. Define $G^{\mathcal{T}}=G^{\mathcal{T}}(V, E)$ be the random graph, where $V$, the set of the vertices, is the whole $\mathbb{Z}$, and where $E$, the set of the edges, is defined as follows: each point $t$ has a single outgoing edge, $e_{t}$, with length $\mathcal{T}_{t}$. This random graph is called the \textbf{Recurrence Time EFF} of ${B}_{X}$. In the connected case, it is referred to as the Recurrence Time EFT. 
\end{definition}
\begin{remark}  Note that in the Recurrence Time EFF:
\begin{enumerate} [label=\roman*]
\item Since the Markov Chain $\{X_{n}\}_{n \in \mathbb{N}}$ is null recurrent for all $t$, then $\mathbb{E}(\mathcal{T}_{t} )=\infty$.
\item The random variables $\{\mathcal{T}_{t}\}$ are identically distributed. However, since the paths that start from two different vertices $(t_{1},s^{*})$ and $(t_{2},s^{*})$ in the Bridge Graph may meet each other before returning to $s^{*}$, the random variables $\{\mathcal{T}_{t}\}$ are not independent in general, even in the totally independent case defined in Remark \ref{RT}.
\end{enumerate}
\end{remark}
\subsection{Properties of the Recurrence Time EFF and the Null Recurrent Bridge Graph} \hfill \\
 The Recurrence Time EFF has almost the same properties as the Renewal EFF. 
\begin{proposition}
\label{Pr9} 
The Recurrence Time EFF is a unimodular network. Also, it is in the $\mathcal{I/I}$ class of the foil classification theorem for unimodular networks, when it is connected. 
\end{proposition}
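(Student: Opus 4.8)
The plan is to mimic the proofs of Propositions \ref{Pr1} and \ref{Pr2}, transferring them to the Recurrence Time EFF via the same structural features that made the Renewal EFF amenable. For unimodularity, the key observation is that the Recurrence Time EFF, like the Renewal EFF, is a random graph whose vertex set is $\mathbb{Z}$ and whose edge set is determined by the marks $\{\mathcal{T}_t\}_{t\in\mathbb{Z}}$ attached to the vertices of the deterministic integer line $(G,o)$ with $V=\mathbb{Z}$ and $E=\{(n,n+1):n\in\mathbb{Z}\}$. The only subtlety compared with Proposition \ref{Pr1} is that the marks $\mathcal{T}_t$ are \emph{not} i.i.d.\ (they are merely stationary, as noted in the Remark following the definition). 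So the plan is to argue that the joint law of $\{\mathcal{T}_t\}_{t\in\mathbb{Z}}$ is a translation-invariant (stationary) random field on $\mathbb{Z}$, hence an equivariant mark on the unimodular graph $(G,o)$; enriching a unimodular network with a translation-equivariant (stationary) marking preserves unimodularity, which gives that the Recurrence Time EFF is unimodular. One should also note it is locally finite: each vertex has out-degree one, and the in-degree at any vertex $t$ is the number of $s<t$ with a path from $(s,s^*)$ first returning to the axis at $(t,s^*)$, which is a.s.\ finite by the same Borel–Cantelli argument as in the proof of Proposition \ref{Pr3-4} (the events ``first return from $(s,s^*)$ lands exactly at $t$'' have summable probabilities over $s$).

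For the second assertion — that in the connected case it is $\mathcal{I}/\mathcal{I}$ — the plan is to run the proof of Proposition \ref{Pr2} essentially verbatim, using the vertex shift $f$ that sends each vertex of $\mathbb{Z}$ to its right neighbour $t\mapsto t+1$. Each connected component is an infinite tree with no cycles, so by the Foil Classification Theorem it is either $\mathcal{I}/\mathcal{F}$ or $\mathcal{I}/\mathcal{I}$; assume for contradiction it is $\mathcal{I}/\mathcal{F}$, so there is a unique bi-infinite $f$-path $\mathcal{P}$, which is therefore a covariant subgraph with $\mathbb{P}(0\in V_{\mathcal{P}})>0$ by Lemma~2.8 of \cite{Baccelli2017}. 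A mass-transport argument identical to \eqref{Pr2-1}–\eqref{Pr2-2} then forces the expected gap between $0$ and its right neighbour in $\mathcal{P}$, given $0\in V_{\mathcal{P}}$, to be finite. The contradiction comes from the "freshness to the right" property: conditionally on the past (the structure at times $\le 0$), the edge lengths $\mathcal{T}_t$ for $t\ge 0$ along a path that has reached time $0$ from $-\infty$ are distributed as recurrence times of a null recurrent chain, hence have infinite mean, so the first such step already has infinite conditional expectation — contradicting finiteness of the gap. For the disconnected-but-connected-components case one argues as in the last paragraph of the proof of Proposition \ref{Pr2}: a unique bi-infinite path would again be covariant and excluded, and two distinct bi-infinite paths reaching time $0$ would, by aperiodicity/irreducibility of the Markov chain, meet with positive probability after time $0$ — impossible by the Foil Classification Theorem since they would lie in one component.

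The main obstacle I anticipate is making the "freshness" / infinite-mean-to-the-right step fully rigorous in the general Markov chain setting, since here the $\mathcal{T}_t$ are genuinely dependent: one must argue that conditionally on the portion of the Bridge Graph at times $\le 0$, a path arriving at $(0,s^*)$ from the negative time axis continues as a copy of the Markov chain started at $s^*$ driven by the independent fresh randomness $\{\xi^x_t\}_{t\ge 0}$, so that its first return time to $s^*$ is genuinely the return time $T_{s^*}^+$ with $\mathbb{E}[T_{s^*}^+]=\infty$ by null recurrence. Once this conditional description is in place, the inequality "conditional expected gap $=\infty$ vs.\ $<\infty$" is immediate. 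A secondary point requiring care is the justification that enriching with a \emph{stationary} (rather than i.i.d.) mark field preserves unimodularity; this is standard but should be cited precisely from \cite{Aldous} or \cite{Baccelli2017} rather than invoked loosely, since the proof of Proposition \ref{Pr1} only used the i.i.d.\ case.
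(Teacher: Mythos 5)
Your proposal is correct and follows essentially the same route as the paper: the paper's (much terser) proof asserts unimodularity because the Recurrence Time EFF is a stationary marking of $\mathbb{Z}$ rooted at $0$, and for the $\mathcal{I}/\mathcal{I}$ property it simply invokes the proof of Proposition \ref{Pr2} verbatim for the connected case. The two points you flag as needing care (stationary rather than i.i.d.\ marks, and the conditional ``freshness'' of the return time to the right given the past) are exactly the details the paper leaves implicit, and your treatment of them is sound.
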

\begin{proof}
Since the Recurrence Time EFF  is constructed by a stationary marking of $\mathbb{Z}$, it is a unimodular random graph rooted at $0$ (see \cite{Aldous}). For the $\mathcal{I/I}$ structure property in the connected case, the same proof as for Proposition \ref{Pr2} for the connected case works here. 
\end{proof}
\begin{remark}
The Recurrence Time EFT, $G^{\mathcal{T}}$, is defined as a function of the Bridge Graph.  In the Bridge Graph, $B_{X}$, the mass transport principle holds for those vertices that belong to $G^{\mathcal{T}}$. This means that the triple $(B_{x}, A, 0)$ is locally unimodular in the sense of \cite{Tom}, with $A=\{(x,y);y = s^{*}\}$.
\end{remark}

%\begin{proposition}
%\label{Pr10} 
%In the case which the distribution of jumps were as in equation \eqref{4-0-1}, we use the results on oscillating random walk \cite{Kemperman1973} to show that for $\alpha < \frac{1}{2}$ the network is forest and for $\alpha\ge \frac{1}{2}$ it is tree. correspondingly we had the same result in the renewal Bridge Graph. In the case that we had a forest in the Bridge Graph the measure which we defined in the S-set, PIR measure, could be written as the union of the countably many parts in which each part belongs to one of the connected components in the forest. \\
%Here, in the general case, again we are searching for a similar classification on the structure. We like to know under which conditions the Markov Chain constructed by the Bridge Graph of a null recurrent Markov Chain and correspondingly the Bridge Graph itself are trees and when they are forests. \\
%We could not construct the oscillating random walk and find its recurrence and transience properties. Since the jumps here are not independent we don't have the Markov Property between the distance of the particles anymore.\\
%\end{proposition}
%Here again we are looking for the properties like the dimension of the foils and the dimension of the point processes, ergodicity of the structure and it's foils and indistinguishability of the foils which are unknown in the renewal case also. \\
So far, only the properties of Recurrence time EFFs were discussed. Their implications on  general null recurrent Bridge Graphs are now considered. 
\begin{proposition} 
\label{New3} Every bi-infinite path, $\{\beta_{t}\}_{t\in \mathbb{Z}} $, in the null recurrent Bridge Graph, $B_{X}$, corresponds to a bi-infinite path in its associated Recurrence time EFF.
 \end{proposition}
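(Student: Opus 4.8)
The plan is to adapt the proof of Proposition~\ref{Pr3-4}: I will show that every bi-infinite path $\{\beta_{t}\}_{t\in\mathbb{Z}}$ in $B_{X}$ is \emph{bi-recurrent}, meaning it meets the time axis $\mathbb{Z}\times\{s^{*}\}$ infinitely often both at positive and at negative times. Granting this, if $\cdots<\tau_{-1}<\tau_{0}<\tau_{1}<\cdots$ denote the times at which $\beta_{\tau_{i}}=s^{*}$, then by the definition of the recurrence time one has $\tau_{i+1}=\tau_{i}+\mathcal{T}_{\tau_{i}}$ for all $i$, so $(\tau_{i})_{i\in\mathbb{Z}}$ is precisely a bi-infinite path of $G^{\mathcal{T}}$, which is the claim. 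The positive half is immediate from recurrence: fixing any $t_{0}$, the forward Doeblin trajectory started at $\beta_{t_{0}}$ is a sample path of the Markov Chain, which is irreducible and recurrent, hence returns to $s^{*}$ infinitely often a.s.; taking a countable union over all possible starting vertices $(t_{0},x)$, $x\in\mathcal{S}$, handles the ``for every path'' quantifier.

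The substance is the negative half, which I would argue by contradiction. Suppose $\{t:\beta_{t}=s^{*}\}$ is bounded below and set $T:=\min\{t:\beta_{t}=s^{*}\}$, so $\beta_{t}\neq s^{*}$ for every $t<T$. For this fixed $T$, introduce $R_{T}:=\{\,s<T:\ s+\mathcal{T}_{s}=T\,\}$, the set of times from which the Bridge path first returns to the time axis exactly at $T$. First I bound $R_{T}$: since the driving sequence $\{\xi_{t}^{x}\}$ is stationary in $t$, one has $\mathcal{T}_{s}\overset{d}{=}\mathcal{T}_{0}$, hence $\mathbb{E}|R_{T}|=\sum_{s<T}\mathbb{P}(\mathcal{T}_{0}=T-s)=\mathbb{P}(\mathcal{T}_{0}<\infty)=1$ by recurrence, so $R_{T}$ is a.s.\ finite (simultaneously for all $T$, by a further countable union). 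Next I show the hypothetical path forces $R_{T}$ to be infinite. For each $t<T$, since $\beta_{t}\in B_{X}$ there is a time-axis vertex $(s,s^{*})$ whose forward Doeblin trajectory passes through $\beta_{t}$, and necessarily $s<t$ because $\beta_{t}\neq s^{*}$; as the forward dynamics is deterministic, this trajectory then coincides with $\{\beta_{u}\}_{u\ge t}$ and so reaches $\beta_{T}=s^{*}$ at time $T$. Replace $s$ by $s'':=\max\{u\in[s,t):\ \text{the trajectory is at }s^{*}\text{ at time }u\}$. By maximality the trajectory avoids $s^{*}$ on $(s'',t)$; it avoids $s^{*}$ on $[t,T)$ because there it equals $\beta$; and it is at $s^{*}$ at time $T$. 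Hence its first return to the time axis is exactly $T$, i.e.\ $s''\in R_{T}$ with $s''<t$. Letting $t\to-\infty$ produces elements of $R_{T}$ below every level, so $R_{T}$ is infinite --- contradicting the bound above. Therefore $\{\beta_{t}\}$ meets the time axis infinitely often at negative times as well, and the proof is complete.

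I expect the delicate point to be this penultimate step: one must check carefully that $\beta_{t}\in B_{X}$ genuinely supplies an ancestor on the time axis \emph{strictly before} time $t$ (this uses $\beta_{t}\neq s^{*}$ for $t<T$), and that after passing to the last visit $s''$ the trajectory's \emph{first} return to the time axis is $T$ and not some intermediate time (this uses both the maximality in the choice of $s''$ and the fact that $\beta$ avoids $s^{*}$ on $[t,T)$). This is the general-Markov-Chain substitute for the explicit identity $\{\beta_{t}\}_{t\le T}=\{(t,T-t)\}_{t\le T}$ that trivialised the renewal case: there the unique decreasing path did the job, whereas here its role is played by ``the unique forward Doeblin trajectory through $\beta_{t}$'', and the renewal-length event $\{s+\eta_{s}=T\}$ is replaced by $\{s+\mathcal{T}_{s}=T\}$, whose summability over $s$ is exactly recurrence of the chain.
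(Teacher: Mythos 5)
Your proof is correct and follows the same overall strategy as the paper's: reduce the claim to bi-recurrence of $\{\beta_t\}$, and rule out a first meeting time $T$ with the time axis by showing that the set of times whose first return to the axis occurs exactly at $T$ (your $R_T$, the paper's $D$) is a.s.\ finite, while a non-bi-recurrent bi-infinite path would force it to be infinite. The one genuine divergence is how finiteness of this set is obtained: the paper identifies $D$ with the first-generation descendants of $T$ in the Recurrence Time EFF and invokes its unimodularity (Proposition~\ref{Pr9}) to get a.s.\ finite degree, whereas you compute $\mathbb{E}|R_T|=\sum_{k\ge 1}\mathbb{P}(\mathcal{T}_0=k)=1$ directly from stationarity in $t$ and recurrence. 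Your route is more elementary and self-contained (it does not need the unimodular machinery for this step, only identical distribution of the $\mathcal{T}_s$, and it survives their lack of independence since only linearity of expectation is used); the paper's route buys consistency with the framework it reuses elsewhere. Your careful treatment of the ``backtrack'' step --- extracting an ancestor on the time axis strictly before $t$ and then passing to the last visit $s''$ so that the first return is genuinely at $T$ --- makes explicit a point the paper's proof (like that of Proposition~\ref{Pr3-4}) passes over quickly, and your closing identification of the visit times $(\tau_i)$ with a bi-infinite path of $G^{\mathcal{T}}$ supplies the final translation that the paper leaves implicit.
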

\begin{proof}
The proof of this proposition is almost the same as Proposition \ref{Pr3-4} in the Renewal case. It is first shown that there is no bi-infinite path in the null recurrent Bridge Graph that intersects the time axis for the first time at time $T$, where $T$ is an arbitrary time in the time axis.  Define the same set $D$, as in the renewal case: \\

$D=$\{$T-t; $ the path that starts from $(T-t,s^{*})$ in the null recurrent Bridge Graph goes back to the time axis for the first time at time $T$\}. \\

Note that the definition of $D$ gives that the vertices $(T-t,s^{*})$, in $B_{X}$, that belong to $D$, are, in its corresponding recurrence time EFF, the first generation descendants of $T$. The unimodularity of  the Recurrence Time EFF implies that a.s. all vertices have a finite degree. So the cardinality of $D$ is a.s. finite. \\
If there is a bi-infinite path, $\{\beta_{t}\}_{t \in \mathbb{Z}}$, in $B_{X}$, which is not bi-recurrent, then there is a time $T$ such that  $\{\beta_{t}\}_{t \leq T}$ does not intersect the time axis. Since $\{\beta_{t}\}_{t \leq T}$ is a path in the Bridge Graph, every vertex in it has a backtrack to the time axis. It means there exist infinitely many paths starting from a vertex $(t,s^{*}), t < T$, in the time axis and entering $\{\beta_{t}\}_{t \leq T}$, i.e., the starting vertices of these paths belong to the set $D$. This contradicts the fact that $D$ is a.s. finite for any arbitrary $T$. So such a path does not exist. 
\end{proof}
\begin{proof}[Proof of Proposition \ref{Pr13}]
Since the Renewal Bridge Graph is an example of null recurrent Bridge Graph, in the general case also, the null recurrent Bridge Graph can be either a tree or a forest. 
\end{proof}
\begin{proof}[Proof of Proposition \ref{Pr11}]
Proposition \ref{New3} gives that every bi-infinite path in the null recurrent Bridge Graph is bi-recurrent. So if there exists any bi-infinite path in the Bridge Graph, there is a bi-infinite path in the Recurrence Time EFT. Moreover, since the Recurrence Time EFT is $\mathcal{I}/\mathcal{I}$, there is no bi-infinite path in the null recurrent Bridge Graph. 
\end{proof}
\begin{proof}[Proof of Proposition \ref{Prnew}]
Proposition \ref{New3} gives that every bi-infinite path in $B_{X}$ corresponds to a bi-infinite path in the associated Recurrence Time EFF. \\
First, suppose that there is only one bi-infinite path in the Recurrence Time EFF. This bi-infinite path is a covariant subset, and since the Recurrence Time EFF is unimodular, with the same argument as in the proof of Proposition \ref{Pr2}, one can show that this is not possible. So either there is no bi-infinite path in the graph, or there are more than one bi-infinite path. In order to show that the further case is impossible, consider the following set:
\begin{equation}
M=\{(0,s) \in B_{X}; \text{$s$ is in a bi-infinite path}\}.
\end{equation}
Note that, the set $M$ is determined by the property of the Bridge Graph before time zero.  Suppose that $(0,s_{1})$ and $(0,s_{2})$ are two separate arbitrary elements in $M$. By assumption, with positive probability, the trajectories starting from these two vertices meet each other in the future, in the Bridge Graph. It means that, correspondingly, in the Recurrence Time EFF, two bi-infinite paths meet each other with positive probability. I.e., with positive probability, there is a connected component in the Recurrence Time EFF that has two bi-infinite paths, which is impossible due to the foil classification theorem of unimodular networks. So there is no bi-infinite path in the null recurrent Bridge Graph.   
\end{proof}
\begin{remark} Note that:~
\begin{enumerate}
\item From the proof of Proposition \ref{Prnew}, one can conclude that if the Recurrence Time EFF has this property that every two vertices in this graph meet each other with positive probability, then every connected component in this graph is $\mathcal{I}/\mathcal{I}$. 
\item The assumption of Proposition \ref{Prnew} does not put any condition on the MC itself, but it puts a condition on the coupling that exists between the driving sequence $\{\xi_{t}^{y}\}_{y \in S}$ when $t$ is fixed. In particular, in the case where $\{\xi_{t}^{y}\}_{y \in S}$ is totally independent, this condition is satisfied. This condition does not hold is Example \ref{MCLRW}, where the random variables $\{\xi_{t}^{y}\}_{y \in S}$, in this case, are maximally coupled for a fixed $t$. In this example, there are infinitely many bi-infinite paths in the Bridge Graph. 
\item  The Renewal Bridge Graph studied in Section \ref{S4} is an example where the driving sequence is totally independent. 
\end{enumerate}
\end{remark}
\begin{proposition}
\label{Pr14}
Foils in a connected component of the null recurrent Bridge Graph, $B_{X}$, are its intersections with vertical timelines. Thus the null recurrent Bridge Graph has infinitely many foils, and the order of foils is that of $\mathbb{Z}$. 
\end{proposition}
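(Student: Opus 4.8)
The plan is to identify the relevant vertex shift, show that its foils cannot meet two distinct timelines, show conversely that a whole timeline inside one component is a single foil, and finally show that every component reaches every timeline, so that the foils of a component are indexed by all of $\mathbb{Z}$.

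Let $f$ be the vertex shift on $B_X$ sending a vertex to its right neighbour, $f(t,x)=(t+1,h(x,\xi_t^x))$. Recall from the proof of Proposition~\ref{Pr3} that in $B_X$ every vertex has exactly one outgoing edge and every edge joins $\{t\}\times\mathcal S$ to $\{t+1\}\times\mathcal S$, so $B_X$ has no cycle and each connected component is a tree. Since $f$ increases the time coordinate by exactly $1$, the iterate $f^n$ increases it by $n$, so $f^n(u)=f^n(v)$ forces $u$ and $v$ onto the same timeline. Hence every foil of a component $C$ is contained in a single set $C\cap(\{t\}\times\mathcal S)$.

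For the converse inclusion I would fix $u=(t,x)$ and $v=(t,y)$ in one component $C$ and consider the unique simple path $P=(w_0,\dots,w_m)$ joining them in the tree $C$. Consecutive vertices of $P$ lie on consecutive timelines, and $P$ can have no interior local time-minimum: at such a vertex the two edges of $P$ incident to it would both emanate from it, hence both be its unique outgoing edge, forcing its two $P$-neighbours to coincide. So the time coordinate along $P$ is unimodal, rising for $j$ steps to an apex $w_j$ and then falling for $m-j$ steps; since $P$ begins and ends at time $t$, one has $j=m-j$. On the ascending part each step of $P$ is an outgoing edge, so $f^{j}(u)=w_j$; on the descending part each step is an outgoing edge traversed backwards, so $f^{m-j}(v)=w_j$; as $j=m-j$ this reads $f^{j}(u)=w_j=f^{j}(v)$, i.e. $u\sim_f v$. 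Combining with the previous paragraph, the foils of $C$ are exactly the nonempty sets $C\cap(\{t\}\times\mathcal S)$, $t\in\mathbb{Z}$, and since $f$ maps the time-$t$ foil into the time-$(t+1)$ foil, their natural order is a sub-order of $(\mathbb{Z},\le)$.

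It remains to show that for every component $C$ the set $T_C:=\{t:\ C\cap(\{t\}\times\mathcal S)\neq\varnothing\}$ is all of $\mathbb{Z}$; this gives infinitely many foils with order type $\mathbb{Z}$. Since $C$ is $f$-invariant, $T_C$ is an up-set of $\mathbb{Z}$, so it suffices to show $T_C$ is not bounded below. Here I would pass to the Recurrence Time EFF $G^{\mathcal T}$ on $\mathbb{Z}$ with edges $t\mapsto t+\mathcal T_t$: every vertex of $B_X$ lies on a path issued from the time axis, so $C$ contains some $(t_0,s^*)$, and by the construction of $G^{\mathcal T}$ (the decomposition of a path of $B_X$ between successive visits to the time axis into $\mathcal T$-excursions, as used in the renewal case) the set $\widetilde C:=\{t:\ (t,s^*)\in C\}$ is precisely one connected component of $G^{\mathcal T}$, with $\widetilde C\subseteq T_C$. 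It therefore suffices to show that every connected component of $G^{\mathcal T}$ is unbounded below in $\mathbb{Z}$. Each such component is an infinite tree (following $t\mapsto t+\mathcal T_t$ forward gives an infinite ray, and there is no cycle) and $G^{\mathcal T}$ is unimodular by Proposition~\ref{Pr9}. I would then apply the mass transport principle to the transport in which each vertex lying in a component bounded below sends unit mass to the minimum of its component, all other vertices sending nothing: the mass out of the root is at most $1$, while the mass into the root equals $+\infty$ times the probability that the root is the minimum of a bounded-below component; finiteness forces that probability to be zero, and by stationarity of the component partition of $\mathbb{Z}$ no vertex is the minimum of a bounded-below component, so no component of $G^{\mathcal T}$ is bounded below. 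Hence $T_C=\mathbb{Z}$.

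The main obstacle is this last step. The first two steps are soft combinatorics internal to the tree $B_X$, but that the foil order is of type $\mathbb{Z}$ rather than of type $\mathbb{N}$ genuinely requires leaving $B_X$, which is not unimodular, for the Recurrence Time EFF, which is, in order to make the mass transport principle available; in the connected case $G^{\mathcal T}$ has vertex set all of $\mathbb{Z}$ and this step is immediate, so it is the disconnected case that carries the content.
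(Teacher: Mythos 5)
Your proof is correct and, for the core identification of foils with timeline intersections, follows essentially the same route as the paper: the paper's proof also takes the right-neighbour vertex shift, notes that $f^n$ advances time by exactly $n$ (so each foil lies in a single timeline), and argues conversely that two same-time vertices in one component have forward trajectories meeting at some $(t_0,y_0)$ after the same number $t_0-t$ of steps. You differ in two places, both to your credit. First, the paper simply \emph{asserts} that two vertices in the same connected component have meeting trajectories; your unimodal-path argument (the tree path between them has no interior local time-minimum, since such a vertex would have two outgoing edges, and hence rises to an apex and falls back) is a genuine justification of that assertion. Second, and more substantially, the paper's proof stops after the foil identification and treats ``infinitely many foils, ordered as $\mathbb{Z}$'' as immediate, whereas you correctly isolate the real content: the order type $\mathbb{Z}$ requires every connected component to meet every timeline, i.e.\ to be unbounded below in time, which is trivial only when $B_X$ is connected (every $(t,s^*)$ then lies in the single component). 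Your mass-transport argument on the Recurrence Time EFF, ruling out a component with a minimal element, closes exactly the step the paper leaves implicit, and does so without invoking the foil classification theorem, whose $\mathcal{I}/\mathcal{I}$ conclusion the paper only establishes in the connected case or under the extra hypothesis of Proposition~\ref{Prnew}.
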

\begin{proof}
Consider the vertex shift $f$ in the Bridge graph $B_{X}$, where $f$ maps each vertex $(x,y)$ to its right adjacent vertex. For each $t \in \mathbb{Z}$, let
\begin{equation*}
S_{t}= \{(x,y)\in B_{X} ; x=t  \}.
\end{equation*}
Suppose that for some $t \in \mathbb{Z}$ , $(t, y_{1})$ and $(t, y_{2})$ are in $S_{t}$ and in the same connected component. Since they are in the same connected component, there is a vertex $(t_{0}, y_{0}) \in B_{X}$ such that the trajectories of these two vertices meet each other. By definition of the Bridge Graph, the number of steps that it takes for vertices $(t, y_{1})$ and $(t, y_{2})$ to reach $(t_{0}, y_{0})$ is equal to $t_{0}-t$, so these two vertices belong to the same foil. \\
For the converse, note that if two vertices $(t_{1}, y_{1})$ and $t_{2}, y_{2})$ are in the same foil, then the trajectories of these two vertices meet each other after the same number of steps. Since by definition of the Bridge Graph and the vertex shift $f$, at each step, the trajectories move exactly one unit forward in time, it follows that $t_{1}$ and $t_{2}$ are equal.  
\end{proof}
\begin{proof}[Proof of Proposition \ref{Pr12}]
Consider the Bridge Graph as a network, i.e., a graph with marks on its vertices and edges. Suppose that the Bridge Graph, $B_{X}$, is a unimodularizable network. In the general case, as shown in Proposition \ref{Pr14}, in each connected component, the intersections of the Bridge Graph with vertical lines are the foils, and the foils are a.s. infinite. The foils form a covariant vertex partition of the Bridge Graph in the sense of \cite{Baccelli2017}. Moreover, the time axis is a covariant subset of the Bridge Graph if vertices are marked by their coordinates with respect to the time and state axis. \\
Using the no infinite/finite inclusion lemma in  \cite{Baccelli2017} for unimodular networks, the intersection of a foil with the time axis should also be infinite, because each foil is almost surely infinite. However, in the Bridge Graph, this intersection has only one element. So the Bridge Graph is not a unimodularizable network in the sense of Definition \ref{D5-4}.
\end{proof}
\begin{remark}
When the Bridge Graph is considered as a graph without any marks with respect to the time axis and the state axis coordinates, it is not unimodularizable, in general as well. Indeed, this is the case when the time axis is distinguishable in the Bridge Graph as a covariant subset and hence the same proof as Proposition  \ref{Pr12} holds. For example, in the Renewal Bridge Graph, the set R is a distinguishable set in the whole graph, so this graph is not unimodularizable. However, considering the Bridge Graph without its marks can also lead to situations where it is unimodularizable. For instance, the maximally coupled Bridge Graph of the lazy random walk in Example \ref{MCLRW} is a unimodular Bridge Graph rooted at $(0,0)$. 
\end{remark}
The $S$-set of the general null recurrent Bridge Graph has the same properties as in the Renewal Bridge Graph.  
\begin{proposition}
\label{Pr15}
Under the assumptions of Proposition \ref{Pr11} (or Proposition \ref{Prnew}), the cardinality of the $S$-set of a null recurrent Bridge Graph is a.s. infinite. 
\end{proposition}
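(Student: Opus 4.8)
The plan is to adapt the proof of Proposition~\ref{Pr7} to the general setting, using the Recurrence Time EFF $G^{\mathcal{T}}$ in place of the Renewal EFT. The structural input is that, under either hypothesis, each connected component of $G^{\mathcal{T}}$ is $\mathcal{I}/\mathcal{I}$: under the hypothesis of Proposition~\ref{Pr11} (that $B_X$ is a tree) this is Proposition~\ref{Pr9}, while under the meeting condition of Proposition~\ref{Prnew} it is the content of the remark following that proposition, since the assumed meeting property of $B_X$ transfers to the corresponding property of $G^{\mathcal{T}}$. I argue by contradiction, assuming that with positive probability the $S$-set of $B_X$ is finite.

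The first step is to match elements of the $S$-set with edges of $G^{\mathcal{T}}$ \emph{flying over zero}, meaning edges $t \to t+\mathcal{T}_t$ with $t<0<t+\mathcal{T}_t$. Fix $y\ne s^*$ in the $S$-set and set $t^-_y := \max\{t\le 0 : \text{the forward trajectory from }(t,s^*)\text{ reaches }(0,y)\}$; this maximum exists since $(0,y)\in B_X$, and $t^-_y<0$ because $y\ne s^*$. By maximality the forward trajectory from $(t^-_y,s^*)$ does not revisit $s^*$ strictly between times $t^-_y$ and $0$; denoting by $t^+_y>0$ its first return to $s^*$ after time $0$, one gets $\mathcal{T}_{t^-_y}=t^+_y-t^-_y$, so $t^-_y\to t^+_y$ is an edge of $G^{\mathcal{T}}$ flying over zero. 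Conversely, if $u\to v$ is any edge of $G^{\mathcal{T}}$ flying over zero, the forward trajectory from $(u,s^*)$ is at some state $y$ at time $0$, and $y\ne s^*$ because $v>0$; hence $(0,y)$ lies in the $S$-set, and because forward trajectories are unique the first return to $s^*$ of the trajectory through $(0,y)$ is again at $v$, i.e. $v=t^+_y$. Therefore the set $F$ of endpoints of edges of $G^{\mathcal{T}}$ flying over zero has cardinality at most that of the $S$-set, so it is a.s. finite on the event that the $S$-set is finite.

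The second step is a counting argument in $G^{\mathcal{T}}$. Since $\mathcal{T}_t\ge 1$ for all $t$, following the edges of $G^{\mathcal{T}}$ from any $t<0$ yields a strictly increasing sequence of vertices, which therefore eventually lands in $\{0,1,2,\dots\}$; let $v$ be the first non-negative vertex it reaches. Either $v=0$, or the last edge traversed before reaching $v$ flies over zero and so $v\in F$. In both cases $t$ is a descendant of a vertex of $F\cup\{0\}$ in $G^{\mathcal{T}}$. Since $\mathbb{Z}_{<0}$ (the negative integers) is infinite while $F\cup\{0\}$ is finite, some vertex of $F\cup\{0\}$ has infinitely many descendants in $G^{\mathcal{T}}$, with positive probability. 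This is impossible: each connected component of $G^{\mathcal{T}}$ is $\mathcal{I}/\mathcal{I}$, hence has no bi-infinite path; but every forward path in $G^{\mathcal{T}}$ is infinite (each vertex has out-degree $1$), so no vertex admits an infinite descendant path, and since the vertices of $G^{\mathcal{T}}$ a.s. have finite degree (as noted in the proof of Proposition~\ref{New3}), K\"onig's lemma rules out an infinite descendant set. This contradiction shows the $S$-set is a.s. infinite.

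The main obstacle is the bookkeeping in the first step. The correspondence between $S$-set elements and flying-over-zero edges is not a bijection — distinct flying edges may ``pass through'' the same vertex $(0,y)$ at time $0$ — and the argument needs, not a bijection, but only the bound on the cardinality of $F$; what delivers this is the observation that uniqueness of forward trajectories forces all flying edges through $(0,y)$ to share the endpoint $t^+_y$. The only other point to verify carefully is that the $\mathcal{I}/\mathcal{I}$ property of $G^{\mathcal{T}}$ is genuinely available in the disconnected case of Proposition~\ref{Prnew}, which is precisely the content of the remark immediately following it.
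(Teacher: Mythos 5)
Your proposal is correct and follows essentially the same route as the paper, which simply invokes Proposition \ref{Pr9} (and the remark after Proposition \ref{Prnew} for the disconnected case) and then repeats the flying-over-zero argument of Proposition \ref{Pr7}: a finite $S$-set would force all negative vertices of the Recurrence Time EFF to be descendants of a finite set of edge-endpoints, giving some vertex infinitely many descendants, which is incompatible with the $\mathcal{I}/\mathcal{I}$ class. The extra care you take in bounding $|F|$ by the cardinality of the $S$-set via uniqueness of forward trajectories, and in justifying the finite-descendants property of $\mathcal{I}/\mathcal{I}$ components via K\"onig's lemma, fills in details the paper leaves implicit but does not change the argument.
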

\begin{proof}
Proposition \ref{Pr9} states that the Recurrence time EFT of a null recurrent Bridge Graph is $\mathcal{I/I}$. Also, the proof of Proposition \ref{Prnew} shows that under its assumptions, the connected components of the Recurrence time EFT is $\mathcal{I/I}$. So the same proof as in the Renewal Bridge Graph in Proposition \ref{Pr7} holds here.
\end{proof}
\begin{proposition}
\label{Pr16}
Under the assumptions of Proposition \ref{Pr11} (or Proposition \ref{Prnew}), every vertex in the $S$-set of a null recurrent Bridge Graph has finitely many *-descendants a.s.
\end{proposition}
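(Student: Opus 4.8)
The plan is to reduce the claim to the $\mathcal{I/I}$ structure of the Recurrence Time EFF $G^{\mathcal T}$, mirroring the argument used for Proposition~\ref{Pr8} in the Renewal case. Fix a vertex $(0,y)$ of the $S$-set of $B_X$. Since every vertex of the Bridge Graph has exactly one outgoing edge and $\{X_t\}$ is recurrent, the forward trajectory issued from $(0,y)$ almost surely meets the time axis again; write $(t^+_y,s^*)$ for the first such meeting (with $t^+_y>0$, and $t^+_y=0$ when $y=s^*$), exactly as in Section~\ref{S4}. The first step is the inclusion that every $*$-descendant of $(0,y)$ is a descendant of $t^+_y$ in $G^{\mathcal T}$. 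Indeed, if $(t,s^*)$ is a $*$-descendant of $(0,y)$ then, by definition, the forward trajectory from $(t,s^*)$ passes through $(0,y)$; as the forward continuation of a vertex is unique, this trajectory then passes through $(t^+_y,s^*)$ as well. Listing the times at which this trajectory meets the time axis yields a $G^{\mathcal T}$-path containing both $t$ and $t^+_y$, and since $t\le 0<t^+_y$ the vertex $t^+_y$ is an ancestor of $t$ in $G^{\mathcal T}$. Hence the number of $*$-descendants of $(0,y)$ is bounded by the number of descendants of $t^+_y$ in $G^{\mathcal T}$.

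The second step records that, under the hypotheses, no vertex of $G^{\mathcal T}$ can have infinitely many descendants. Under the assumption of Proposition~\ref{Pr11} the graph $B_X$, and therefore $G^{\mathcal T}$, is connected, so by Proposition~\ref{Pr9} it is of class $\mathcal{I/I}$; under the assumption of Proposition~\ref{Prnew} the argument in the proof of that proposition (see also the remark following it) shows that every connected component of $G^{\mathcal T}$ is $\mathcal{I/I}$. In either case $G^{\mathcal T}$ is a unimodular forest no component of which contains a bi-infinite path, and, as already noted in the proof of Proposition~\ref{New3}, it is almost surely locally finite. If some vertex had infinitely many descendants, its descendant subtree would be an infinite locally finite tree, so by K\"onig's lemma there would be an infinite descending ray from that vertex; concatenated with its ancestor ray (which is infinite and meets no cycle, $G^{\mathcal T}$ being a forest) this would produce a bi-infinite path inside a single component, contradicting class $\mathcal{I/I}$. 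Hence, almost surely, every vertex of $G^{\mathcal T}$ has finitely many descendants, this event being a countable intersection over $t\in\mathbb{Z}$ of almost sure events.

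Combining the two steps, on the almost sure event that every vertex of $G^{\mathcal T}$ has finitely many descendants, each vertex $(0,y)$ of the $S$-set has at most the finite number of descendants of $t^+_y$ in $G^{\mathcal T}$ as $*$-descendants. Since the $S$-set is contained in the countable set $\{0\}\times\mathcal{S}$, this holds simultaneously for all of its vertices, which is the assertion. I expect the only genuinely delicate point to be the inclusion in the first step, namely checking that a $*$-descendant of $(0,y)$ is forced to pass through the return vertex $(t^+_y,s^*)$ and hence lies below $t^+_y$ in $G^{\mathcal T}$; everything after that is a direct appeal to the $\mathcal{I/I}$ classification of the Recurrence Time EFF, which is the exact counterpart of how Proposition~\ref{Pr8} was obtained for the Renewal Bridge Graph.
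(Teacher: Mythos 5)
Your proof is correct and follows essentially the same route as the paper, which simply transports the Renewal-case argument (Proposition \ref{Pr8}): reduce the $*$-descendants of $(0,y)$ to descendants of the return vertex $t^{+}_{y}$ in the Recurrence Time EFF and invoke its $\mathcal{I/I}$ classification. Your write-up is in fact more careful than the paper's on two points it leaves implicit --- stating the reduction as an inequality rather than an equality, and spelling out the K\"onig's-lemma step from ``infinitely many descendants'' to ``bi-infinite path'' --- but these are elaborations, not a different approach.
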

\begin{proof}
The proof of this property in the Renewal Bridge Graph was only based on its $\mathcal{I/I}$ nature. So the same proof holds here. 
\end{proof}
\section{$H^{T}$, and $H^{P}$ on the Null Recurrent Bridge Graph}
\label{Null}
This section considers the two dynamics, $H^{T}$ and $H^{P}$, defined in Section \ref{Main}  as dynamics on the measures on the state space of the null recurrent Bridge Graph, and studies their properties.\\
\subsection{Taboo Dynamics and Its Relation with the Invariant Measure of Null Recurrent MCs}The first dynamics is the Taboo Dynamics, $H^{T}$, defined in \eqref{004}. Theorem \ref{Propo1} states that this dynamics has at least one steady state.  
\begin{proof}[Proof of Theorem \ref{Propo1}]
Consider the Taboo PP as the initial state of the Taboo Dynamics. By the definition of this dynamics, at time one, there is mass one at state $s^{*}$.  Moreover, the mass of any arbitrary state $y \ne s^{*}$ is 
\begin{equation*}
{M}_{1}^{T}(y)= \sum _{x \in \mathcal{S} , x\ne s^{*}}{\tau}_{0}(x) \mathbbm{1}_{\{ h(x,\xi_{0}^{x})=y\}} + \mathbbm{1}_{\{ h(s^{*},\xi_{t}^{s^{*}})=y\}}.
\end{equation*}
Since for all $x\ne s^{*}$, $\tau_{0}(x)$ is the number of *-descendant of $x$ which are such that the first return to $s^{*}$ of the path starting from them takes place after time zero, ${M}_{1}^{T}(y)$ is also the number of *-descendants with the same property, that is $ \tau_{1}(y)={M}_{1}^{T}(y)$. It is clear from the Bridge Graph construction that $\tau_{1}\myeq \tau_{0} $. So the Taboo PP is a stationary state of the Taboo dynamics. 
\end{proof}
In the positive recurrent case, there is a known relation between the Bridge Graph and the stationary distribution of the Markov chain. In this last case, the Bridge Graph contains a unique bi-infinite path. So there is a point in the S-set with infinitely many descendants. It is proven in  \cite{Baccelli2018} that this point is a perfect sample of the stationary distribution of the Markov Chain. On the other hand, there is no bi-infinite path in the null recurrent Bridge Graph. All the points have finitely many descendants, and the approach of the positive recurrent case does not work. Instead, in the null recurrent case, the finite Taboo multiplicity defined in Definition \ref{D06} can be defined for the vertices on the $S$-set. Theorem \ref{T2} establishes a connection between this Taboo PP and the stationary measure of the null recurrent Markov Chain. \\
Before going to the proof of Theorem \ref{T2}, here is a classical lemma about computing stationary measure of the MC, using taboo probabilities. 
\begin{lemma}
\label{L02}
Let $X_{n}$ be an irreducible and recurrent Markov Chain. For fixed $i$ in the state space, let $\zeta$ be defined by
\begin{equation}
\label{5-7}
\zeta_{j}=\sum_{n=0}^{\infty} q_{ij}^{n}, \quad j=i, 
\end{equation}
where $q_{ij}^{n}$ is the probability for going from $i $ to $ j $, after $n$ steps, without visiting $i$. Then $\zeta$ is a positive invariant measure for the chain. This invariant measure is unique up to multiplication by a constant. 
\end{lemma}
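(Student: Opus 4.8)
The plan is to identify $\zeta$ with the occupation measure of a single excursion of the chain away from $i$, and to read off all three assertions from that representation. Let $T_i^{+}=\inf\{n\ge 1: X_n=i\}$ denote the first return time to $i$; irreducibility and recurrence give $\mathbb{P}_i(T_i^{+}<\infty)=1$. Since $q_{ij}^{n}=\mathbb{P}_i(X_n=j,\ T_i^{+}>n)$, summing over $n\ge 0$ yields
$$\zeta_j=\sum_{n=0}^{\infty}\mathbb{P}_i(X_n=j,\ T_i^{+}>n)=\mathbb{E}_i\!\left[\,\sum_{n=0}^{T_i^{+}-1}\mathbbm{1}\{X_n=j\}\right],$$
the expected number of visits to $j$ made during one excursion from $i$. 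In particular $\zeta_i=1$, since on $\{0,\dots,T_i^{+}-1\}$ the chain occupies $i$ exactly once, at time $0$. Every manipulation below involves only nonnegative quantities, so Tonelli's theorem justifies all interchanges of sums and expectations.

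First I would prove invariance, $\zeta P=\zeta$. Because $X_0=i=X_{T_i^{+}}$, a shift of the summation index gives $\zeta_k=\mathbb{E}_i\big[\sum_{n=1}^{T_i^{+}}\mathbbm{1}\{X_n=k\}\big]$; then, observing that $\{T_i^{+}>n\}$ is $\sigma(X_0,\dots,X_n)$-measurable and conditioning on $X_n$ via the Markov property,
$$\zeta_k=\sum_{n=0}^{\infty}\mathbb{P}_i(X_{n+1}=k,\ T_i^{+}>n)=\sum_{n=0}^{\infty}\sum_{j}\mathbb{P}_i(X_n=j,\ T_i^{+}>n)\,p_{jk}=\sum_{j}\zeta_j\,p_{jk}.$$
For positivity and finiteness, fix $j$ and choose $m,m'$ with $p^{(m)}_{ij}>0$ and $p^{(m')}_{ji}>0$ (irreducibility). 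From $\zeta=\zeta P^{m}$ one gets $\zeta_j\ge\zeta_i\,p^{(m)}_{ij}=p^{(m)}_{ij}>0$, and from $\zeta=\zeta P^{m'}$ one gets $1=\zeta_i\ge\zeta_j\,p^{(m')}_{ji}$, hence $\zeta_j\le 1/p^{(m')}_{ji}<\infty$. So $\zeta$ is a positive, everywhere finite invariant measure (of total mass $\mathbb{E}_i[T_i^{+}]$, which is finite or infinite according as the chain is positive or null recurrent).

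It remains to show uniqueness up to a multiplicative constant. Let $\mu$ be a nonzero invariant measure; by irreducibility $\mu$ is everywhere positive (if $\mu_j=0$ then $0=\mu_j=\sum_k\mu_k p^{(m)}_{kj}\ge\mu_l\,p^{(m)}_{lj}$ forces $\mu\equiv0$), so we may normalize $\mu_i=1$. Expanding $\mu_k=\sum_j\mu_j p_{jk}$ and repeatedly substituting $\mu_j=\sum_{j'}\mu_{j'}p_{j'j}$ for the indices $j\neq i$ — each step peeling off the term that carries index $i$ and discarding a nonnegative remainder — gives, for $k\neq i$, the bound $\mu_k\ge\sum_{n=1}^{N}q_{ik}^{n}$ for every $N$, hence $\mu_k\ge\zeta_k$; and $\mu_i=1=\zeta_i$. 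Consequently $\nu:=\mu-\zeta$ is a nonnegative invariant measure with $\nu_i=0$, and $\nu=\nu P^{m'}$ with $p^{(m')}_{ji}>0$ forces $\nu_j=0$ for all $j$. Thus $\mu=\zeta$, so every nonzero invariant measure is a scalar multiple of $\zeta$. I expect the only genuinely delicate point to be organizing this last iterated substitution so that the retained terms reassemble exactly into the taboo partial sums while the remainder stays nonnegative; this is precisely where recurrence is used (in the transient case the remainder does not vanish in the limit, and the uniqueness conclusion fails).
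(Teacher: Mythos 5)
Your proof is correct. Note, however, that the paper itself offers no proof of this lemma: it is stated explicitly as ``a classical lemma'' and used as a black box in the proof of Theorem \ref{T2}, so there is no in-paper argument to compare against. What you have written is the standard textbook proof (e.g.\ Norris, \emph{Markov Chains}, Theorems 1.7.5--1.7.6): identify $\zeta$ with the expected occupation measure of one excursion from $i$, verify invariance by the index shift $X_0=i=X_{T_i^+}$ together with the Markov property, get positivity and finiteness from irreducibility, and obtain uniqueness by the minimality bound $\mu\ge\zeta$ followed by the observation that $\nu=\mu-\zeta$ is a nonnegative invariant measure vanishing at $i$. All steps are sound; the Tonelli justifications and the measurability of $\{T_i^+>n\}$ are exactly the points that need saying. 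One small imprecision in your closing parenthetical: recurrence is not really needed to make the discarded remainder in the iterated substitution vanish (you only need the one-sided bound $\mu_k\ge\sum_{n\le N}q_{ik}^n$, which holds regardless); it is needed earlier, to ensure $T_i^+<\infty$ a.s.\ so that $\zeta$ is genuinely invariant rather than merely superinvariant --- in the transient case $(\zeta P)_i=\mathbb{P}_i(T_i^+<\infty)<1=\zeta_i$, and it is there that the argument breaks down. This does not affect the validity of your proof.
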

\begin{proof}[Proof of Theorem \ref{T2}]
Let $G^{\mathcal{T}}$ be the Recurrence Time EFF associated with $B_{X}$, and $Q_{t}$ be the path that starts from $(t,s^{*})$ in $B_{X}$, where $s^{*}$ is the reference point of the Bridge Graph.\\
Consider an arbitrary state $y$ in the state space of the MC. Define $g[G^{\mathcal{T}}, t, t']=1$, when $Q_{t}$ passes through vertex $(t',y)$ before returning to $s^{*}$. The mass transport principle states that, for each $y$ in the state space $S$, the following equation holds:
\begin{equation}
\label{5-16}
\mathbb{E} [\sum _{t\in\mathbb{Z}}g[G^{\mathcal{T}}, t, 0]] = \mathbb{E} [\sum _{t\in\mathbb{Z}}g[G^{\mathcal{T}}, 0, t]].
\end{equation}
The right-hand side of \eqref{5-16} is the expectation of the number of times that path $Q_{0}$ intersects the state $y$ before returning to $s^{*}$, whereas the left-hand side is equal to the expectation of the mass that the Taboo PP puts at point $y$, at time $0$. Since the right-hand side of the equation does not dependent on the coupling of $\{ \xi_{t}^{y} \}_{y\in S}$ for a given $t$, the same is true for the left hand side. So the expectation of the Taboo point process at each point in the state space does not dependent on the coupling of $\{ \xi_{t}^{y} \}_{t\in \mathbb{Z},y\in S}$, for fixed $t$. Moreover, using Lemma \ref{L02}, the right-hand side of \eqref{5-16} is equal to the invariant measure of the MC, and this shows that Equation \eqref{5-80} holds for the mean measure of the Taboo PP. 
\end{proof}
\begin{remark}
Notice that, by Definition \ref{D06}, the point $s^{*}$ always belongs to the $S$-set. Moreover, the mass that the Taboo PP puts at this point is equal to $1$ at all times. 
\end{remark}

\subsubsection{On the Constructibility of the Taboo Point Process}
 \label{SS4}
Let vertex $(y,0)$ belongs to the $S$-set, the support of the Taboo PP. Then due to Proposition \ref{Pr16}, $\tau_{0}(y)$, is a.s. finite. In this sense, one can say that the Taboo PP is locally finitely constructible, i.e., the taboo multiplicity at each vertex of the $S$-set depends on a finite subtree of the Bridge Graph. \\
The important question here is whether the Taboo multiplicities are algorithmically constructible. A positive answer to this question is equivalent to the possibility of producing a perfect sample of these two point processes. In general, the answer to this question is unknown. However, there are some results for the special case, where the MC is stochatically monotone. These results are given in the  following subsection. 
\subsubsection{Stochastically Monotone Markov Chain}
\label{711}
Here it is shown that in the case where the transition probabilities of the Markov chain are stochastically monotone, the Taboo point process is algorithmically locally constructible. The following definition is borrowed from \cite{Monotone}.
\begin{definition}
Assume that the state space of the Markov chain $\{ X_{n}\}_{n \in \mathbb{Z}}$, $\mathcal{S}$, is endowed with a partial order denoted by $\leq$. The Markov chain is stochastically monotone if its transition probabilities  have the stochastic monotone property. i.e., the probability measures $(P(x,.); x \in \mathcal{S})$, on $\mathcal{S}$ are such that $P(x',.)\preceq P(x,.)$ whenever $x'\leq x$, whereby $\preceq$ means stochastically less than or equal to. 
\end{definition}
\begin{proposition}
\label{Pr180}
Assume $X=\{ X_{n}\}_{n \in \mathbb{Z}}$ is a stochastically monotone Markov Chain on $\mathcal{S}$, and $\mathcal{S}$ has the minimum element $s_{0}$. Moreover, suppose that the Bridge Graph $B_{X}$ is constructed with the reference point ${s^{*}=s_{0}}$. Then the TPP is algorithmically locally constructible. 
\end{proposition}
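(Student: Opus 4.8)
The goal is to show that for a stochastically monotone chain with minimum state $s_0 = s^*$, the Taboo multiplicity $\tau_0(y)$ at each vertex $(0,y)$ of the $S$-set can be computed from a finite, \emph{effectively identifiable} subgraph of the Bridge Graph. The key structural observation is monotonicity: if $x' \le x$ and we drive both from the same randomness $\xi$, then the path from $x'$ stays (stochastically, but via the standard Strassen-type coupling realized by $h$, pathwise under the monotone update rule) below the path from $x$. In particular, since $s_0$ is the minimum state, a path started at $(t, s_0)$ lies weakly below \emph{every} other path that is alive at time $t$. The plan is to exploit this to detect coalescence with the time axis deterministically.

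First I would make precise the pathwise monotonicity of $h$: when $\mathcal S$ is partially ordered with minimum $s_0$ and $P$ is stochastically monotone, one can choose the update map $h(\cdot,\xi)$ to be monotone in its first argument for each fixed $\xi \in [0,1]$ (this is exactly the construction in \cite{Monotone}; I would cite it rather than reprove it). Consequently, along the Bridge Graph, for any $t$ the trajectory $Q_t$ started at $(t,s_0)$ satisfies: its state at any later time $t+k$ is $\le$ the state of any other trajectory that passes through time $t+k$ and was alive at time $t$. Since $s^* = s_0$ is the reference point, a trajectory returns to the time axis precisely when it hits the \emph{minimum} state; and because $Q_t$ is the lowest of all trajectories alive at time $t$, the first return time $\mathcal T_t$ of $Q_t$ is the \emph{smallest} first-return-to-$s_0$ time among all those trajectories. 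This is the crucial gain: to find the first return of $Q_t$ to $s^*$, I only need to simulate $Q_t$ itself forward, watch for it to hit $s_0$, and I get a certificate — I never need to know the (possibly uncomputable) whole Bridge Graph above it.

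Next I would turn this into an algorithm for $\tau_0(y)$. Recall $\tau_0(y)$ counts the $*$-descendants of $(0,y)$ whose path to the $S$-set does not visit $s^*$ strictly before time $0$. Equivalently, by the correspondence with the Recurrence Time EFF used in the proof of Theorem~\ref{T2}, $\tau_0(y)$ equals the number of times the path $Q_{t^-}$ (for the appropriate negative coalescence time) passes through $(0,y)$ before returning to $s^*$; but more operationally, I would: (i) enumerate candidate source times $t < 0$; for each, simulate the trajectory started at $(t,s^*)$ forward until it first returns to $s^*$ at some time $\mathcal T_t + t$ — this halts a.s. by recurrence and is verifiable; (ii) check whether this trajectory passes through $(0,y)$; (iii) collect all such trajectories that contribute. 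The set of relevant source times $t$ is a.s. finite — this is exactly the finiteness of the set $D$ / of the first-generation descendants in the Recurrence Time EFF established in Propositions~\ref{New3} and \ref{Pr16} — and, crucially, once I have simulated each trajectory to its (detected) return time I know with certainty whether it hits $(0,y)$ before returning. The monotone structure guarantees that simulating $Q_t$ alone suffices to detect its return: I never get "stuck" waiting to rule out a return via some higher trajectory I cannot see. So the algorithm halts almost surely and outputs $\tau_0(y)$ correctly.

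**Main obstacle.** The delicate point — and where I would spend the most care — is arguing that the search over source times $t<0$ can be made genuinely effective, i.e. that there is a computable stopping rule telling the algorithm it has found \emph{all} contributing trajectories, not merely that finitely many contribute. The resolution I would pursue: by monotonicity, $Q_0$ (started at $(0,s^*)$) is the lowest trajectory alive at time $0$; trace $Q_0$ \emph{backward}? — that does not work since edges are not invertible. Instead, the honest route is: a trajectory started at $(t,s^*)$ reaches the zero-timeline without having returned to $s^*$ iff at time $t$ it jumped "high enough" to still be above $s_0$ at time $0$ and stayed above $s_0$ throughout $(t,0)$; monotonicity makes "above $s_0$" the only obstruction, and one shows the contributing $t$'s form an initial segment determined by a single backward exploration of the finite fan-in of $(0,\cdot)$ in the Bridge Graph — which is finite and computable because each in-edge of a state-$0$-return segment is followed by a \emph{deterministic} descent (as in the Renewal chain, where $i \to i-1$ with probability one is replaced here by the monotone descent toward $s_0$). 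Making this fan-in argument airtight in the general monotone case, rather than just the renewal case, is the real work; everything else is bookkeeping with the already-established $\mathcal I/\mathcal I$ finiteness properties.
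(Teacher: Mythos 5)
There is a genuine gap, and it sits exactly where you flagged it. You correctly import the pathwise monotone coupling and even state the decisive fact --- a path started at $(t,s_0)$ lies weakly below every path alive at time $t$ --- but you then spend it on a non-issue: detecting the first return of a single trajectory $Q_t$ to $s^*$ never required monotonicity at all, since one simply simulates that one trajectory forward and checks whether it is at $s_0$ (recurrence makes this halt a.s.). The actual difficulty, a computable stopping rule certifying that \emph{all} contributing source times $t<0$ have been found, is the part you leave unresolved. Your proposed ``fan-in'' fix does not work in general: the edges of the Bridge Graph are oriented forward in time, so the in-neighbourhood of $(0,y)$ cannot be enumerated by a backward exploration, and the ``deterministic descent'' you invoke is a feature of the Renewal chain, not of an arbitrary stochastically monotone chain. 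As written, your procedure is a semi-algorithm: it enumerates trajectories and correctly decides each one's contribution, but never knows when to stop.

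The paper closes this gap with a one-line consequence of the monotonicity fact you already have, applied in the opposite direction: for $t_0<t_1$, the path $\mathcal{P}^{t_0}$ is at a state $\geq s_0=\mathcal{P}^{t_1}_{t_1}$ at time $t_1$, hence dominates $\mathcal{P}^{t_1}$ pointwise from then on. Consequently, when trajectories are added backward one starting time at a time, each newly added trajectory lies above the pointwise supremum of all previously added ones, so it either returns to $s^*$ before time $0$ (contributing nothing), adds mass to the topmost point so far discovered in the $S$-set, or creates a new point strictly above all previous ones. The $S$-set is therefore revealed in increasing order, and the stopping rule for $\tau_0(y)$ is simply: halt as soon as a point strictly above $y$ appears, at which moment the mass at every point $\leq y$ is final. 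This is the step your write-up is missing; with it, the rest of your argument (finiteness of contributors via the $\mathcal{I}/\mathcal{I}$ structure, verifiability of each trajectory's contribution) goes through.
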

\begin{proof}
By a classical coupling argument, there exists a coupling $\xi_{t}(.)$ for the driving sequence $\{\xi_{t}^{x}\}$, at each time $t$, such that if $y\leq z$, then $\xi_{t}(y) \leq \xi_{t}(z)$ (See \cite{Prop} and \cite{Paolo}). \\
Denote the path that starts from $(t_{0},s^{*}), t \in \mathbb{Z}$, by $\mathcal{P}^{t_{0}}= \{\mathcal{P}^{t_{0}}_{t}, t\geq t_{0}\} $. Note that if $t_{0}<t_{1}$, then 
\begin{equation*}
\{\mathcal{P}^{t_{0}}_{t_{1}} \} \geq s_{0}= \{\mathcal{P}^{t_{1}}_{t_{1}}\},
\end{equation*}
and hence, with the monotone coupling argument, the path $\mathcal{P}^{t_{0}}$, at each time, remains larger than or equal to the path $\mathcal{P}^{t_{1}}$.\\
For constructing the Taboo PP, first start trajectories from time $-1$ and step by step add trajectories. Due to the monotone coupling and the later argument, each new trajectory that is added is larger than or coalesces with the pointwise supremum of the trajectories considered before. So, when a new trajectory is added to the Bridge Graph, the image of this trajectory might belong to one of the following scenarios: 
\begin{enumerate}[]
\item Adds no mass to the S-set, 
\item Adds mass to the last point added before,
\item Creates a new point in the $S$-set, which is greater than all the points that have been built in the $S$-set before.
\end{enumerate}
With this observation, once a new point appears in the $S$-set, the mass of the Taboo PP for the points that are less than or equal to this new point is fully determined. 
\end{proof}
\begin{remark}
Consider the random walk defined on the state space $\mathbb{N}$ with the following transition probabilities: for $n=0$, the walk stays at $0$, with probability $1/2$, and move to state $1$ with probability $1/2$. For state $n \in \mathbb{N}, n \ne 0$, the walk moves to each neighbor of $n$, i.e., $n+1$ and $n-1$ with probability $1/2$. This Markov chain is an example that satisfies the assumption of the MC in Proposition \ref{Pr18}. 
\end{remark}
With this algorithm described above, when a new path is added to the Bridge graph from time $t$, it might add a new mass to the $S$-set or not. So computing the expectation of the time until a new mass is added to the $S$-set gives information about the time it takes to construct the taboo multiplicities locally. This expectation is computed in the following corollary. 
\begin{lemma}
\label{L03}
Consider the assumptions of Proposition \ref{Pr180} and the algorithm that is presented in its proof. If $\mathcal{P}^{t_{1}}$ is a path that adds mass to the $S$-set, then for all $t< t_{1}$, $\mathcal{P}^{t}$ does not intersect the time axis (the state $s^{*}$ of the state space) between times $t_{1}$ and $0$. 
\end{lemma}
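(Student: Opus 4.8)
The plan is to argue by contradiction, exploiting the monotone coupling from the proof of Proposition \ref{Pr180}. Suppose $\mathcal{P}^{t_1}$ adds mass to the $S$-set, but there exists some $t < t_1$ such that $\mathcal{P}^{t}$ visits $s^{*}$ at some time $r$ with $t_1 < r \leq 0$. I want to derive a contradiction with the claim that $\mathcal{P}^{t_1}$ "adds mass to the $S$-set" under the step-by-step construction, where trajectories are added in increasing order of starting time.

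First I would recall the key monotonicity fact established inside the proof of Proposition \ref{Pr180}: under the monotone coupling $\xi_t(\cdot)$, if $t<t_1$ then for every time $u \geq t_1$ one has $\mathcal{P}^{t}_{u} \geq \mathcal{P}^{t_1}_{u}$, since at time $t_1$ one has $\mathcal{P}^{t}_{t_1} \geq s_0 = s^{*} = \mathcal{P}^{t_1}_{t_1}$ and monotonicity is preserved forward in time. Now if $\mathcal{P}^{t}$ hits $s^{*}=s_0$ (the minimum of $\mathcal{S}$) at some time $r$ with $t_1<r\le 0$, then at that time $\mathcal{P}^{t}_{r}=s_0 \le \mathcal{P}^{t_1}_{r}$, and combined with the reverse inequality $\mathcal{P}^{t}_{r}\geq \mathcal{P}^{t_1}_{r}$ we get $\mathcal{P}^{t_1}_{r}=s_0=s^{*}$ as well; that is, both paths sit at $s^{*}$ at time $r$, and hence they have coalesced by time $r\le 0$.

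The second step is to turn this coalescence into the desired contradiction. Since $\mathcal{P}^{t}$ was added to the Bridge Graph strictly before $\mathcal{P}^{t_1}$ (as $t<t_1$), at the moment $\mathcal{P}^{t_1}$ is added its image is, by the three-scenario dichotomy in the proof of Proposition \ref{Pr180}, either (i) adding no mass to the $S$-set, (ii) adding mass to the last $S$-set point created before, or (iii) creating a genuinely new $S$-set point above all previous ones. "Adds mass to the $S$-set" for a freshly added path means precisely that the $S$-set at time $0$ gains a point or multiplicity that was not already accounted for — i.e. case (ii) or (iii). But we have just shown $\mathcal{P}^{t_1}$ coalesces with the earlier path $\mathcal{P}^{t}$ at a time $r \le 0$ which is strictly after $t_1$ (so the coalescence is genuinely "in the interior", before reaching time $0$). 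After coalescence the two paths are identical, so at time $0$ the path $\mathcal{P}^{t_1}$ contributes exactly the same vertex of the $S$-set (if any) as $\mathcal{P}^{t}$ already did, contributing nothing new — this is scenario (i). This contradicts the hypothesis that $\mathcal{P}^{t_1}$ adds mass to the $S$-set. Hence no such $t$ exists, which is the assertion of the lemma.

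The main obstacle I anticipate is pinning down precisely, in the bookkeeping of the construction, what "adds mass to the $S$-set" formally means relative to paths added earlier, and ensuring the boundary cases are handled: in particular that $r$ is strictly greater than $t_1$ (so a true coalescence occurs strictly inside the window $(t_1,0]$ rather than at the trivial endpoint $t_1$ itself, where both paths start at $s^{*}$ by definition). Care is also needed to check that the inequality $\mathcal{P}^{t}_{u}\ge \mathcal{P}^{t_1}_{u}$ is genuinely valid for all $u\ge t_1$ and does not silently require $t$ itself to be among the already-added trajectories in a way that breaks when several trajectories are inserted between $t$ and $t_1$; but this follows from transitivity of the coupling-monotonicity applied pairwise, so it should go through cleanly.
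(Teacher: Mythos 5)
Your proof is correct and is essentially the paper's argument: both rest on the monotone-coupling inequality $\mathcal{P}^{t}_{u}\geq \mathcal{P}^{t_1}_{u}$ for $u\geq t_1$ together with the fact that $s^{*}=s_0$ is the minimum of $\mathcal{S}$, so that $\mathcal{P}^{t}_{r}=s_0$ forces $\mathcal{P}^{t_1}_{r}=s_0$. You merely run this in contrapositive form, and your extra detour through coalescence is harmless since the decisive fact is simply that $\mathcal{P}^{t_1}$ itself visits $s^{*}$ before time zero, which directly contradicts ``adds mass'' in the sense of Definition \ref{D06}.
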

\begin{proof}
Consider the path $\mathcal{P}^{t_{0}}$, with $t_{0}< t_{1}$. For all $t > t_{1}$, $\mathcal{P}_{t}^{t_{0}} \geq \mathcal{P}_{t}^{t_{1}}$. Since the path $\mathcal{P}^ {t_{1}}$ adds mass to the $S$-set; it does not intersect the time axis up to time zero, so the same holds for the path $\mathcal{P}^{t_{0}}$.
\end{proof}
\begin{corollary}
Under the assumptions of Lemma \ref{L03},  Suppose that $t_{1}$ and $t_{1}-T$ are two successive times such that the paths $\mathcal{P}^{t_{1}}$, and $\mathcal{P}^{t_{1}-T}$ add mass to the $S$-set, Then 
\begin{equation}
\mathbb{E}[T]=\infty.
\end{equation}
\end{corollary}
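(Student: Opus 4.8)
The plan is to relate the successive "mass-adding" times in the monotone construction to the return times of the tagged Markov chain, and then invoke null recurrence to force the infinite mean. Recall from Lemma~\ref{L03} that if $\mathcal{P}^{t_{1}}$ adds mass to the $S$-set, then every earlier path $\mathcal{P}^{t}$ with $t<t_{1}$ fails to hit the state $s^{*}$ anywhere in the interval $(t_{1},0]$. In particular, taking $t = t_{1}-T$, the path $\mathcal{P}^{t_{1}-T}$ does not return to $s^{*}$ between times $t_{1}-T+1$ and $0$; yet by hypothesis it does add mass to the $S$-set, so it reaches time $0$ at some state of the $S$-set without having visited $s^{*}$ in between. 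This says precisely that the first return time $\mathcal{T}_{t_{1}-T}$ of the path started at $(t_{1}-T, s^{*})$ to the time axis satisfies $\mathcal{T}_{t_{1}-T} > T$ (indeed $\mathcal{T}_{t_{1}-T} \ge -(t_{1}-T) = T - t_{1} \ge T$ when $t_1 \le 0$; the general case is handled by translation invariance, so we may assume $t_1 \le 0$).

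First I would make this stochastic-domination step precise: conditionally on the event that $\mathcal{P}^{t_{1}}$ adds mass, the gap $T$ to the previous mass-adding path is at least as large as a fresh copy of the first-return time $\mathcal{T}_{0}$ of the Bridge-Graph path started on the time axis — or rather, $T$ dominates such a return time conditioned to be "not yet completed," which only makes it larger. Since the Markov chain $\{X_n\}$ is null recurrent, the first return time $T_0^{+}$ to $s^{*}$ has infinite mean, i.e. $\mathbb{E}[\mathcal{T}_{0}] = \infty$ (this is recorded in the remark following the definition of the Recurrence Time EFF: $\mathbb{E}(\mathcal{T}_t) = \infty$ for all $t$). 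A cleaner route avoids the conditioning subtlety: the event $\{T \ge n\}$ contains the event that the path $\mathcal{P}^{t_1 - n}$ started $n$ units before $t_1$ does not revisit $s^{*}$ before time $t_1$, whose probability is $\mathbb{P}(\mathcal{T}_{0} > n)$; summing over $n$ and using $\sum_n \mathbb{P}(\mathcal{T}_0 > n) = \mathbb{E}[\mathcal{T}_0] = \infty$ gives $\mathbb{E}[T] = \infty$.

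More carefully, the key identity I would establish is that $\{T > n\} \supseteq E_n$, where $E_n$ is the event that, in the coupled construction, when the path from time $t_1 - n$ is added it neither adds mass at a point less than or equal to the current supremum's terminal point nor reaches $s^{*}$ on the time axis before time $0$ — equivalently, $\mathcal{T}_{t_1 - n} > n$. By stationarity of the driving sequence in $t$, $\mathbb{P}(\mathcal{T}_{t_1-n} > n) = \mathbb{P}(\mathcal{T}_0 > n)$. Because the two events $\{t_1 \text{ adds mass}\}$ and $\{\mathcal{T}_{t_1 - n} > n\}$ can be arranged to be positively correlated (both are "downward" events in the monotone coupling, concerning non-return to $s^{*}$), or at worst by a direct lower bound that bypasses the conditioning, one obtains
\begin{equation}
\mathbb{E}[T] \;=\; \sum_{n \ge 1} \mathbb{P}(T \ge n) \;\ge\; \sum_{n \ge 1} \mathbb{P}(\mathcal{T}_0 > n) \;=\; \mathbb{E}[\mathcal{T}_0] \;-\; 1 \;=\; \infty,
\end{equation}
where the last equality uses null recurrence of the Markov chain.

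The main obstacle is the conditioning on the event that $t_1$ (and $t_1 - T$) are successive mass-adding times, which is not independent of the return time of the intervening paths; one must argue that this conditioning can only lengthen $T$, using the monotone coupling of Proposition~\ref{Pr180} and Lemma~\ref{L03} (earlier paths dominate later ones and hence are "even less likely" to have returned to the minimal state $s^{*} = s_0$). Making the domination rigorous — ideally by identifying an unconditional lower bound for $\mathbb{P}(T \ge n)$ in terms of $\mathbb{P}(\mathcal{T}_0 > n)$, so that the conditioning never has to be confronted directly — is the crux of the argument; once that is in hand, the conclusion is immediate from $\mathbb{E}[\mathcal{T}_0] = \infty$.
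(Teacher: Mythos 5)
There is a genuine gap, and it sits exactly where you flag it yourself: the event inclusion $\{T\ge n\}\supseteq\{\mathcal{T}_{t_{1}-n}>n\}$ underlying your ``cleaner route'' is false, and in fact points the wrong way. Given that $t_{1}$ adds mass, $\{T\ge n\}$ is the event that \emph{none} of the intermediate paths $\mathcal{P}^{t_{1}-1},\dots,\mathcal{P}^{t_{1}-n+1}$ adds mass, i.e.\ that each of them \emph{does} visit $s^{*}$ strictly before time $0$. The hypothesis that the earlier, pointwise larger path $\mathcal{P}^{t_{1}-n}$ stays above $s^{*}=s_{0}$ up to time $t_{1}$ puts no constraint on these smaller paths, each of which may still avoid $s_{0}$ until time $0$ and add mass, forcing $T<n$; worse, if $\mathcal{P}^{t_{1}-n}$ avoids $s^{*}$ all the way to time $0$ it is itself mass-adding and then $T\le n$. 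The only pathwise relation you actually establish, $\mathcal{T}_{t_{1}-T}\ge T-t_{1}$, bounds a return time from below by $T$, which is the wrong direction for $\mathbb{E}[T]=\infty$. The monotone coupling gives the \emph{complementary} inclusion: if $\mathcal{P}^{t_{1}-n}$ \emph{does} hit $s_{0}$ at time $t_{1}$, then every later path is squeezed onto $s_{0}$ at time $t_{1}$, so none of them can add mass and $T\ge n$; hence $\mathbb{P}(T\ge n)\ge \mathbb{P}(\mathcal{P}^{t_{1}-n}_{t_{1}}=s_{0})=p^{(n)}_{s_{0}s_{0}}$ (the $n$-step return probability, the region left of $t_{1}$ being unexplored), and $\sum_{n}p^{(n)}_{s_{0}s_{0}}=\infty$ by recurrence. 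This is, up to phrasing, the paper's proof: by Lemma~\ref{L03} all descendants of $t_{1}$ in the Recurrence Time EFT lie in $(t_{1}-T,t_{1})$, so $T$ exceeds their number, whose expectation is the Green's function $\sum_{n}p^{(n)}_{s_{0}s_{0}}=\infty$ --- not the tail sum $\sum_{n}\mathbb{P}(\mathcal{T}_{0}>n)$ that you aim for.

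A concrete check that no unconditional tail comparison with $\mathcal{T}_{0}$ can work: for the (non-monotone) null recurrent Renewal Markov Chain the events ``$-k$ adds mass'' $=\{\eta_{-k}\ge k\}$ are independent, so with $t_{1}=-1$ one gets $\mathbb{P}(T>n)=\prod_{j=2}^{n+1}\bigl(1-\mathbb{P}(\eta\ge j)\bigr)\approx\exp(-c\,n^{1-\alpha})$ when $\mathbb{P}(\eta\ge j)\sim cj^{-\alpha}$ with $\alpha<1$, whence $\mathbb{E}[T]<\infty$ even though $\mathbb{E}[\mathcal{T}_{0}]=\infty$. The conclusion of the corollary therefore genuinely requires the squeezing property of the monotone coupling (Lemma~\ref{L03}), and your argument would need to invoke it in the corrected direction above to close the gap.
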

\begin{proof}
Since $t_{1}-T < t_{1}$, Lemma \ref{L03} gives that for all $t< t_{1}-T$, the path $\mathcal{P}^{t}$ does not intersect the time axis at time $t_{1}$. So vertex $(t_{1},s^{*})$ does not have any descendants before time $t_{1}-T$ in $B_{X}$. Consequently, in the Recurrence Time EFT, the vertex $t_{1}$ does not have descendants before vertex $t_{1}-T$. So,  
\begin{equation}
\label{5-19}
\mathbb{E}[T]>\mathbb{E}[\text{Number of descendants of $t_{1}$ in the Recurrence Time EFT}].
\end{equation}
Note that the backward construction of the Bridge Graph, as mentioned before, starts from time zero and explores the graph step by step in the past. When it is constructed up to time $t_{1}$, the left-hand side of $t_{1}$, in the Bridge Graph, is not explored yet. So the distribution of the Bridge Graph before time $t_{1}$ is the same as the original distribution of the Bridge Graph. The original distribution in the Recurrence Time EFT, is such that the expectation of the number of the $*$-descendants of any vertex is infinite. So the right-hand side of Equation \eqref{5-19} is infinite, and thus $\mathbb{E}[T]=\infty$. 
\end{proof}
\subsection{Potential Dynamics} The second dynamics defined on the null recurrent Bridge Graph is the Potential Dynamics. Theorem \ref{Pr18} states that this dynamics also has at least one steady state.
\begin{proof}[Proof of Theorem \ref{Pr18}]
Consider the Potential PP as the initial state of the Potential Dynamics, where the dynamics is associated to a null recurrent MC. Let $y\ne s^{*}$ be a state in the state space $\mathcal{S}$. The mass that the Potential Dynamics puts at $y$ at time $1$, is equal to 
\begin{equation}
{M}_{1}^{P}(y)=
    \sum _{x \in \mathcal{S}}{\pi}_{0}(x) \mathbbm{1}_{\{ h(x,\xi_{0}^{x})=y\}}.
\end{equation}
That is, it is obtained by adding all the masses that enter the state $y$, via the Bridge Graph's edges, from time $0$. Since for each $x\in \mathcal{S}$, ${\pi}_{0}(x)$ is equal to the number of its $*$-descendants, ${M}_{1}^{P}(y)$ is equal to the number of $*$-descendants of vertex $(1,y)$ in the Bridge Graph, which is  the potential multiplicity of this vertex. The same argument shows that the result holds for $y=s^{*}$, with this difference that  $s^{*}$ itself should be counted once. So the Potential PP is a steady-state of the Potential Dynamics. 
\end{proof}
Theorem \ref{T3}  shows the connection between the steady state of the Potential Dynamics and the null recurrent MC associated with it. This connection is related to the potential matrix  $R$ of the MC with entries $R_{xy}$, where $R_{xy}$  is the expected number of visiting of the state $y$ given that the initial state of the MC is the state $x$. For a recurrent MC, the entries of the potential matrix are all equal to infinity. 
\begin{proof}[Proof of Theorem \ref{T3}]
Let $G^{\mathcal{T}}$ be the Recurrence Time EFT (EFF) associated with $B_{X}$, and $Q_{t}$ the path that starts from $(t,s^{*})$ in $B_{X}$, where $s^{*}$ is the reference point of the Bridge Graph.\\
Consider an arbitrary state $y$ in the state space of the MC. Define $g[G^{\mathcal{T}}, t, t']=1$, when $Q_{t}$ passes through the vertex $(t',y)$. Using the mass transport principle, for each $y$ in the state space $S$ the following equation holds:
\begin{equation}
\label{5-16}
\mathbb{E} [\sum _{t\in\mathbb{Z}}g[G^{\mathcal{T}}, t, 0]] = \mathbb{E} [\sum _{t\in\mathbb{Z}}g[G^{\mathcal{T}}, 0, t]].
\end{equation}
The right-hand side of the \eqref{5-16} is the expectation of the number of times that path $Q_{0}$ intersects the state $y$, which its expectation is equal to $R_{s^{*}y}$. On the other hand, the left-hand side of \eqref{5-16} is equal to the expectation of the mass that the Potential PP puts at point $y$, at time $0$. Since $R_{s^{*}y}$ is related to a recurrent MC, the mean measure of the Potential PP at each point is infinity. 
\end{proof}
Consider the null recurrent Bridge Graph. Proposition \ref{Pr16} states that in this case, for each $(0,y)$ in the $S$-set, $\pi_{0}(y)$ is a.s. finite. So one  can consider the algorithmic constructibility of the Potential PP as the Taboo PP. The same result as Proposition \ref{Pr180} is valid for the Potential PP.  
\begin{proposition}
\label{Pr190}
Assume $X=\{ X_{n}\}_{n \in \mathbb{Z}}$ is a stochastically monotone Markov Chain on $\mathcal{S}$, and $\mathcal{S}$ has the minimum element $s_{0}$. Moreover, suppose that the Bridge Graph $B_{X}$ is constructed with the reference point ${s^{*}=s_{0}}$. Then the Potential PP is algorithmically locally constructible. 
\end{proposition}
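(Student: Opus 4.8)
The plan is to follow the proof of Proposition~\ref{Pr180} essentially line for line, the only structural change being that, for the Potential dynamics, there is no ``taboo'' restriction selecting which $*$-descendants are counted. First I would invoke the classical coupling construction (as in \cite{Prop} and \cite{Paolo}) to realize the driving sequence $\{\xi_t^x\}$ through maps $\xi_t(\cdot)$ that are monotone for a fixed $t$, i.e.\ $\xi_t(y)\le\xi_t(z)$ whenever $y\le z$. Writing $\mathcal{P}^{t_0}=\{\mathcal{P}^{t_0}_t,\ t\ge t_0\}$ for the trajectory of $B_X$ issued from $(t_0,s^*)$, the hypothesis $s^*=s_0=\min\mathcal{S}$ gives $\mathcal{P}^{t_0}_{t_1}\ge s_0=\mathcal{P}^{t_1}_{t_1}$ for $t_0<t_1$, and propagating this inequality through the monotone updates yields $\mathcal{P}^{t_0}_t\ge\mathcal{P}^{t_1}_t$ for every $t\ge t_1$. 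In particular, the landing points $\mathcal{P}^{-1}_0,\mathcal{P}^{-2}_0,\dots$ form a non-decreasing chain in $\mathcal{S}$, totally ordered even though $\mathcal{S}$ itself may only be partially ordered.

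Next I would describe the backward construction of $B_X$: add the trajectories $\mathcal{P}^{-1},\mathcal{P}^{-2},\dots$ one at a time. Here is the one place where the Potential case genuinely differs from the Taboo case. In the Taboo construction a newly added $\mathcal{P}^t$ may contribute no mass at all (when its path revisits $s^*$ strictly between times $t$ and $0$), so three scenarios had to be distinguished. For the Potential PP, instead, the vertex $(t,s^*)$ is \emph{unconditionally} a $*$-descendant of $(0,\mathcal{P}^{t}_0)$, so adding $\mathcal{P}^{t}$ always increments $\pi_0$ by exactly one at the single point $(0,\mathcal{P}^{t}_0)$ (with the usual convention that $\pi_0(s^*)$ counts $s^*$ itself once). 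Combined with the chain property of the previous paragraph, this shows that each newly added trajectory either lands on the current largest point of the $S$-set discovered so far, or creates a brand new point of the $S$-set strictly above all previous ones; in other words the $S$-set is revealed from the bottom up, and its multiplicities accumulate one unit per added trajectory.

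I would then close the argument as follows. Fix $y$ in the $S$-set with $y\ne s^*$. Since each vertex has a unique outgoing edge, the map $(t,s^*)\mapsto(0,\mathcal{P}^t_0)$ sends the $*$-descendants of time-$0$ vertices bijectively onto $\{t<0\}$, and $\{t<0:\mathcal{P}^{t}_0=y\}$ is exactly the set of $*$-descendants of $(0,y)$, hence a.s.\ finite by Proposition~\ref{Pr16} (the case $y=s^*$ is identical up to the harmless $+1$). Because the non-decreasing chain $(\mathcal{P}^{-t}_0)_{t\ge1}$ has infinitely many terms and attains every value only finitely often, it strictly increases infinitely often; consequently the $S$-set is a.s.\ infinite (recovering Proposition~\ref{Pr15} in this setting) and, crucially, a.s.\ after finitely many steps some added trajectory lands at a point strictly above $y$. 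From that step onwards every further trajectory lands at a point $\ge$ that one, hence never again at $y$, so $\pi_0(y)$ is frozen and equals the finite count accumulated so far. Thus $\pi_0(y)$ is determined by an a.s.\ finite sub-exploration of $B_X$, and the stopping rule ``continue until a point strictly larger than $y$ appears in the $S$-set'' is an effective rule; this is precisely algorithmic local constructibility of the Potential PP.

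The step I expect to require the most care — rather than a genuine obstacle, since the skeleton is borrowed from Proposition~\ref{Pr180} — is checking that dropping the taboo restriction does not spoil the ``discovered bottom-up, then frozen'' structure: one must verify that every added trajectory really does contribute (so that the accumulated counts never need later correction), that the monotone domination still forces a totally ordered non-decreasing chain of landing points when $\mathcal{S}$ is only partially ordered, and that the minimality $s^*=s_0$ is what aligns the trajectories so that this chain behaves as claimed. The termination half then rests squarely on the a.s.\ finiteness of every Potential multiplicity (Proposition~\ref{Pr16}), together with the elementary observation that a non-decreasing chain with all level sets finite must be strictly increasing infinitely often.
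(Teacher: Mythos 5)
Your argument is correct and follows essentially the same route as the paper, whose proof simply reuses the backward construction with the monotone coupling from Proposition~\ref{Pr180} and observes that once a new point appears in the $S$-set, the Potential masses at all points less than or equal to it are fully determined. The extra details you supply (that every added trajectory contributes exactly one unit of mass, and that termination at a given point follows from the a.s.\ finiteness of the Potential multiplicities in Proposition~\ref{Pr16}) merely make explicit what the paper leaves implicit.
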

\begin{proof}
Consider the backward construction algorithm for the Bridge Graph with the monotone coupling introduced in the proof of Proposition  \ref{Pr180}. The same argument as in the proof of Proposition \ref{Pr180} shows that, once a new point appears in the $S$-set, the mass of thePotential PP for the points that are less than or equal to this new point is fully determined. So the claim is proved. 
\end{proof}
\begin{remark}
Let $X$ satisfy the assumptions of Proposition \ref{Pr190}, and $B_{X}$ be its associated Bridge Graph constructed by the monotonically coupled driven sequence.  Although the same step-by-step backward construction algorithm for the Bridge Graph gives the potential multiplicities of the point in the $S$-set, one can consider a faster algorithm for constructing the potential multiplicities. \\
For showing this, let $(t_{0},s^{*})$ and $(t_{1},s^{*})$, where $t_{0}<t_{1}$, be two vertices in the time axis that add mass to the same vertex $(0,y)$ in the $S$-set when the potential multiplicity is considered. Then all the vertices $(t_{i},s^{*})$, where $t_{0}<t_{i}<t_{1}$, add mass to the potential multiplicity of $(0,y)$. So, for finding the Potential multiplicity of $(0,y)$, it is sufficient to find the first time and the last time that add mass to $(0,y)$. So instead of constructing the Bridge Graph step by step, one can use the \emph{exponential search} algorithm (see \cite{EX}) to find the last time that adds mass to vertex $(0,y)$. Then use \emph{binary search} for finding the first time. If $T$ is the position of the search time, then exponential search takes $O(logT)$ time to find $T$. So this new algorithm for finding the Potential multiplicity is faster than the step-by-step construction.
\end{remark}
\section{Perfect Sampling of the Taboo and Potential PPs in the Critical Single Server Queue}
\label{sec:loy}
\subsection{Loynes' Theory}
This section is focused on the application of the results of the previous sections to the $GI/GI/1$ queue and its associated workload Markov Chain \cite{loynes_1962}. The service times $\{\varsigma_{n}; n \in \mathbb{Z} \}$ are assumed to be i.i.d.  with finite and nonzero mean $\mathbb{E}[\varsigma]$. The inter-arrival times are i.i.d. and denoted by $\{\upsilon_{n}; n \in \mathbb{Z} \}$, with finite and nonzero mean $\mathbb{E}[\upsilon]$.  That is, $\upsilon_{n}=T_{n}-T_{n-1}$, where $T_{n}$ is the arrival time of the $n$-th customer. Let $W_{n}=W(T_{n}-)$ denote the workload just before time $T_{n}$, which is the amount of service remaining to be done by the server at that time. 
This workload process satisfies the equation
\begin{equation}
\label{Workload}
W_{n+1}=(W_{n}+\varsigma_{n}- \upsilon_{n})^{+}, \quad \forall n \in \mathbb{Z},
\end{equation}
where $(a)^{+}=\max(a,0)$. Because of the i.i.d. assumptions, (\ref{Workload}) defines an $\mathbb N$-valued Markov Chain. To avoid degenerate cases, it is assumed below that the
variance of $\varsigma-\upsilon$ is non zero and that $\{\varsigma_n\}_n$
and $\{\upsilon_n\}_n$ are independent, although these assumptions are not essential. The \textit{traffic intensity} is $\rho=\frac{\mathbb{E}[\varsigma]}{\mathbb{E}[\upsilon]}$. It is well known that when $\rho<1$, this Markov Chain is positive recurrent and when $\rho>1$, it is transient. In the critical case, when $\rho=1$, it is null recurrent. \\ 
In the case where $\rho<1$, Loynes' theory allows one to define a perfect sample from the stationary distribution \cite{loynes_1962}. 
This section extends this theory to the perfect sampling of the Taboo PP using the algorithm that is provided in Subsection \ref{711}, which applies since this Markov Chain is stochastically monotone.
%The backward construction of R.M. Loynes \cite{loynes_1962}
%for the $G/G/1$ queue allows one to exemplify the 
%use of the Taboo point process in queuing theory.

The Loynes variable at time $n\ge 0$, $L_n$, is the value of the workload at time 0
when starting the queue empty at time $-n$, and when coupling the service and
inter-arrival times as in the CFTP algorithm, namely
$$ L_n= \left(\max_{k=1,\ldots,n}\sum_{l=-n}^{-1}(\varsigma_l-\upsilon_l)\right)^+.$$
The sequence $\{L_n\}_n$ is non-decreasing.
%In the case where the service and inter-arrival times are i.i.d. and integer
%valued, the workload is an integer-valued Markov Chain, so that the
%framework discussed in the present paper applies.
\subsection{Interpretation and Perfect Sampling of the Taboo PP}
It is easy to see that $M_0^T$, the Taboo PP of this Markov Chain, is simple, and that its support is $\{L_n\}_{n\ge 0}$.
Indeed, adding customers $-n$ in the past leads to a new atom in this PP if and only if
this addition creates a busy period that starts at the arrival of customer $-n$ and lasts until time 0.
The Taboo PP at time 0 is hence equal to 
\begin{equation}
\label{QTA}
 M^T_0= \delta_0+ \sum_{n\ge 1} \delta_{L_n} 1\{L_n>L_{n-1}\}.
 \end{equation}
It captures the {\em joint structure of the workload strict records} in this Loynes type (or CFTP) construction.

In the positive recurrent case, the Loynes sequence converges to an a.s.
finite limit $L_\infty$; the Taboo PP is then a.s. finite and the supremum value of its
support is the perfect sample $L_\infty$ of the steady state workload.
In the null recurrent case, it has an infinite support but is locally finite.

Figure \ref{TabooSample} provides perfect samples of the random measure $M^{T}_0$ restricted to bounded intervals 
for different inter-arrival and service time distributions.
The samples are perfect because the chain is stochastically monotone.

A corollary of the result on the first moment measure of the Taboo PP is that,
in the recurrent case, the invariant measure of this
Markov Chain admits the representation
\begin{equation}
\label{NEWTABOO2}
 \sigma(i)=\mathbb{E} M^T_0(i)=\sum_{n\ge 1} \mathbb{P}[L_n=i,L_n>L_{n-1}], \quad \forall i>0
\end{equation}
and $\sigma(0)=1$. The terms in this expression are reminiscent of ladder epochs and heights, but differ from those in that they bear on the backward rather than the forward workload sequence.

\begin{figure*}
        \centering
        \begin{subfigure}[b]{0.475\textwidth}
            \centering
            \includegraphics[width=\textwidth]{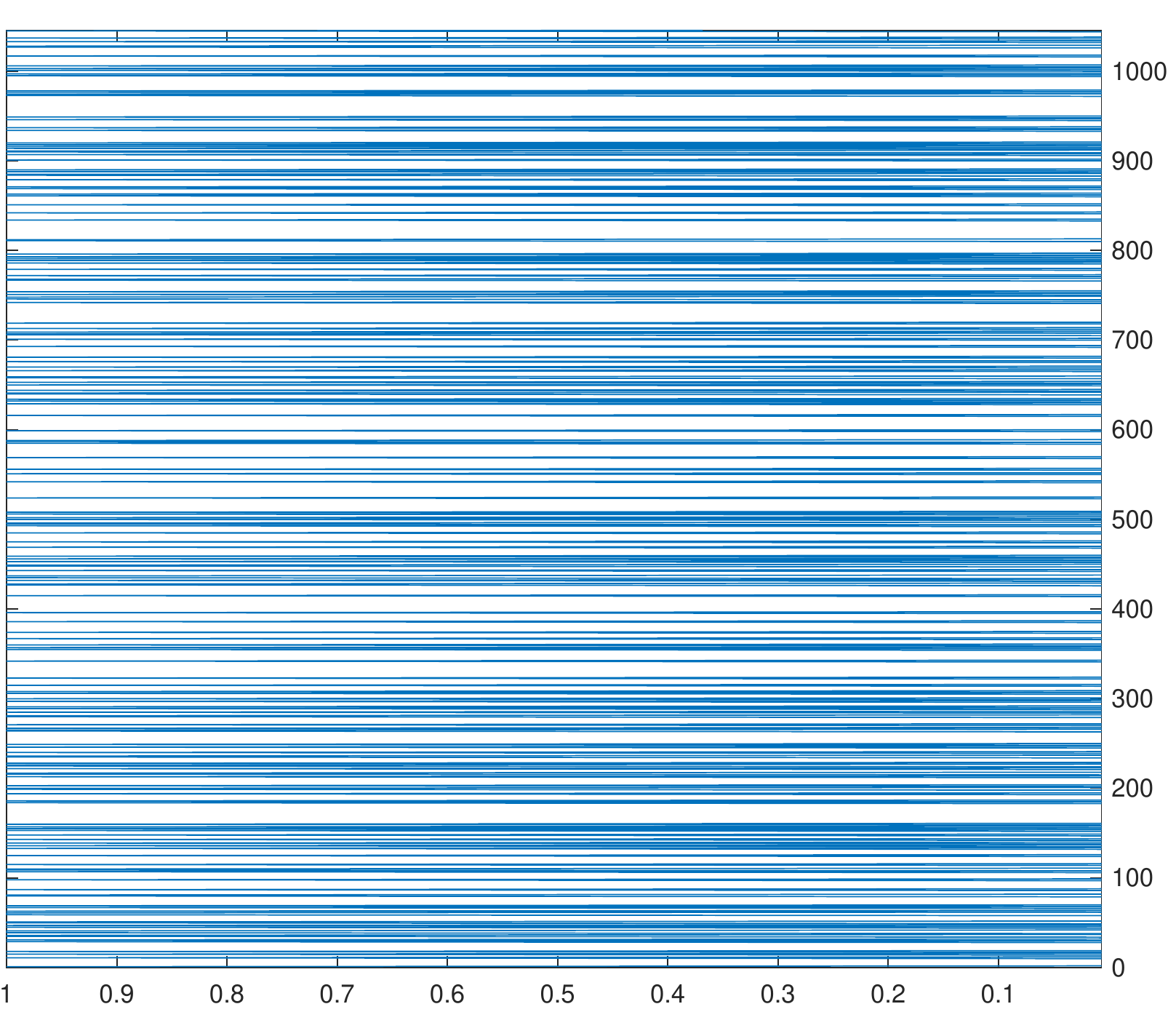}
            \caption[] {{\small $\varsigma, \upsilon \sim Geo(0.2)$, and $0 \leq i \leq 1000$}}    
            \label{GE01}
        \end{subfigure}
        \hfill
        \begin{subfigure}[b]{0.475\textwidth}  
            \centering 
            \includegraphics[width=\textwidth]{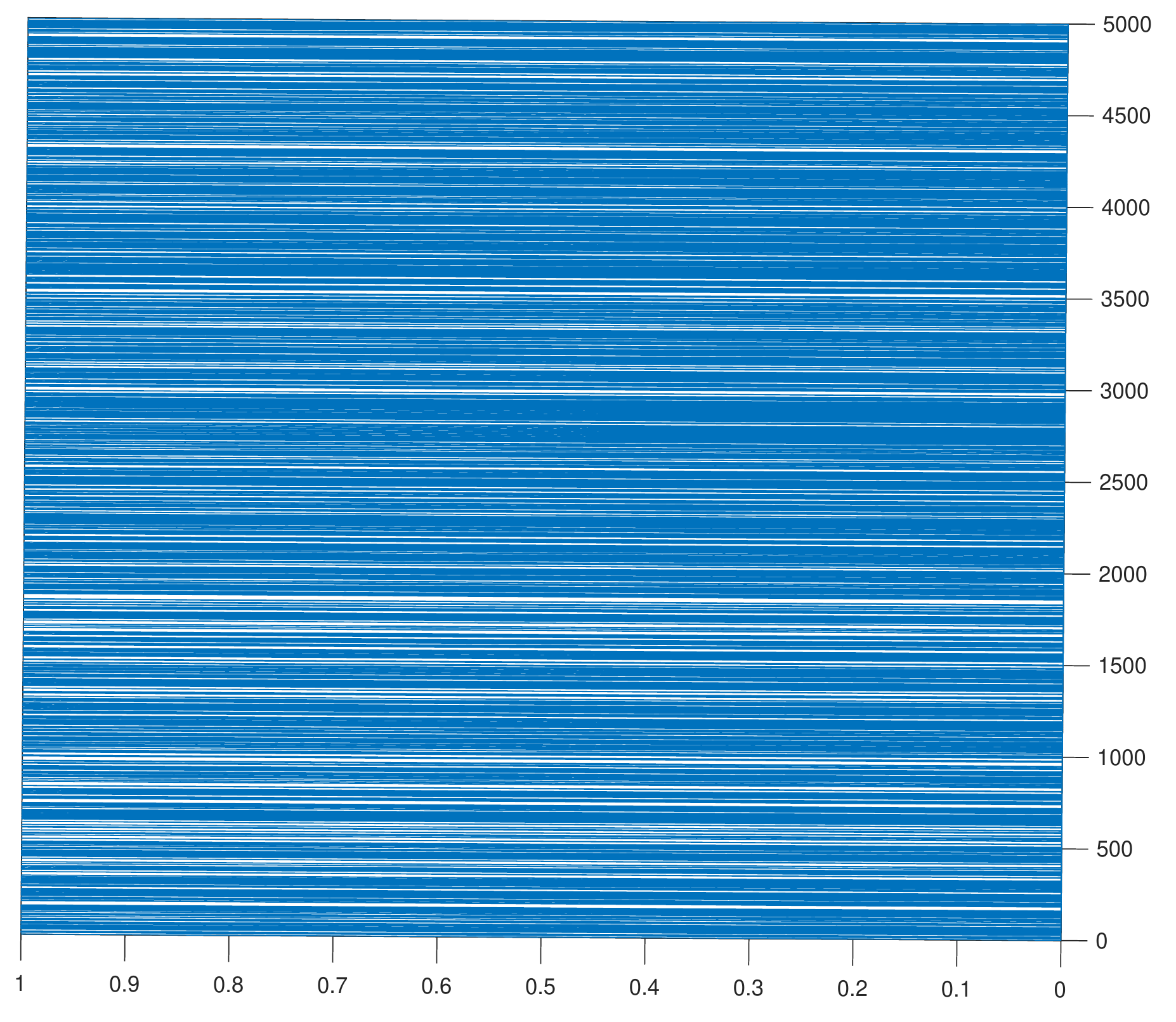}
            \caption[]%
            {{\small $\varsigma, \upsilon \sim Geo(0.2)$, and $0 \leq i \leq 5000$}}    
            \label{fig:mean and std of net24}
        \end{subfigure}
        \vskip\baselineskip
        %\begin{subfigure}[b]{0.475\textwidth}   
            %\centering 
            %\includegraphics[width=\textwidth]{}
            %\caption[]%
            %{{\small $\varsigma, \upsilon \sim Pois(20)$, and $0 \leq i \leq 1000$}}    
            %\label{fig:mean and std of net34}
        %\end{subfigure}
        %\hfill
        %\begin{subfigure}[b]{0.475\textwidth}   
            %\centering 
            %\includegraphics[width=\textwidth]{}
            %\caption[]%
            %{{\small $\varsigma, \upsilon \sim Pois(20)$, and $0 \leq i \leq 5000$}}    
            %\label{fig:mean and std of net44}
        %\end{subfigure}
        %\hfill
        \begin{subfigure}[b]{0.475\textwidth}   
            \centering 
            \includegraphics[width=\textwidth]{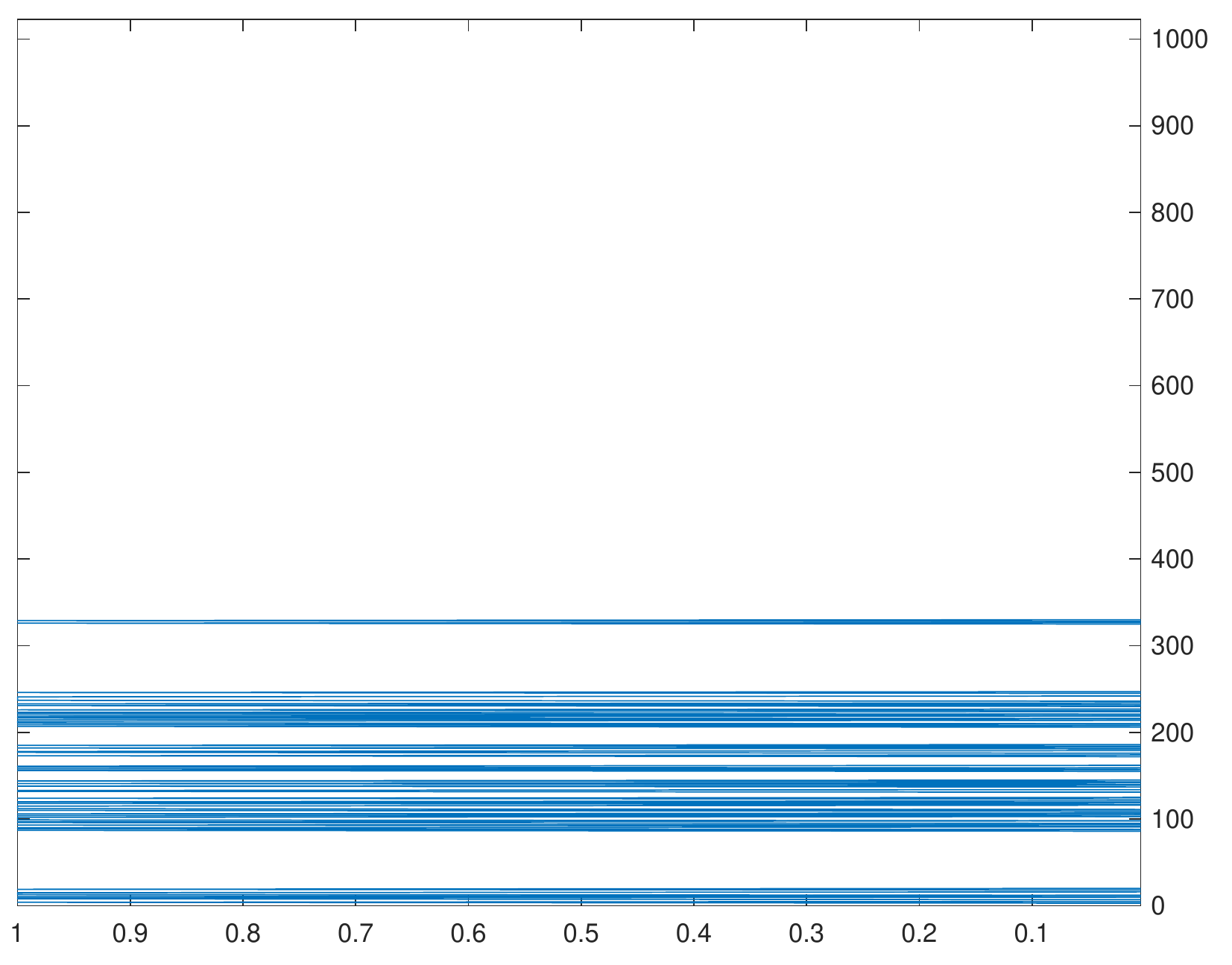}
            \caption[]%
            {{\small $\varsigma, \upsilon \sim Zeta(2.5)$, and $0 \leq i \leq 1000$}}    
            \label{fig:mean and std of net44}
        \end{subfigure}
        \hfill
        \begin{subfigure}[b]{0.475\textwidth}   
            \centering 
            \includegraphics[width=\textwidth]{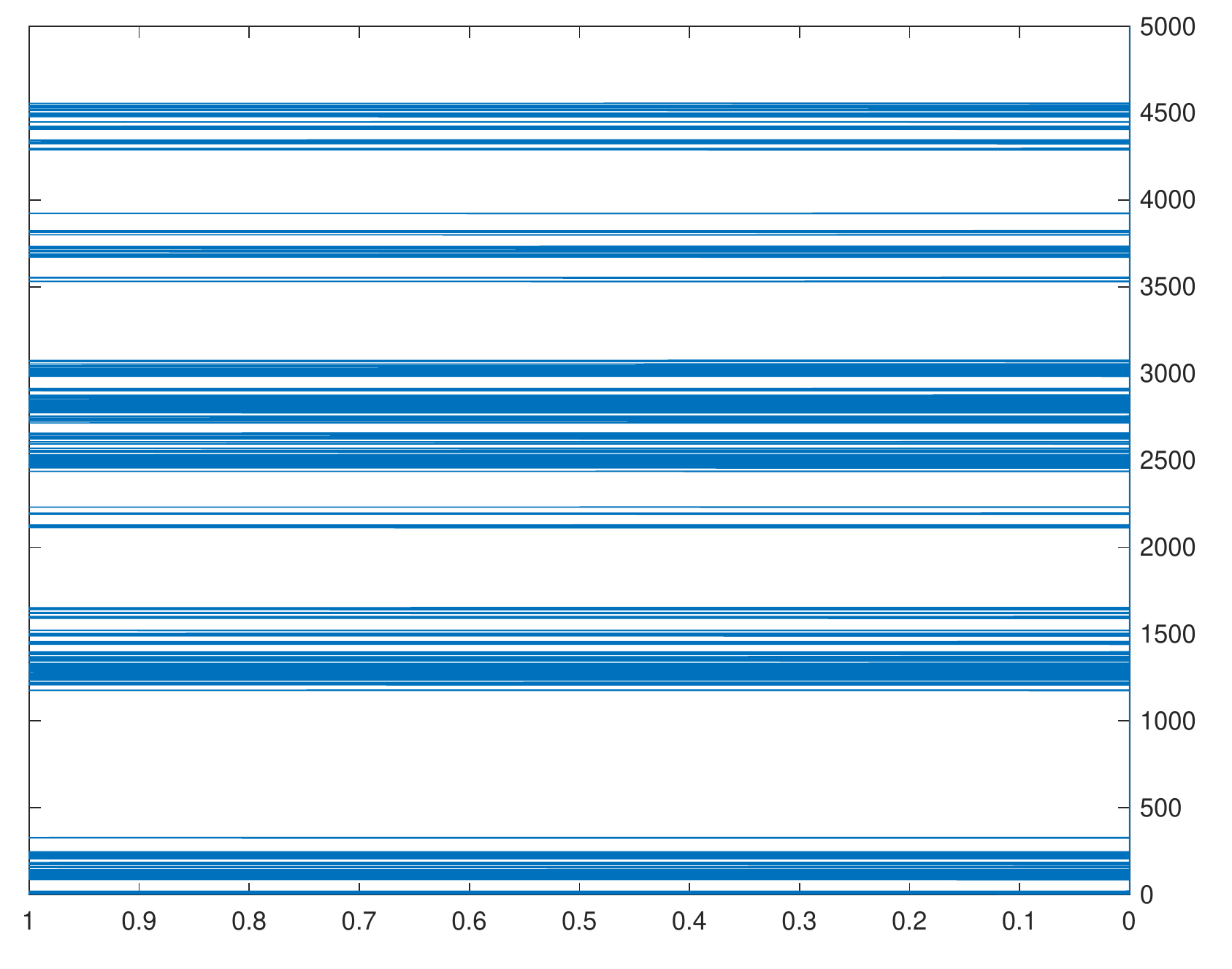}
            \caption[]%
            {{\small $\varsigma, \upsilon \sim Zeta(2.5)$, and $0 \leq i \leq 5000$}}    
            \label{fig:mean and std of net44}
        \end{subfigure}
        \caption[ Perfect samples of $M^{T}$ in the null recurrent case for two different distributions of inter-arrival and service times, for two different intervals of the state space. ]
        {\small Perfect samples of $M^{T}$ in the null recurrent case for two different distributions of inter-arrival and service times, for two different intervals of the state space. } 
        \label{TabooSample}
    \end{figure*} 
    
 One can analyze various properties of the Taboo PP using these perfect samples. As Equation \eqref{NEWTABOO2} shows, 
 the first moment measure of $M^{T}_0$, i.e., its intensity measure, is the invariant measure of the workload Markov Chain. 
 Moment measures of order 2 of $M^{T}_0$ give some information about the interaction between the points. 
 Using an idea similar to that of the Ripley K-function in point process theory, one can detect clustering or 
 inhibition in $M^{T}_0$ by comparing this function to 1, see \cite{Stoyan}. For this, consider the following local second-order moment based function:
 \begin{equation*}
 K_{i}(r)=\frac{\mathbb{E}[ M^{T}_0(i-r,i+r)|M^{T}_0(i)=1]}{\mathbb{E}[ M^{T}_0(i-r,i+r)]}.
 \end{equation*}
This function can be estimated using perfect samples of $M^{T}_0$. If the points were distributed independently, 
for all $i$, we would have $K_{i}(r)=1$; this value is used as a benchmark. 
If $K_{i}(r)>1$, there is clustering at point $i$, whereas if $K_{i}(r)<1$, there is inhibition at point $i$ for radius $r$. 
Figure \ref{TabooSampleK} shows estimates of $K_{i}(r)$ based on a large collection of perfect samples in some examples of critical queues. As the figures show, there is no general conclusion about clustering or inhibition of $M^{T}_0$ in this monotone case. The analyzed examples suggest that when inter-arrival and service time variances are finite, there is inhibition for small $r$, and the value of $K(r)$ tends to $1$ for large $r$ (Figures \ref{GE01} and \ref{GE02}). In contrast, when inter-arrival and service time variances are infinite, there is clustering for small $r$ (Figures \ref{Zeta01} and \ref{Zeta02}). 
\begin{figure*}
        \centering
        \begin{subfigure}[b]{0.475\textwidth}
            \centering
            \includegraphics[width=\textwidth]{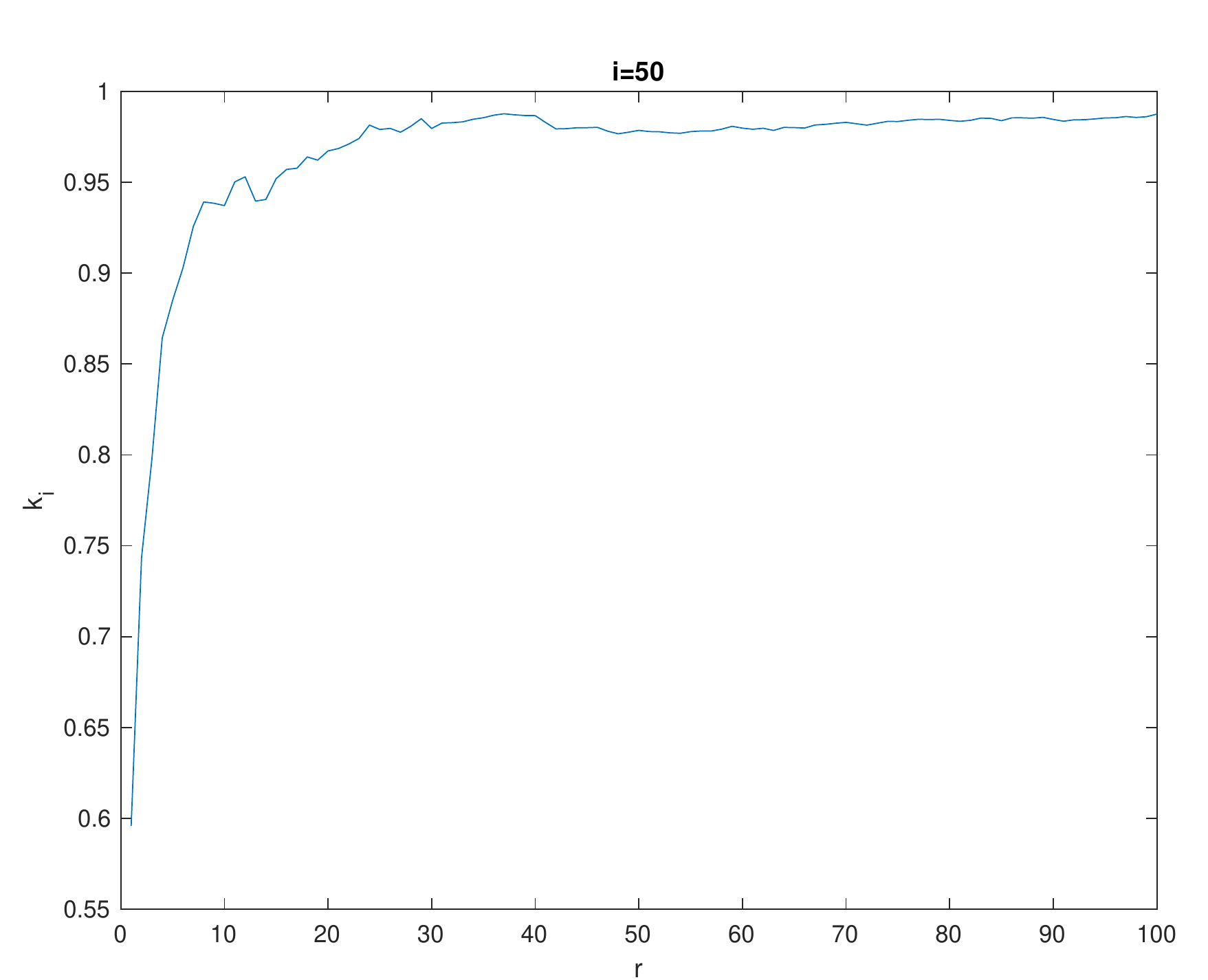}
            \caption[] {{\small $\varsigma, \upsilon \sim Geo(0.2)$}}    
            \label{GE01}
        \end{subfigure}
        \hfill
        \begin{subfigure}[b]{0.475\textwidth}  
            \centering 
            \includegraphics[width=\textwidth]{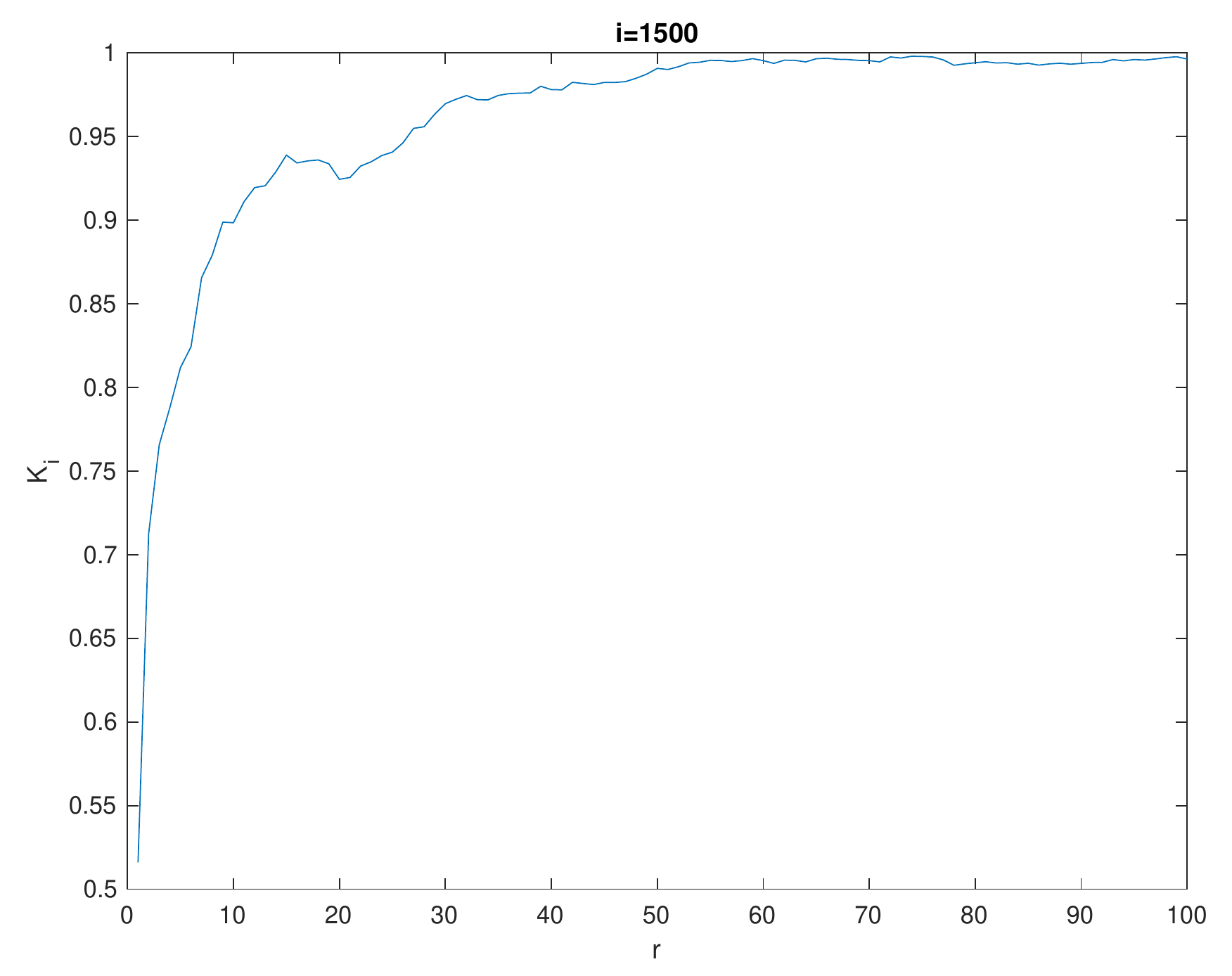}
            \caption[]%
            {{\small $\varsigma, \upsilon \sim Poi(25)$}}    
            \label{GE02}
        \end{subfigure}
        \vskip\baselineskip
        %\begin{subfigure}[b]{0.475\textwidth}   
            %\centering 
            %\includegraphics[width=\textwidth]{}
            %\caption[]%
            %{{\small $\varsigma, \upsilon \sim Pois(20)$}}    
            %\label{fig:mean and std of net34}
        %\end{subfigure}
        %\hfill
        %\begin{subfigure}[b]{0.475\textwidth}   
            %\centering 
            %\includegraphics[width=\textwidth]{}
            %\caption[]%
            %{{\small $\varsigma, \upsilon \sim Pois(20)$}}    
            %\label{fig:mean and std of net44}
        %\end{subfigure}
        %\hfill
        \begin{subfigure}[b]{0.475\textwidth}   
            \centering 
            \includegraphics[width=\textwidth]{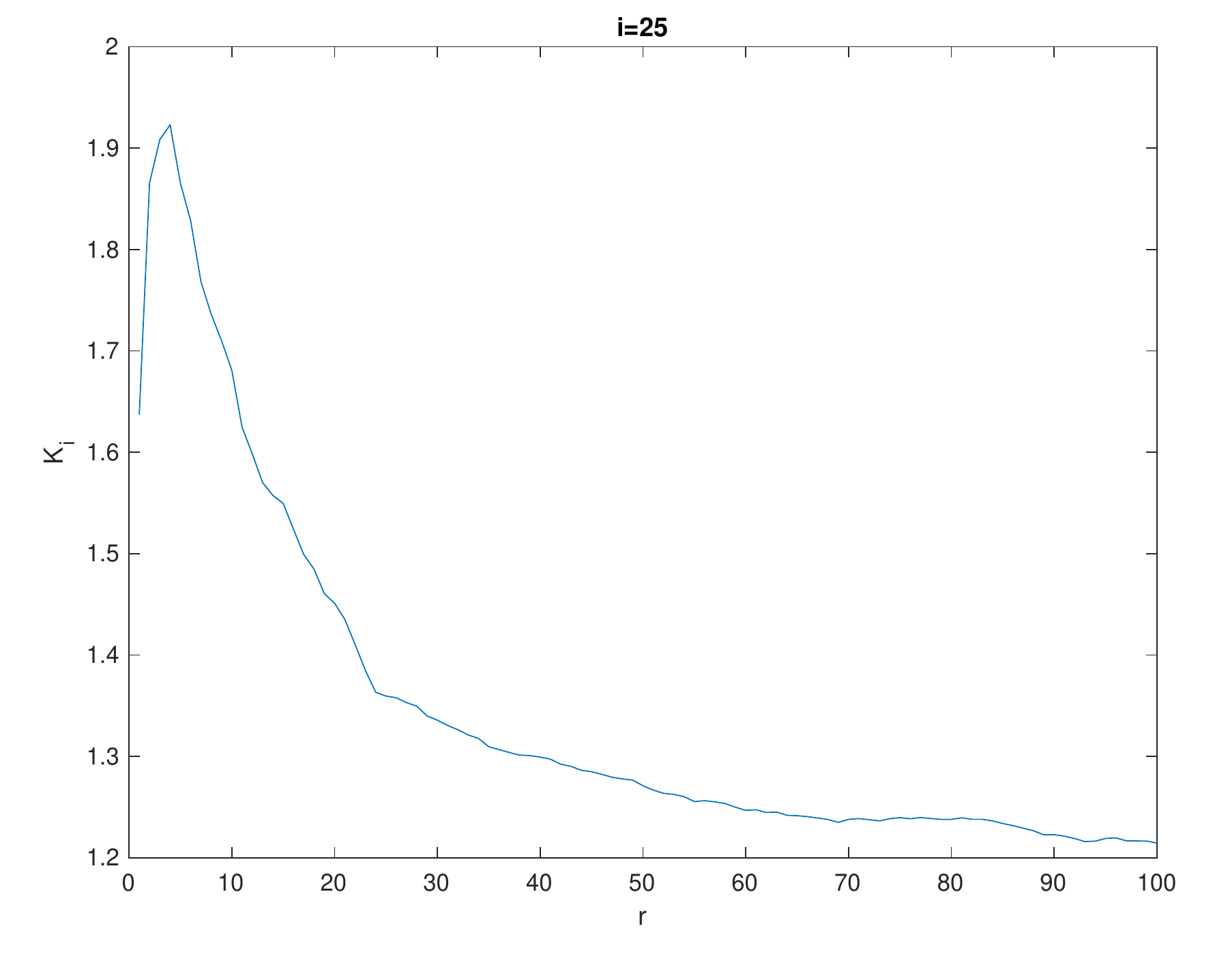}
            \caption[]%
            {{\small $\varsigma, \upsilon \sim Zeta(2.5)$}}    
            \label{Zeta01}
        \end{subfigure}
        \hfill
        \begin{subfigure}[b]{0.475\textwidth}   
            \centering 
            \includegraphics[width=\textwidth]{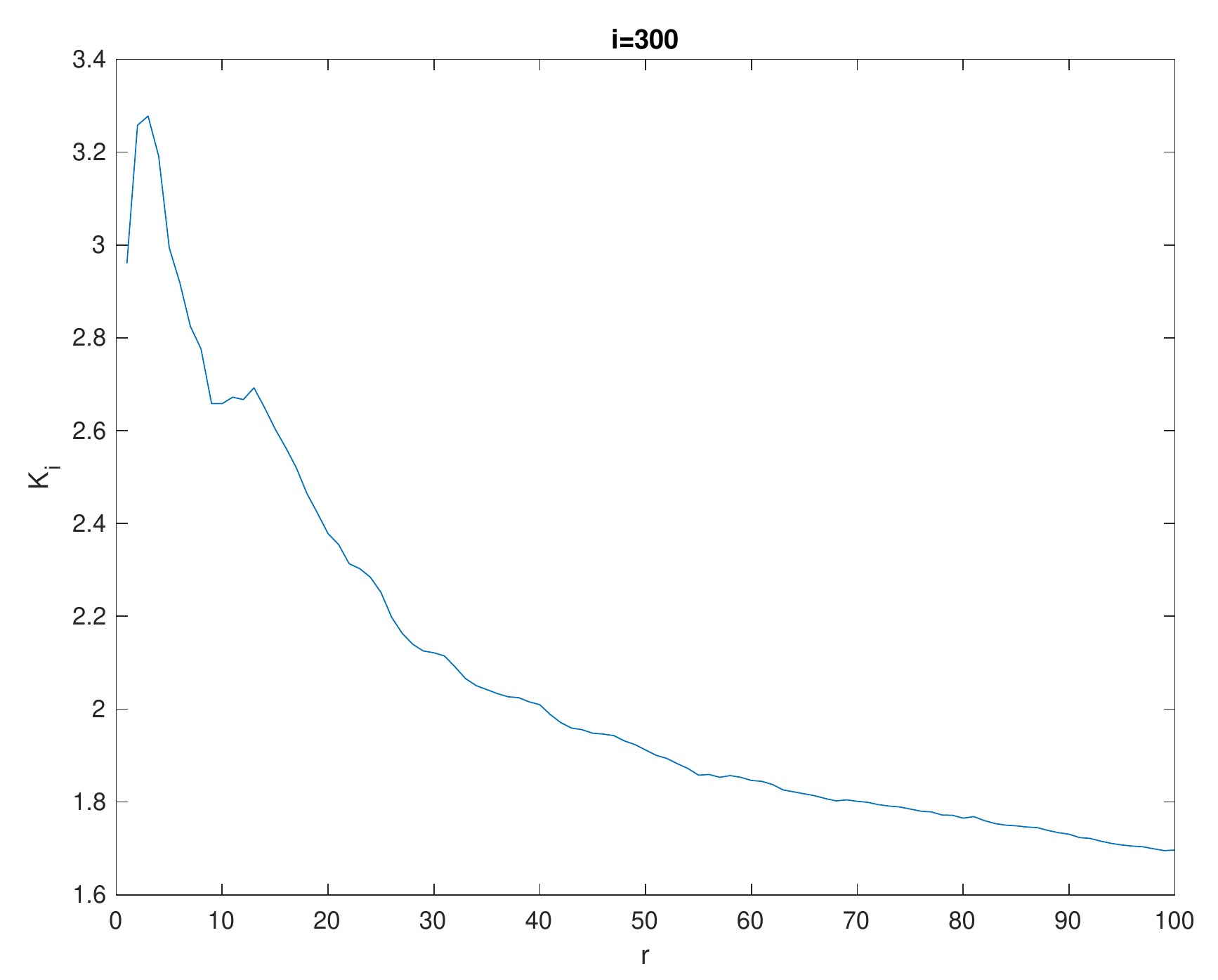}
            \caption[]%
            {{\small $\varsigma, \upsilon \sim Zeta(2.5)$}}
            \label{Zeta02}
        \end{subfigure}
        \caption[ Estimation of $K_{i}(r)$ with 1000 samples of $M^{T}_0$, for two different values of $i$, and $0<r\leq 100$.]
        {\small Estimation of $K_{i}(r)$ with 1000 samples of $M^{T}_0$, for two different values of $i$, and $0<r\leq 100$. } 
        \label{TabooSampleK}
    \end{figure*} 
    
\subsection{Interpretation and Perfect Sampling of the Potential PP}
The potential PP has the same support as the Taboo PP, but
different multiplicities. It is easy to see that if $i$ is an atom of the Taboo PP, the multiplicity
of atom $i$ in the Potential P.P. is the number of epochs
that separate in the backward construction
the inclusion of atom $i$ in the Taboo PP, due to an increase
of the Loynes variable, from its last increase (and atom inclusion).
That is
\begin{equation}
\label{QPO}
M_0^P=\sum_{n\ge 0} \delta_{L_n} 1_{L_n>\max_{0\le k\le n-1} L_k}
\left( \sum_{k\le n} 1_{L_n=L_k} \right).
\end{equation}

It follows from our general results that, in the null recurrent case, this random measure is a.s.
locally finite, though with an infinite first moment measure, whereas it is not locally finite
in the positive recurrent case. In other words, this Potential PP gives the {\em joint time-space structure of the records in Loynes' construction},
with the support of this PP describing the spatial organization of the backward records, as for the Taboo case, and
the multiplicities describing their time separation. 

Figure \ref{POTENTIAL} gives a perfect sample of an instance of Potential PP at different scales. This point process inherits the complex ``correlation'' structure of the Taboo PP through their common support. The fact that it 
has an infinite intensity measure means that, in addition, all its multiplicities are heavy tailed. These last two properties together with the CFTP space-time interpretation discussed above contribute
making this Potential point process a fascinating object.

The interpretation of the Potential PP survives in the positive recurrent case, with the caveat that it is not locally finite. 
In fact, in this particular case, all atoms except the largest one
have a finite multiplicity, with the same time separation interpretation as above. However, the largest
atom, namely $L_\infty$ has an infinite multiplicity as it belongs to the bi-infinite path.

It is easy to check that the expressions \eqref{QTA} and \eqref{QPO} hold beyond the queuing context, provided the Markov Chain 
satisfies the stochastic monotonicity assumption.

\begin{figure*}
        \centering
        \begin{subfigure}[b]{0.475\textwidth}
            \centering
            \includegraphics[width=\textwidth]{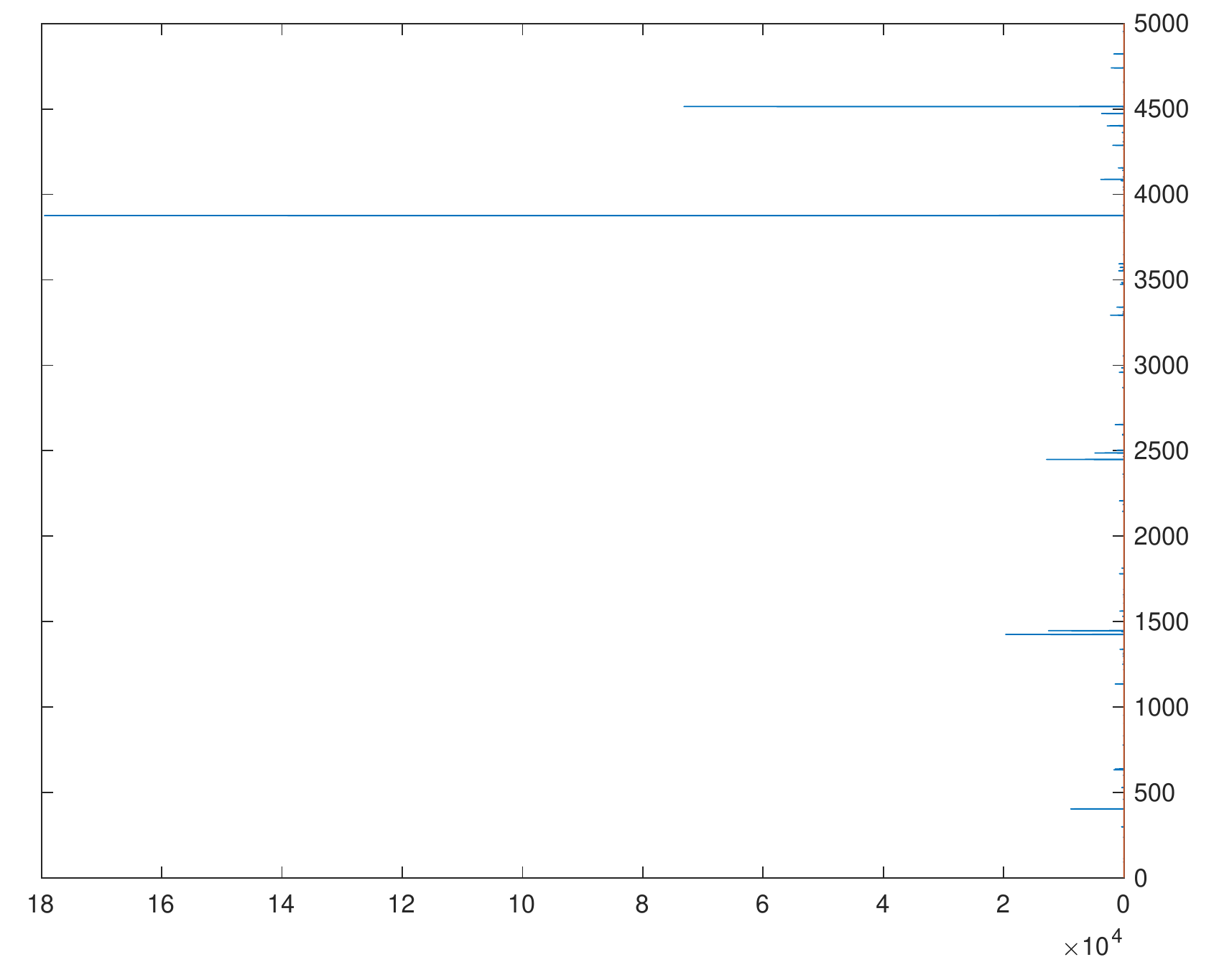}
            \caption[] %
           % {{\small }}    
           
        \end{subfigure}
        \hfill
        \begin{subfigure}[b]{0.475\textwidth}  
            \centering 
            \includegraphics[width=\textwidth]{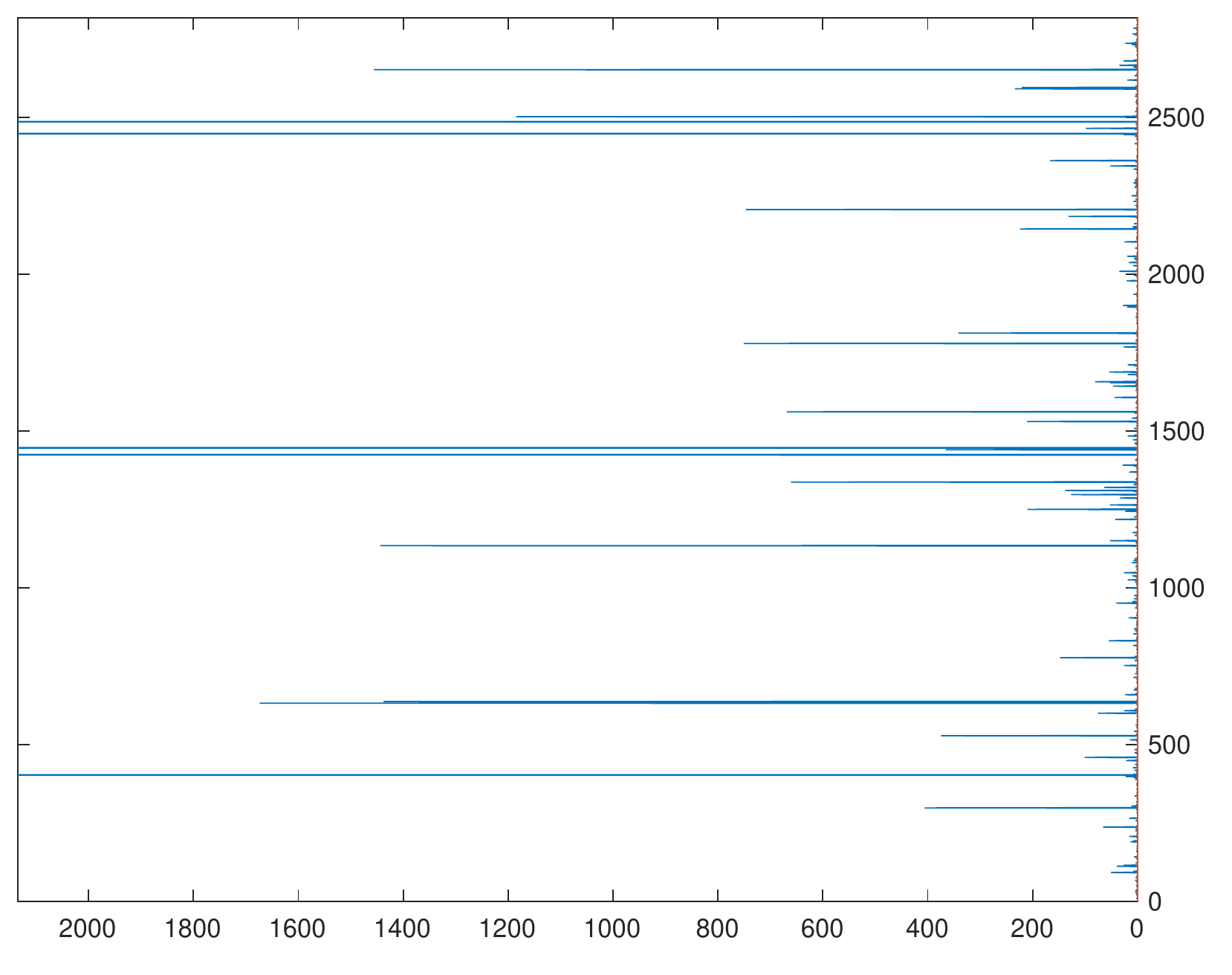}
            \caption[] %
          % {{\small }}    
           
        \end{subfigure}
        \hfill
        \begin{subfigure}[b]{0.475\textwidth}  
            \centering 
            \includegraphics[width=\textwidth]{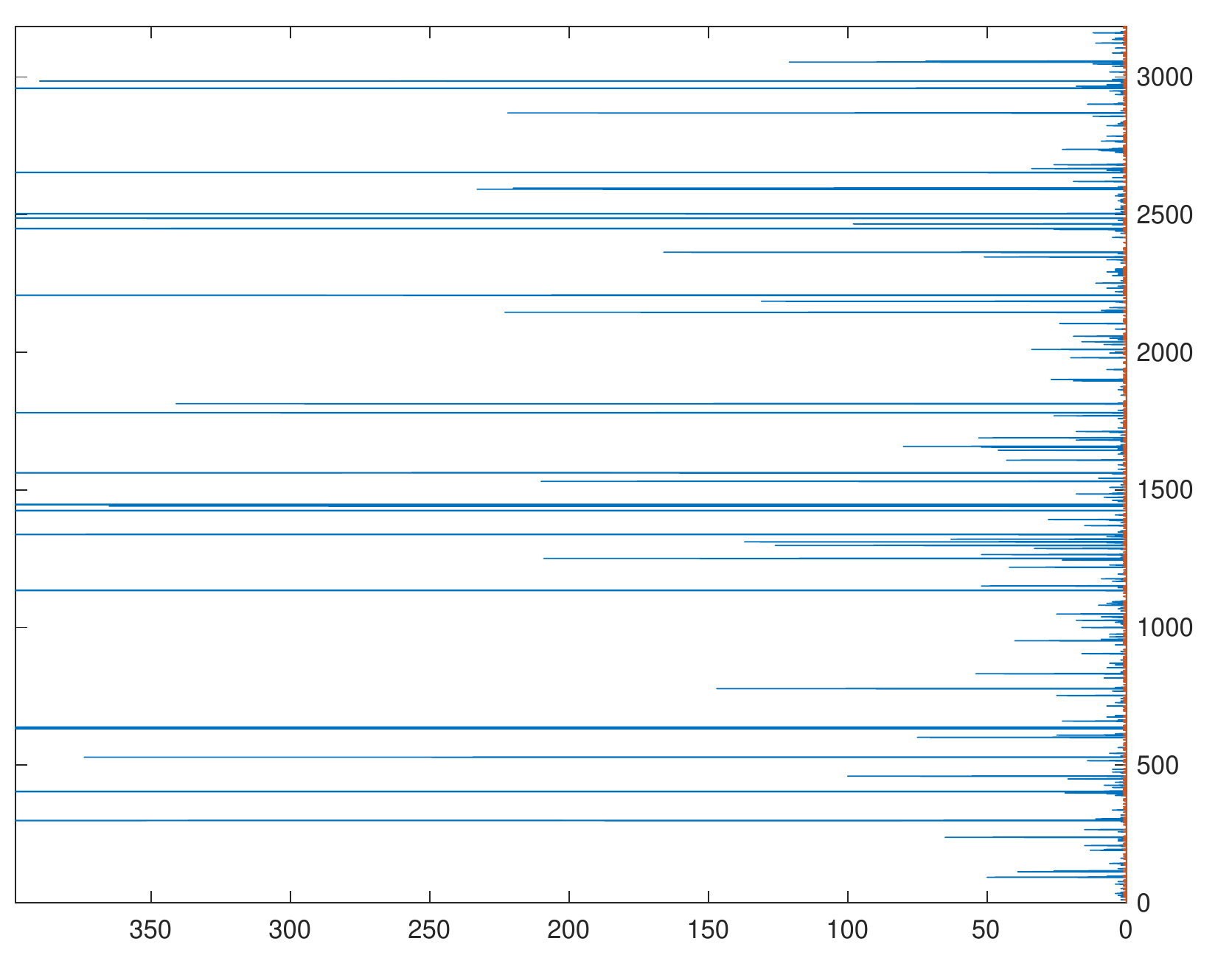}
            \caption[]%
          %  {{\small $\varsigma, \upsilon \sim Geo(0.2)$, and $0 \leq i \leq 5000$}}    
          
        \end{subfigure}
        \hfill
        \begin{subfigure}[b]{0.475\textwidth}  
            \centering 
            \includegraphics[width=\textwidth]{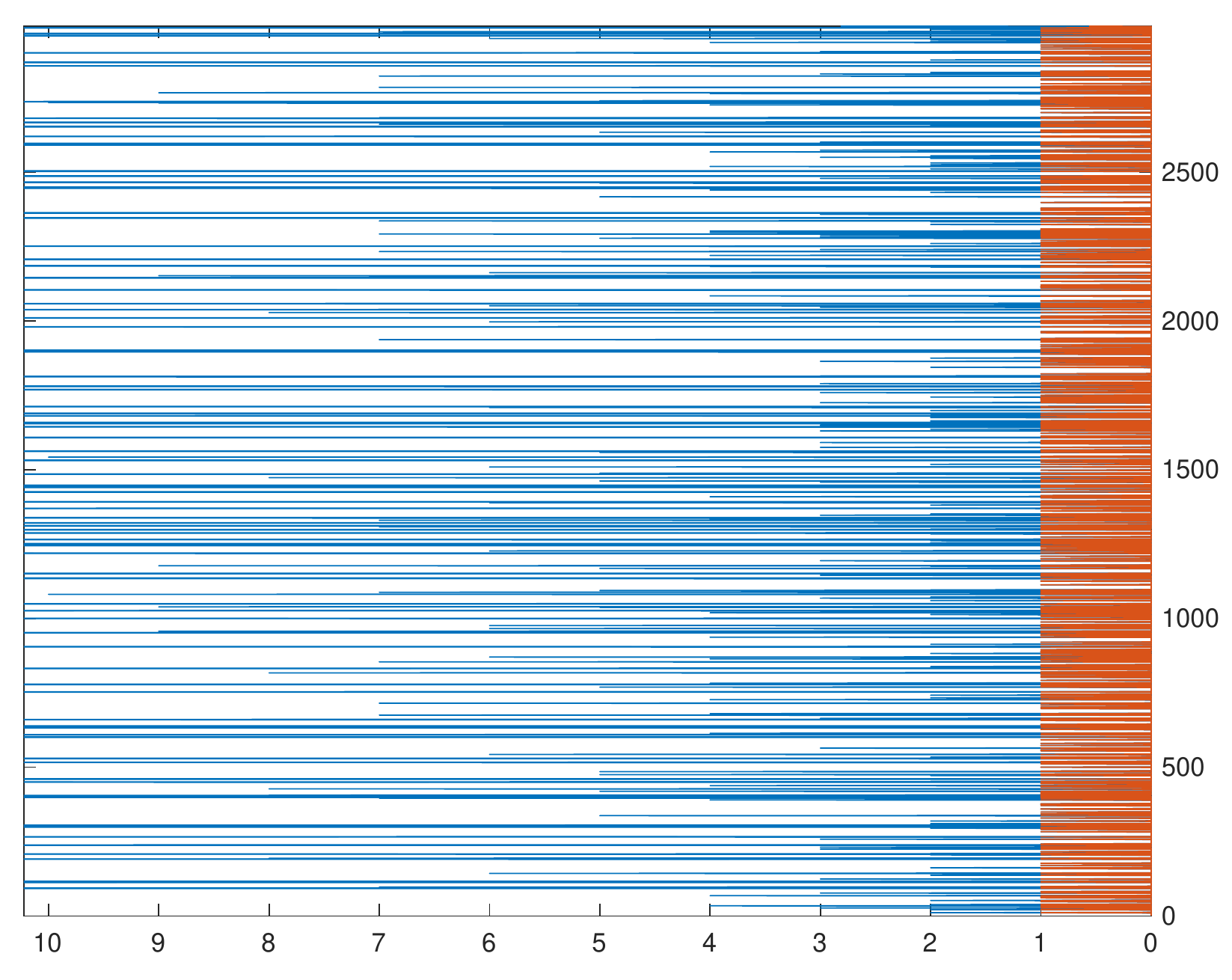}
            \caption[]%
           % {{\small $\varsigma, \upsilon \sim Geo(0.2)$, and $0 \leq i \leq 5000$}}    
           
            \label{}
            
        \end{subfigure}
        \vskip\baselineskip
        
        \caption[ Perfect samples of $M^{P}_0$ for Poisson-distributed inter-arrival and service time, for different scales]
        {\small Perfect samples of $M^{P}_0$ for Poisson-distributed inter-arrival and service time, for different scales, the red segments represent the Taboo PP.} 
        \label{POTENTIAL}
    \end{figure*} 

\section{ Taboo PP on the Positive Recurrent Bridge Graph}
\label{Possitive}
\subsection{Positive Recurrent and Null Recurrent Bridge Graph}
The positive recurrent Bridge Graph is studied in \cite{Baccelli2017}. It is known that the positive recurrent Bridge Graph is a.s. connected when the driving sequence is totally independent, it is unimodularizable, and in the sense of the foil classification theorem for unimodular networks, it is $I/F$. Moreover, the $I/F$ property of the positive recurrent Bridge Graph gives that this graph contains a unique bi-recurrent path $\{\beta_{t}\}_{t\in\mathbb{Z}}$. A positive recurrent MC, $X$, has a unique stationary distribution, and the importance of the bi-infinite path $\{\beta_{t}\}_{t\in\mathbb{Z}}$, in its associated Bridge Graph, $B_{X}$, is that the intersection of this path with the zero timeline gives a perfect sample of the stationary distribution of $X$.
On the other hand, in the null recurrent Bridge Graph, it is shown in the current work that this graph is not connected in general; it can be a tree or forest. Moreover, it is not unimodularizable in general. However, it ``contains'' a unimodular random network  (the Recurrence Time EFT/EFF) which allows one to prove some properties of the null recurrent Bridge Graph. In contrast with the positive recurrent case, in the null recurrent Bridge Graph, there is no bi-infinite path when it is connected (or under the assumption of Proposition \ref{Prnew}). However, there is an analog of the perfect sample of the positive recurrent case, namely the Taboo PP.
\begin{remark}
Of course, there are other ways for constructing a point process that its intensity is equal to the invariant measure at each point of the state space. For example, consider a path of the MC, $X$, starting from an arbitrary state $s^{*} \in \mathcal{S}$. For each $s\in \mathcal{S}$, consider the number of times this path meets $s$ before going back to $s^{*}$. The expectation of this number for each $s\in\mathcal{S}$ is equal to $\zeta_{s}$, defined in \eqref{5-7}, which is equal to the stationary measure of $s$. So in each realization of the Markov Chain starting from $s^{*}$, this number can be considered. The advantage of Taboo PP as a technique of sampling is its local constructibility (at least in the monotone case), as mentioned in Subsection \ref{SS4}. 
\end{remark}
One can consider the Taboo PP and its properties in the positive recurrent Bridge Graph. A question that arises here is that of the relationship between the Taboo PP of a positive recurrent MC and the classical perfect sample of its stationary distribution. 
\begin{proposition}
\label{Pos1}
Consider a positive recurrent MC, $X$, and its associated Bridge Graph, $B_{X}$. 
Then the $S$-set of  $B_{X}$ is a.s. finite. Moreover, the Taboo multiplicity of every vertex in the $S$-set is a.s. finite. 
\end{proposition}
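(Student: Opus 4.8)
The plan is to reduce both statements to a single fact, namely that the number of those times $t\le -1$ whose outgoing edge in the Recurrence Time EFF $G^{\mathcal{T}}$ of $B_X$ ``flies over'' time $0$ is a.s.\ finite. Recall that the length $\mathcal{T}_t$ of the outgoing edge of $t$ is the first return time to the time axis of the path $Q_t$ issued from $(t,s^*)$, so $Q_t$ is a copy of the Markov Chain started at $s^*$ and $\mathcal{T}_t$ has the law of the return time of $X$ to $s^*$; its mean is finite precisely because $X$ is positive recurrent. (In the same vein, in the positive recurrent case the Recurrence Time EFF is $\mathcal{I/F}$ rather than $\mathcal{I/I}$.)

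First I would treat the $S$-set. For $y\neq s^*$, the vertex $(0,y)$ lies in the $S$-set iff the set $A_y:=\{t\le -1:\,Q_t(0)=y\}$ is non-empty; as $A_y\subseteq\{-1,-2,\dots\}$ it then has a greatest element $t^{\star}(y)$. The key observation is that $Q_{t^{\star}(y)}$ visits $s^*$ at no time strictly between $t^{\star}(y)$ and $0$: if it were at $s^*$ at some $s$ with $t^{\star}(y)<s<0$, then $Q_s$ and $Q_{t^{\star}(y)}$ would agree from time $s$ on, giving $Q_s(0)=y$, i.e.\ $s\in A_y$ with $s>t^{\star}(y)$---a contradiction. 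Since also $Q_{t^{\star}(y)}(0)=y\neq s^*$, the first return of $Q_{t^{\star}(y)}$ to $s^*$ is strictly after time $0$, i.e.\ $t^{\star}(y)+\mathcal{T}_{t^{\star}(y)}>0$: the outgoing edge of $t^{\star}(y)$ flies over $0$. The map $y\mapsto t^{\star}(y)$ is injective (if $t^{\star}(y_1)=t^{\star}(y_2)=t$ then $y_1=Q_t(0)=y_2$), so $|S\text{-set}|\le 1+F$, where $F:=\#\{t\le -1:\,t+\mathcal{T}_t>0\}$ and the extra $1$ accounts for $s^*$ itself.

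Next I would bound $F$: since the $\mathcal{T}_t$ are identically distributed and a return time is $\ge 1$, $\mathbb{E}[F]=\sum_{k\ge 1}\mathbb{P}(\mathcal{T}_0>k)=\mathbb{E}[\mathcal{T}_0]-1<\infty$, hence $F<\infty$ a.s.; this gives the first claim. For the Taboo multiplicities I would use that, from Definition~\ref{D06}, $\tau_0(s^*)=1$ while, for $y\neq s^*$, $\tau_0(y)$ is the number of $t\le -1$ with $Q_t(0)=y$ for which $Q_t$ does not revisit $s^*$ before time $0$, i.e.\ $t+\mathcal{T}_t>0$; therefore $\tau_0(y)\le F$. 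Hence every Taboo multiplicity is a.s.\ finite; in fact, by the same count, the total Taboo mass $\sum_y\tau_0(y)=1+F$ is a.s.\ finite.

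I do not expect a serious obstacle: the argument is short and runs parallel to the $\mathcal{I/I}$-based proofs of Propositions~\ref{Pr7}--\ref{Pr8} in the null recurrent case, the only structural input being $\mathbb{E}[\mathcal{T}_t]<\infty$. The one point to be careful about is the bookkeeping at the reference vertex $s^*$: a unit of mass reaching $s^*$ is erased by the Taboo dynamics (cf.\ Theorem~\ref{Propo1}), so $(0,s^*)$ carries multiplicity exactly $1$, and a $*$-descendant whose path returns to $s^*$ before time $0$ is not counted---which is exactly why ``flying over $0$'' captures all the contributions and the bound by $F$ is legitimate. One could alternatively carry out the injection entirely inside $G^{\mathcal{T}}$ and appeal to its $\mathcal{I/F}$ classification, but the direct first-moment computation above is shorter and does not require $B_X$ to be connected.
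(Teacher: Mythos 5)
Your proof is correct, but it takes a genuinely different route from the paper's. The paper's proof of Proposition \ref{Pos1} is structural: it invokes the fact, quoted from \cite{Baccelli2018}, that the positive recurrent Bridge Graph is a unimodularizable network of class $\mathcal{I/F}$ whose foils are its intersections with vertical timelines; finiteness of the $S$-set is then finiteness of the $0$-foil, and finiteness of the Taboo multiplicities follows because every vertex off the unique bi-infinite path has finitely many descendants, while the bi-infinite path contributes exactly one unit of taboo mass. You instead work in the Recurrence Time EFF and reduce both claims to the a.s.\ finiteness of $F=\#\{t\le -1:\ t+\mathcal{T}_t>0\}$, obtained from the first-moment computation $\mathbb{E}[F]=\sum_{k\ge 1}\mathbb{P}(\mathcal{T}_0>k)=\mathbb{E}[\mathcal{T}_0]-1<\infty$; the injection $y\mapsto t^{\star}(y)$ of the $S$-set (minus $s^*$) into the edges flying over $0$ is the only combinatorial input, and it is sound because coalescence of paths at $(s,s^*)$ forces $Q_s(0)=Q_{t^{\star}(y)}(0)$. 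What your approach buys: it is elementary and self-contained (no appeal to unimodularizability, the foil classification, or the existence and uniqueness of the bi-infinite path), it does not require $B_X$ to be connected, and it yields quantitative bounds --- the mean cardinality of the $S$-set and the mean total taboo mass are both at most $\mathbb{E}[\mathcal{T}_0]$, the mean return time to $s^*$ --- which the paper's qualitative argument does not give. What the paper's approach buys: it keeps the statement inside the unimodular framework used throughout (mirroring Propositions \ref{Pr7}, \ref{Pr8} and \ref{Pr14}) and makes explicit the role of the bi-infinite path, the central object for CFTP in the positive recurrent case. Your bookkeeping at $s^*$ (multiplicity fixed to $1$, paths returning to $s^*$ before time $0$ discarded) matches the reading of Definition \ref{D06} used in the proof of Theorem \ref{Propo1}, so the bound $\tau_0(y)\le F$ is legitimate.
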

\begin{proof}
In the positive recurrent case, the Bridge Graph is an $I/F$ unimodular network \cite{Baccelli2018}. Also, the foils in the Bridge graph are its intersection with the vertical timelines. So the S-set, which is the $0$- foil, is a.s. finite. \\
For proving the second part, note that since $B_{X}$ is $I/F$, every vertex $y \in B_{X}$, which is not on the bi-infinite path, has a.s. finitely many descendants, specially finitely many *-descendants. To complete the proof, note that, although there are infinitely many $*$-descendants on the bi-infinite path, only one of them does not return to $s^{*} $ before time zero. Hence the bi-infinite path adds exactly mass one to the $S$-set. 
\end{proof}
\subsection{Relation between the Taboo PP and Classical Perfect Sampling}
The positive recurrent MC, X, has a unique stationary distribution, $\sigma$. One can sample from this stationary distribution with the Coupling from the Past algorithm (See \cite{Prop}). The Taboo PP also gives a samples from the stationary distribution, in the sense that the mean measure of the Taboo PP is the stationary distribution at each point. 
The relation between these two samplings is discussed in the next proposition: \\
\begin{proposition}
Let $\tau$ be the Taboo PP of the positive recurrent MC, $X$. Suppose that $T$ is a sample of $\tau$. If one biases $T$ with the number of the points (considering the multiplicities) that belong to it, and chooses a random point from $T$ and denote it by $Y$, then $Y$ has the stationary distribution of $X$. 
\end{proposition}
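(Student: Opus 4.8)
The plan is to obtain the law of $Y$ by a direct size-biasing computation and then to identify the resulting distribution using Theorem~\ref{T2}. Write $\tau_0(\cdot)$ for the multiplicities of the Taboo PP at time $0$ (the choice of time is immaterial by Theorem~\ref{T2}), let $T$ be a sample of $\tau$, and set $N:=\sum_{j\in\mathcal S}T(j)$ for the number of its atoms counted with multiplicity. First I would record the two prerequisites that make the procedure well defined in the positive recurrent case: by Proposition~\ref{Pos1} the $S$-set is a.s.\ finite and every Taboo multiplicity is a.s.\ finite, so $N<\infty$ a.s., while $\tau_0(s^{\ast})=1$ always gives $N\ge 1$ a.s.; and by Theorem~\ref{T2} together with Tonelli, $\mathbb E[N]=\sum_{j}\mathbb E[\tau_0(j)]=\sum_j\sigma(j)=:Z$, which is finite and positive precisely because $X$ is positive recurrent ($\sigma$ being then a summable invariant measure, cf.\ Lemma~\ref{L02}).

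Next I would unfold the definition of $Y$. Size-biasing $T$ by its total mass $N$ produces a random measure $\widetilde T$ whose law has Radon--Nikodym derivative $N/\mathbb E[N]$ with respect to the law of $T$, and given $\widetilde T$ the point $Y$ is drawn so that $\mathbb P(Y=i\mid\widetilde T)=\widetilde T(i)/N(\widetilde T)$. Conditioning on $\widetilde T$ and then rewriting the size-biased expectation as an expectation under the law of $T$, one gets, for every $i\in\mathcal S$,
\begin{equation*}
\mathbb P(Y=i)=\mathbb E\!\left[\frac{N}{\mathbb E[N]}\,\frac{\tau_0(i)}{N}\right]=\frac{\mathbb E[\tau_0(i)]}{\mathbb E[N]}=\frac{\sigma(i)}{Z}.
\end{equation*}
The cancellation of $N$ is legitimate since $N\ge 1$ a.s., and all the manipulations are harmless because every quantity involved is nonnegative.

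It then remains to observe that $\sigma(\cdot)/Z$ is the stationary distribution of $X$: in the positive recurrent case $\sigma$ is the invariant measure normalized by $\sigma(s^{\ast})=1$, its total mass $Z=\sum_j\sigma(j)$ is finite, and hence $\pi:=\sigma/Z$ is the unique invariant probability measure; thus $Y\sim\pi$. The only point that genuinely requires care is the bookkeeping around $\mathbb E[N]$ --- namely that $N$ is not merely a.s.\ finite but integrable, which is exactly where positive recurrence enters, and that the mean-measure identity of Theorem~\ref{T2} may legitimately be summed over $\mathcal S$ --- both of which follow routinely from Proposition~\ref{Pos1} and Tonelli, so I do not anticipate a substantial obstacle beyond this verification.
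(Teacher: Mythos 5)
Your proposal is correct and follows essentially the same route as the paper's own proof: size-bias the law of $T$ by its total mass, condition, cancel the total mass against the uniform selection probability, and identify $\mathbb{E}[\tau_0(i)]/\mathbb{E}[\tau_0(\mathcal S)]$ with the normalized invariant measure via Theorem~\ref{T2}. The only difference is cosmetic (the paper writes the computation as a sum over realizations $T\in\mathcal T$ rather than as an expectation with a Radon--Nikodym density), and your explicit verification that $N$ is integrable via Proposition~\ref{Pos1} and positive recurrence is a welcome piece of bookkeeping the paper leaves implicit.
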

\begin{proof}
Let $\mathcal{\tau}$ be the set of all possible outcomes of the Taboo PP. For each $T \in \mathcal{T}$, let $P_{T}$ be the probability that event $T$ occurs, i.e., $\mathbb{P}(\tau=T)$. Then for each $y \in \mathcal{S}$,
\begin{align*}
\label{5-15}
\mathbb{P}(Y=y)&=\sum_{T \in \mathcal{T}} P_{T} \times \frac{m(T)}{\sum_{T' \in  \mathcal{T}} m(T')P_{T'}} \times \frac{T(\{y\})}{m(T)} \nonumber \\
&= \frac{1}{{\sum_{T' \in  \mathcal{T}} m(T')P_{T'}}} \sum_{T \in \mathcal{T}} P_{T} \times T(\{y\}) 
=\frac{\mathbb{E}(\tau(y))}{\mathbb{E}(\tau(\mathcal{S}))},
\end{align*}
where $m(T)$ is the sum of the multiplicity of the vertices in $T$, and $T(\{y\})$ is the multiplicity of $y$ in $T$. 
Since this probability is proportional to $\mathbb{E}(\tau(y))$, and the stationary measure in $y$, $\sigma (y)$, is also proportional to $\mathbb{E}(\tau(y))$,  
$\mathbb{P}(Y=y)= \sigma (y)$.
\end{proof}
Given a realization of the Taboo PP, a natural question is whether is it possible to get a perfect sample of $\sigma$ from this realization in the classical sense ? \\
Here is an algorithm for this, under the extra assumption that $M$ exists such that
$M>m(T) \quad \forall T \in \mathcal{T}$.\\

\noindent
{\bf {Algorithm}}: 
\begin{enumerate}
\item Generate a sample $T$ from $\tau$. 
\item Choose a point $Y$ randomly from $T$ with probability proportional to its multiplicity in $T$.
\item Accept the point $Y$ with probability $\frac{m(T)}{M}$.
\item If the point $Y$ is rejected, back to step $1$.
\end{enumerate}
By using this algorithm one can write the following equations which shows that the algorithm gives a sample of stationary distribution of the MC:

\begin{align*}
\mathbb{P}(Y=y)&= \sum_{T \in \mathcal{T}} P(T) \times \frac{T(\{y\})}{m(T)} \times \frac{m(T)}{M} \times \sum_{n=1}^{\infty} \left( \sum_{T \in \mathcal{T}} P(T) \sum _{y \in T} \frac{T(\{y\})}{m(T)}(1-\frac{m(T)}{M}) \right) ^{n-1}\\
&= \frac{\mathbb{E}(\tau(y))}{M} \times \sum_{n=1}^{\infty} \left( \sum_{T \in \mathcal{T}} P(T) (1-\frac{m(T)}{M}) \right) ^{n-1}\\
&=\frac{\mathbb{E}(\tau(y))}{M}\times \sum_{n=1}^{\infty} \left( 1-\sum_{T \in \mathcal{T}} P(T) \frac{m(T)}{M} \right) ^{n-1}\\
&= \frac{\mathbb{E}(\tau(y))}{M}  \times  \sum_{n=1}^{\infty} \left( 1- \frac{\mathbb{E}(\tau(\mathcal{S}))}{M} \right) ^{n-1}\\
&= \frac{\mathbb{E}(\tau(y))}{M}\times  \frac{M}{\mathbb{E}(\tau(\mathcal{S}))}
= \sigma(y),
\end{align*}
where $\sigma$ is the stationary distribution of the MC, $X$. 

\subsection{ Dynamics on the Space of Random Measures}
\label{Whole}

So far, two dynamics $H^{T}$ and $H^{P}$ have been considered on the Bridge Graph. Here, these dynamics are studied on the general state space, i.e., the space of all integer-valued random measures on $\mathcal{S}$.\\
In the Bridge Graph (or Doeblin Graph), at each time $t$, the Taboo PP and Potential PP are random measures on $\mathcal{S}$. Based on the definitions of $H^{T}$ and $H^{P}$, each of these measures at time $t$ depends only on the measure at time $t-1$ and  $\xi_{t-1}$. So one can consider these dynamics as MCs on the space of $\mathcal{N(S)}$, of all locally finite integer-valued measures on $\mathcal{S}$, and study the properties of these MCs to understand the properties of the dynamics in this more general space. \\
\begin{definition}
\label{9-2}
Consider the Taboo/Potential dynamics constructed by a positive (resp. null) recurrent Markov Chain. The MC, $\Phi ^{T}$/$\Phi ^{P}$, corresponding to this dynamics is called positive (resp. null) recurrent Taboo/Potential Markov Chain on $\mathcal{N(S)}$. 
\end{definition}
The first property that one can consider is the existence of stationary measures of the MCs. It is easy to see that the Taboo PP is a stationary distribution of the positive/null recurrent Taboo MC. Moreover all finite measures are in the domain of attraction of this stationary distribution. Also the Potential PP of a null recurrent MC (in the case where it is a.s. finite at each point), is a stationary distribution of the null recurrent Potential MC. Note that the Potential PP is not a stationary distribution of the positive recurrent Potential MC. Since the Potential PP in this case has an infinite mass a.s. at one point.  So its support does not belongs to $\mathcal{N(S)}$. So the positive/null recurrent Taboo MC and the null recurrent Potential MC have stationary distributions. The question that arises here is about the uniqueness of this stationary distribution, and consequently the irreducibility of these measure-valued MCs. 
The following example shows that there is an invariant measure for the Taboo MC which is not the Taboo PP. 
\begin{example}
\label{9-4}
Consider the Renewal MC in Definition \ref{5-0-1}. Consider the Taboo MC of this MC.
This measure-valued MC is called the \textbf{ Renewal Taboo MC}, denoted by $\{\Phi_{n}^{T,R}\}$. 
In both cases where the Renewal MC is positive recurrent and null recurrent, the Taboo PP is a
stationary distribution of this Renewal Taboo MC. Moreover, any finite measure is in the domain
of attraction of this distribution. \\
First, consider the null recurrent case. Denote the Taboo PP by $\tau _{Ren}$.
Consider the following measure on $\mathbb{N}$:
\begin{equation}
\label{9-7}
\Phi_{0}^{T,R} = \sum _{k\in \mathbb{N}} 1_{k}.
\end{equation}
Select this as the initial state of the MC $\{\Phi_{n}^{T,R}\}$. Then 
\begin{equation}
\label{9-5}
\lim_{n \to \infty} \Phi_{n}^{T,R} = \tau _{Ren} + \sum _{k\in \mathbb{N}} 1_{k}. 
\end{equation}
%The reason why this equality holds is that the mass 1, that there is in the initial state $\Phi_{0}^{T,R} $, for large $k$, puts an extra mass at every point of the limiting distribution, due to the law of the Renewal MC. \\
The same equality holds for the positive recurrent case, when the Renewal Taboo MC
starts with the same initial state $\Phi_{0}^{T,R} $. 
\end{example}
For the null recurrent Potential MC, with the same proof as in Example \ref{9-4},
one can show that considering the null recurrent Potential MC constructed by the null Renewal MC
and starting from the \eqref{9-7} measure, the limit distribution is different from the Potential PP.
So with the same argument, in the Renewal example, the null recurrent Potential MC does not have unique
limiting distribution. So the positive recurrent and null recurrent Taboo MC and the null recurrent
Potential MC cannot be irreducible. \\
\begin{remark} Note that $\mathcal{N}(S)$ is a topological (Polish) space \cite{Kall} that is not countable.
So the concepts and notation of MCs on topological state spaces in \cite{Meyn}
have to be used for considering the properties of these measure-valued MCs.
For example the classical definition of irreducibility has to be replaced by $\psi-irreducibility$.
This definition is with respect to a measure $\psi$ on the state space of the Markov Chain, namely $\mathcal{M(S)}$.  
\end{remark} 
%\section{Open Questions}
%\label{Open}
%Here are some open questions concerning Renewal EFTs (EFFs).  
%The first one is about the \textbf{unimodular dimensions} of the foils and 
%of the orbits in the sense of \cite{khezeli1}. 
%A second question is whether the Renewal EFT seen as a unimodular network and its foils are
%\textbf{ergodic}. 
%A third question is about the \textbf{distinguishability of the foils}. 
\section{Appendix}
\subsection{Proof of Proposition \ref{T1}}
\label{Proof}
Before going through the proof of this proposition, first Definition \ref{D02} and Lemma \ref{L01} , borrowed from \cite{Kemperman1973}, are discussed. This last lemma gives the main idea of the proof of Proposition \ref{T1}. 
\begin{definition}
\label{D02}
Let $\mu,\nu$, and $\gamma$ be given probability measures on $\mathbb{Z}$. Consider the Markov Chain $\{Y_{n}\}$ with values in $\mathbb{Z}$ such that $Y_{0}=y$ and it has following transition probabilities:
\begin{equation}
\label{5-0}
P(Y_{n+1}=k|Y_{n}=j)=
\begin{cases}
\mu(k-j) & \text{if  $j<0$} \\
\nu(k-j) & \text{if  $j>0$ } \\
\gamma =\alpha \mu(k) + \beta \nu(k)  & \text{if $ j=0$ }, \\
\end{cases}
\end{equation}
where $\alpha,\beta \geq 0 $ and $\alpha + \beta = 1$. This Markov Chain is an ordinary
random walk on $\mathbb{Z}$ with jump distribution $\mu$ in the positive integers,
distribution $\nu$ in the negative integers, and $\gamma$ at $0$. This random walk will
be referred to as the \textbf{oscillating random walk} on $\mathbb{Z}$. \\
The particular case where $\nu (i) = \mu (-i)$ is called the
\textbf{anti-symmetric oscillating random walk }. If moreover $\mu(j)=0$ for $j<0$,
then it is called the \textbf{one-sided anti-symmetric }case. 
\end{definition}
The following lemma from \cite{Kemperman1973} will be used to study the recurrence and
transience property of the oscillating random walk.
\begin{lemma}
\label{L01}
Consider the one-sided antisymmetric oscillating random walk $\{Y_{n}\}$ on $\mathbb{Z}$.
Then a sufficient condition for zero to be recurrent is that 
\begin{equation}
\label{5-2}
\sum_{j=n}^{\infty} \mu (j)=O(n^{-\frac{1}{2}}) \quad \text{as $n \to \infty$.}
\end{equation}
A sufficient condition for zero to be transient is 
\begin{equation}
\label{5-3}
\mu (n) \sim cn^{-1-\epsilon} \quad \text{as $n \to \infty$,}
\end{equation}
where $c$ and $\epsilon$ denote positive constants, $\epsilon < \frac{1}{2}$.
\end{lemma}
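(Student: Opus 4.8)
The plan is to reduce the recurrence/transience question for the one-sided antisymmetric oscillating walk of Definition~\ref{D02} to a recurrence/transience question for an auxiliary \emph{magnitude chain} observed at the instants where $\{Y_n\}$ changes sign, and then to read off the threshold $\alpha=\tfrac12$ from a drift computation on that chain. Concretely, since $\mu$ is supported on the positive integers and $\nu(i)=\mu(-i)$ on the negative integers, each excursion of the walk below zero is an increasing renewal walk with step law $\mu$, and each excursion above zero is its reflection. Let $\tau_0<\tau_1<\cdots$ be the successive sign-change times and set $M_k=|Y_{\tau_k}|$. The one-sided antisymmetric structure makes $\{M_k\}$ a Markov chain on $\mathbb{N}$ whose transition out of a state $a$ is exactly the law of the \emph{overshoot} $S_N-a$, with $N=\inf\{n:S_n>a\}$, of a $\mu$-renewal walk $S_n=X_1+\cdots+X_n$ over the level $a$. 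The first step is to verify, by a standard fluctuation-theoretic argument, that $\{Y_n\}$ returns to $0$ infinitely often if and only if $\{M_k\}$ returns to a bounded set infinitely often; the aperiodicity hypothesis guarantees that from any bounded magnitude the walk reaches $0$ with positive probability in bounded time, which gives the ``if'' direction, while the renewal theorem (the mean step being infinite) controls the vanishing chance of an exact hit of $0$ during an excursion of large magnitude, which underlies the ``only if'' direction.

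The second step analyses the multiplicative increments $M_{k+1}/M_k$ through the asymptotics of the renewal overshoot over a high level. Under the regularly varying hypothesis $\mu(n)\sim c\,n^{-1-\epsilon}$ (so that $\mathbb{P}(X>n)\sim c'n^{-\epsilon}$ with tail index $\alpha=\epsilon\in(0,\tfrac12)$), the Dynkin--Lamperti theorem gives $M_{k+1}/a\Rightarrow R_\alpha$ as $a\to\infty$, where $R_\alpha$ has the overshoot-ratio density $\tfrac{\sin\pi\alpha}{\pi}\,x^{-\alpha}(1+x)^{-1}$ on $(0,\infty)$. Differentiating the classical identity $\int_0^\infty x^{-\alpha}(1+x)^{-1}\,dx=\pi/\sin\pi\alpha$ in $\alpha$ then yields the key computation
\begin{equation}
\mathbb{E}[\log R_\alpha]=\pi\cot(\pi\alpha),
\end{equation}
which is positive for $\alpha<\tfrac12$, vanishes at $\alpha=\tfrac12$, and is negative for $\alpha>\tfrac12$. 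Thus $\log M_k$ behaves like a walk whose asymptotic drift is $\pi\cot\pi\alpha$, and the sign change at $\alpha=\tfrac12$ is precisely the threshold in the statement.

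The third step converts these drift signs into recurrence/transience verdicts via a Foster--Lyapunov analysis of $V(a)=\log a$ on $\{a\ge a_0\}$. For transience under $\mu(n)\sim c\,n^{-1-\epsilon}$ with $\epsilon<\tfrac12$, the strictly positive drift forces $\log M_k\to+\infty$, hence $M_k\to\infty$, so $\{M_k\}$ visits bounded sets only finitely often and $\{Y_n\}$ is transient. For recurrence under the weaker tail bound $\sum_{j\ge n}\mu(j)=O(n^{-1/2})$, I would not invoke the exact limit law but use this bound directly to control the upper tail of the overshoot and establish the drift inequality $\mathbb{E}[\log M_{k+1}-\log M_k\mid M_k=a]\le 0$ for all large $a$; Foster's criterion then gives recurrence of $\{M_k\}$ to a bounded set, and hence of $\{Y_n\}$ at $0$.

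The main obstacle I anticipate lives in the third step and is twofold. First, the recurrence half is delicate precisely because the hypothesis is only an $O(n^{-1/2})$ tail bound rather than exact regular variation with index $\tfrac12$: the overshoot law need not converge, so the nonpositivity of the $\log$-drift must be obtained from a robust inequality on $\mathbb{E}[\log(\text{overshoot}/a)]$ derived from the tail bound itself, not from the value $\pi\cot(\pi\alpha)$, and one must simultaneously rule out a positive escape rate. Second, the increments $\log(M_{k+1}/M_k)$ are not i.i.d.\ but only converge in law as the state grows, so the Lyapunov arguments must be stated for a genuinely state-dependent chain, which is also where the reduction of the first step needs the most care near the boundary. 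Finally, the borderline case $\alpha=\tfrac12$ is exactly the gap between the two sufficient conditions and is not decided by this drift argument, consistent with the lemma asserting only sufficient, not necessary and sufficient, conditions.
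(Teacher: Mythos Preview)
The paper does not prove this lemma at all: it is quoted verbatim from Kemperman (1973), as the sentence introducing it makes explicit (``The following lemma from \cite{Kemperman1973}\ldots''). So there is no ``paper's own proof'' to compare your proposal against; Kemperman's original argument proceeds via generating-function and potential-theoretic estimates for the ladder-height distribution rather than the dynamical-systems route you sketch.

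As for your sketch on its own merits: the reduction to the overshoot chain $\{M_k\}$ and the Dynkin--Lamperti computation $\mathbb{E}[\log R_\alpha]=\pi\cot\pi\alpha$ are correct and make the transience half convincing, since a strictly positive asymptotic drift for $\log M_k$ does force escape. The recurrence half, however, has a real gap exactly where you flag the ``main obstacle'': under the bare hypothesis $\sum_{j\ge n}\mu(j)=O(n^{-1/2})$ you only claim a nonpositive $\log$-drift, but Foster's criterion requires a \emph{strictly} negative drift outside a finite set, and in the borderline (zero-drift) regime one needs a second-moment or Lamperti-type condition that you have not supplied. Since the $O(n^{-1/2})$ assumption includes genuinely critical tails (and even tails that are not regularly varying), the inequality $\mathbb{E}[\log M_{k+1}-\log M_k\mid M_k=a]\le 0$ is both unproved and, even if true, insufficient for recurrence. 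To close this you would need either a sharper control showing the drift is bounded away from zero, or an additional variance bound on $\log(M_{k+1}/M_k)$ together with a Lamperti-style recurrence criterion---neither of which follows from the stated tail bound alone. This is presumably why Kemperman's original proof does not go through a Lyapunov argument.
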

\begin{proof} [Proof of Proposition \ref{T1}]
Consider two independent i.i.d. random sequences 
\begin{align*}
\{X_{i}\}_{i \in \mathbb{N}^{*}}, \& \quad \{X_{i}\} \sim \eta  \\
\{X'_{i}\}_{i \in \mathbb{N}^{*}}, \& \quad \{X'_{i}\} \sim \eta   
 \end{align*}
and two random walks on $\mathbb{Z}$ with jumps $\{X_{i}\}$ and  $\{X'_{i}\}$ respectively,
with two arbitrary different starting points $X_{0}$ and $X'_{0}$, namely,
\begin{equation}
\label{4-3-1}
S^{l}=\sum_{i=0}^{l} \{X_{i}\} \quad \quad  S'^{l}=\sum_{i=0}^{l} \{X'_{i}\} . 
\end{equation}
The paths created by these two random walks are two paths in $G^{\eta}$ starting from the
two vertices $X_{0}$ and $X'_{0}$. For checking the connectedness of $G^{\eta}$,
it is needed to check whether these two random walks meet each other in finite time a.s. or not. 
To this end, the MC $\{Z_{n}\}_{n \in \mathbb{N}}$ will be considered.
For $X_{0}<X'_{0}$ define $Z_{0} = X'_{0}-X_{0}$. Moreover fix the $S'^{l}$ at $X'_{0}$
and define $Z_{i}$, the difference between $X'_{0}$ and $X_{i}$ up to the time that $X_{i}$ passes $X'_{0}$,
i.e., 
\begin{equation*}
Z_{i}=X'_{0}-X_{i}, \quad \text{ for $0< i \leq t_{1}$},
\end{equation*}
where $t_{1}$ is the first $t$ such that $X_{t}>X'_{0}$.
Then fix $S^{l}$ at $X_{t_{1}}$ and look at the next steps of $S'^{l}$.
For $i>t_{1}$ define $Z_{i}$, the difference between $X_{t_{1}}$ and $X'_{i}$ up
to the time that $X'_{i}$ passes $X_{t_{1}}$, i.e.,
\begin{equation*}
Z_{i}=X'_{i}-X_{t_{1}},  \quad  \text{for  $t_{1} < i \leq t_{2}$},
\end{equation*}
where $t_{2}$ is the first time where $X'_{t}>X_{t_{1}}$.
After that, fix $X'_{t_{2}}$ and continue this process.
With this definition, $\{Z_{n}\}_{n \in \mathbb{N}}$ is a random walk on $\mathbb{Z}$
which has following transition probability
\begin{equation}
\label{5-1}
P(Z_{n+1}=k|Z_{n}=j)=
\begin{cases}
q_{k-j}& \text{if  $j<0$ and $k>j$ } \\
q_{j-k} & \text{if  $j\geq 0$ and $k<j$ } \\
0  & \text{otherwise }, \\
\end{cases}
\end{equation}
where $\{q_{i}\}$ is the probability defined in \eqref{4-0}. Our question about the meeting
of the two random walks $S^{l}$ and $S'^{l}$ reduces to understanding whether the state $0$
in $\{Z_{n}\}_{n \in \mathbb{N}}$ is recurrent or not. But $\{Z_{n}\}_{n \in \mathbb{N}}$ is
a one sided antisymmetric oscillating random walk where $\mu(j) $  in \eqref{5-0} is equal
to $q_{j}$ and $\beta=1$. So
\begin{equation*}
\sum_{j=n}^{\infty}q_{j}=\sum_{j=n}^{\infty}\frac{c_{1}}{j^{\alpha +1}},
\end{equation*}
which is $O(n^{-\frac{1}{2}})$, as $n \to \infty$, when $\alpha \geq \frac{1}{2}$. 
So using Lemma \ref{L01}, one can conclude that $\{Z_{n}\}$ is recurrent
when $\alpha \geq \frac{1}{2}$, and it is transient when $\alpha < \frac{1}{2}$.
So the two random walks $\{S^{l}\}$ and $\{S'^{l}\}$ will meet each other a.s. when 
$\frac{1}{2} \leq \alpha <1$ and, in this case, $G^{\eta}$ is a Renewal EFT.
Correspondingly, when $\alpha < \frac{1}{2}$, $G^{\xi}$ is a Renewal EFF.
\end{proof}

\section*{Acknowledgements}

This work was supported by the ERC NEMO grant, under the European Union's Horizon 2020 research and innovation programme,
grant agreement number 788851 to INRIA.\\
The authors would like to thank Ali Khezeli and Hermann Thorisson for their comments and suggestions.

\bibliographystyle{imsart-number}
\bibliographystyle{unsrt} 
\bibliography{references}

\begin{thebibliography}{15}
% BibTex style file: imsart-number.bst, 2017-11-03
% Default style options (sort=1,type=number).
% Used options (sort=1,type=number).

\bibitem{Aldous}
\begin{barticle}[author]
\bauthor{\bsnm{Aldous},~\bfnm{D.}\binits{D.}} \AND
  \bauthor{\bsnm{Lyons.},~\bfnm{R.}\binits{R.}}
(\byear{2007}).
\btitle{Processes on Unimodular Random Networks}.
\bjournal{Electronic Journal of Probability}
\bvolume{12}
\bpages{1454--1508}.
\end{barticle}
\endbibitem

\bibitem{Baccelli2017}
\begin{barticle}[author]
\bauthor{\bsnm{Baccelli},~\bfnm{F.}\binits{F.}},
  \bauthor{\bsnm{Haji-Mirsadeghi},~\bfnm{M.~O.}\binits{M.~O.}} \AND
  \bauthor{\bsnm{Khezeli},~\bfnm{A.}\binits{A.}}
(\byear{2018}).
\btitle{Eternal Family Trees and Dynamics on Unimodular Random Graphs}.
\bjournal{Contemporary Mathematics}
\bpages{85-127}.
\end{barticle}
\endbibitem

\bibitem{Baccelli2018}
\begin{barticle}[author]
\bauthor{\bsnm{Baccelli},~\bfnm{F.}\binits{F.}},
  \bauthor{\bsnm{Haji-Mirsadeghi},~\bfnm{M.~O.}\binits{M.~O.}} \AND
  \bauthor{\bsnm{Murphy},~\bfnm{J.~T.}\binits{J.~T.}}
(\byear{2019}).
\btitle{Doeblin Trees}.
\bjournal{Electronic Journal of Probability}
\bvolume{24}
\bpages{1 -- 36}.
\end{barticle}
\endbibitem

\bibitem{EX}
\begin{barticle}[author]
\bauthor{\bsnm{Bentley},~\bfnm{J.}\binits{J.}} \AND
  \bauthor{\bsnm{Yao},~\bfnm{A.~Chi-Chih}\binits{A.~C.-C.}}
(\byear{1976}).
\btitle{An almost optimal algorithm for unbounded searching}.
\bjournal{Information Processing Letters}
\bvolume{5}
\bpages{82--87}.
\end{barticle}
\endbibitem

\bibitem{Pierre}
\begin{bbook}[author]
\bauthor{\bsnm{Br{\'e}maud},~\bfnm{P.}\binits{P.}}
(\byear{1999}).
\btitle{Markov Chains, Gibbs Fields, Monte Carlo Simulation, and Queues}.
\bpublisher{Springer, New York, NY}.
\end{bbook}
\endbibitem

\bibitem{Monotone}
\begin{barticle}[author]
\bauthor{\bsnm{Daley},~\bfnm{D.}\binits{D.}}
(\byear{December 1, 1968}).
\btitle{Stochastically monotone {M}arkov Chains}.
\bjournal{Zeitschrift f{\"u}r Wahrscheinlichkeitstheorie und Verwandte Gebiete}
\bvolume{10}
\bpages{305-317}.
\end{barticle}
\endbibitem

\bibitem{Paolo}
\begin{barticle}[author]
\bauthor{\bsnm{Fill},~\bfnm{J.~A.}\binits{J.~A.}} \AND
  \bauthor{\bsnm{Machida},~\bfnm{M.}\binits{M.}}
(\byear{2001}).
\btitle{Stochastic Monotonicity and Realizable Monotonicity}.
\bjournal{The Annals of Probability}
\bvolume{29}
\bpages{938--978}.
\end{barticle}
\endbibitem

\bibitem{Tom}
\begin{barticle}[author]
\bauthor{\bsnm{Hutchcroft},~\bfnm{T.}\binits{T.}}
(\byear{2020}).
\btitle{Non-intersection of transient branching random walks}.
\bjournal{Probability Theory and Related Fields}
\bvolume{178}.
\end{barticle}
\endbibitem

\bibitem{Kall}
\begin{bbook}[author]
\bauthor{\bsnm{Kallenberg},~\bfnm{O.}\binits{O.}}
(\byear{2017}).
\btitle{Random Measures, Theory and Applications}
\bvolume{77}.
\bpublisher{Springer}.
\end{bbook}
\endbibitem

\bibitem{Kemperman1973}
\begin{barticle}[author]
\bauthor{\bsnm{Kemperman},~\bfnm{J.~H.~B.}\binits{J.~H.~B.}}
(\byear{1974}).
\btitle{The Oscillating Random Walk}.
\bjournal{Stochastic Processes and their Applications , Elsevier}
\bvolume{2}
\bpages{1--29}.
\end{barticle}
\endbibitem

\bibitem{ALI}
\begin{barticle}[author]
\bauthor{\bsnm{Khezeli},~\bfnm{A.}\binits{A.}}
(\byear{2018}).
\btitle{SHIFT-COUPLING OF RANDOM ROOTED GRAPHS AND NETWORKS}.
\bjournal{Contemporary Mathematics}
\bvolume{719}
\bpages{175-211}.
\end{barticle}
\endbibitem

\bibitem{loynes_1962}
\begin{barticle}[author]
\bauthor{\bsnm{Loynes},~\bfnm{R.~M.}\binits{R.~M.}}
(\byear{1962}).
\btitle{The stability of a queue with non-independent inter-arrival and service
  times}.
\bjournal{Mathematical Proceedings of the Cambridge Philosophical Society}
\bvolume{58}
\bpages{497--520}.
\bdoi{10.1017/S0305004100036781}
\end{barticle}
\endbibitem

\bibitem{Meyn}
\begin{bbook}[author]
\bauthor{\bsnm{Meyn},~\bfnm{S.~P.}\binits{S.~P.}} \AND
  \bauthor{\bsnm{Tweedie},~\bfnm{R.~L.}\binits{R.~L.}}
(\byear{2005}).
\btitle{Markov Chains and Stochastic Stability}.
\bpublisher{Springer}.
\end{bbook}
\endbibitem

\bibitem{Prop}
\begin{barticle}[author]
\bauthor{\bsnm{Propp},~\bfnm{J.}\binits{J.}} \AND
  \bauthor{\bsnm{Wilson},~\bfnm{D.}\binits{D.}}
(\byear{1996}).
\btitle{Exact sampling with coupled {M}arkov chains and applications to
  statistical mechanics}.
\bjournal{Random Structures and Algorithms}
\bvolume{9}
\bpages{223-252}.
\end{barticle}
\endbibitem

\bibitem{Stoyan}
\begin{bbook}[author]
\bauthor{\bsnm{Sung Nok~Chiu},~\bfnm{Dietrich~Stoyan}\binits{D.~S.}}
(\byear{2013}).
\btitle{Stochastic Geometry and Its Applications},
\bedition{third} ed.
\bpublisher{John Wiley, and Sons, New York}.
\end{bbook}
\endbibitem

\end{thebibliography}
\renewcommand{\bibname}{References}

\end{document}